\newtheorem{theorem}{Theorem}[section]{\bf}{\it}
\newtheorem{lemma}[theorem]{Lemma}{\bf}{\it}
\newtheorem{proposition}[theorem]{Proposition}{\bf}{\it}
\newtheorem{corollary}[theorem]{Corollary}{\bf}{\it}
\newtheorem{example}[theorem]{Example}{\bf}{\it}
{\bf}{\it} 
{\bf}{\it}
\newtheorem{defn}[theorem]{Definition}
\theoremstyle{remark}
\newtheorem{remark}[theorem]{Remark}
\numberwithin{equation}{section}
\newcommand{\R}{\mathbb R}
\newcommand{\N}{\mathbb N}
\newcommand{\capa}{\operatorname{cap}}
\newcommand{\spt}{\operatorname{spt}}
\newcommand{\set}{\operatorname{set}}
\newcommand{\dist}{{\operatorname{dist}\,}}
\newcommand{\fillvol}{{\operatorname{Fillvol}}}
\newcommand{\length}{\ell}
\newdimen\vintkern\vintkern11pt
\def\vint{-\kern-\vintkern\int}
\newcommand{\norm}[1]{\lVert #1 \rVert}
\newcommand{\md}{\operatorname{md}}
\newcommand{\haus}{\mathcal{H}}
\newcommand{\Z}{\mathbb{Z}}
\newcommand{\ptLip}{\operatorname{Lip}} 
\newcommand{\ptlowlip}{\operatorname{lip}}
\newcommand{\Lipconst}{\operatorname{Lip}} 
\newcommand{\Lipspace}{\operatorname{Lip}} 
\newcommand{\AKc}{{\mathbf M}}    
\newcommand{\AKnc}{{\mathbf N}}   
\newcommand{\AKirc}{{\mathcal I}} 
\newcommand{\AKic}{{\mathbf I}}   
\newcommand{\mass}{{\mathbf M}} 
\newcommand{\nmass}{{\mathbf N}} 
\newcommand{\bdry}{{\partial}} 
\newcommand{\flatnormSCR}{{\mathscr F}} 
\newcommand{\flatnormFAT}{{\mathbf F}} 
\newcommand{\rstr}{\:\mbox{\rule{0.1ex}{1.2ex}\rule{1.1ex}{0.1ex}}\:} 
\newcommand{\lm}{{\mathscr L}} 
\begin{document}

\title{An upper gradient approach to weakly differentiable cochains}
\date{}
\author{Kai Rajala \and Stefan Wenger}

\thanks{K.R. was supported by the Academy of Finland. Parts of this research were carried out when K.R. was visiting University of Illinois at Chicago and University of Michigan. He thanks the departments for their hospitality. S.W. was partially supported by NSF grants DMS--1056263 and DMS--0956374; parts of this research were carried out while S.W. was an Assistant Professor at the University of Illinois at Chicago. He would like to thank the department for the excellent research environment. He would moreover like thank the University of Jyv\"askyl\"a for its hospitality during a visit when parts of this research were carried out.}  
\subjclass[2010]{49Q15, 46E35, 53C65, 49J52, 30L99}

\begin{abstract}
The aim of the present paper is to define a notion of weakly differentiable cochain in the generality of metric measure spaces and to study basic properties of such cochains. Our cochains are (sub-)linear functionals on a subspace of chains, and a suitable notion of chains in metric spaces is given by Ambrosio-Kirchheim's theory of metric currents. The notion of weak differentiability we introduce is in analogy with Heinonen-Koskela's concept of upper gradients of functions. 
In one of the main results of our paper, we prove continuity estimates for cochains with $p$-integrable upper gradient in $n$-dimensional Lie groups endowed with a left-invariant Finsler metric. Our result generalizes the well-known Morrey-Sobolev inequality for Sobolev functions. Finally, we prove several results relating capacity and modulus to Hausdorff dimension.

\end{abstract}

\maketitle


\section{Introduction}

\subsection{Background}
One of the main principles in the theory of Sobolev functions in euclidean spaces is that good integrability properties of the weak differential of a function implies good behavior for the function itself. For instance, Sobolev inequalities bound the values of the function in terms of the integral of the gradient. In particular, the Morrey-Sobolev inequality shows that a weakly differentiable function $u \in L^1_{\operatorname{loc}}(\R^n)$ with $|\nabla u| \in L^p(\R^n)$ has a H\"older continuous representative when $p>n$, 
\begin{equation}
\label{morrey}
|u(x)-u(y)| \leq C(n,p)|x-y|^{1-n/p}\norm{\nabla u}_p. 
\end{equation}
An appealing question is whether continuity results like this also hold in the case of differential forms. Namely, given an $m$-form $\omega$, we can view it as a linear functional defined on a class of $m$-dimensional chains (smooth submanifolds, polyhedral chains, currents, etc.). We can now ask for conditions on the coefficients of $\omega$ which guarantee continuity of this functional with respect to a suitable metric. An important condition like this is given by Whitney's theory of flat forms. By definition, these are the $m$-forms $\omega$ whose coefficients, as well as the coefficients of the distributional exterior derivative, are essentially bounded. By Wolfe's theorem \cite[p. (viii)]{Whitney}, \cite[Theorem 5.5]{Heinonen}, the  space of flat forms is isomorphic to the space of flat cochains. These are bounded linear functionals on the space of flat chains, the completion of polyhedral $m$-chains with respect to the flat norm 
$$
 \flatnormFAT(T):= \inf\{ \mass(R)+\mass(V):\, T=R+\bdry V  \}. 
$$
It follows that integration of a flat form $\omega$ over any flat chain is well-defined although the coefficients of $\omega$ are initially only defined pointwise almost everywhere. Moreover, it follows that flat forms, when viewed as cochains, are Lipschitz continuous with respect to the flat norm. We note that the theory of flat forms has recently been extended to Banach spaces in \cite{Snipes}. 

Recently, a theory of Sobolev spaces in metric measure spaces $(X,d,\mu)$ has been developed based on \emph{upper gradients}, see \cite{HeiKos}, \cite{Shan}, \cite{HKST}, and the forthcoming monograph \cite{HKSTbook}.
By definition, a non-negative Borel function $\rho$ is an upper gradient of a function $u:X \to \overline{\R}$ if 
$$
|u(y)-u(x)| \leq \int_{\gamma} \rho \, ds 
$$
for every $x$ and $y \in X$ and every rectifiable path $\gamma$ in $X$ with endpoints $x$ and $y$. We say that $u \in L^p(X,\mu)$ belongs to the \emph{Newtonian} (Sobolev) space $N^{1,p}(X,\mu)$ if $u$ has an upper gradient $\rho \in L^p(X,\mu)$. This approach works in general spaces, even when directional derivatives cannot be defined. It also gives a useful viewpoint in smooth spaces, where the Newtonian spaces coincide with classical Sobolev spaces. The theory includes several generalizations of the Sobolev inequalities, as well as the continuity estimate \eqref{morrey}, under mild assumptions on the underlying metric measure space, cf. \cite{Hajlasz-Koskela} and the references therein. 

The aim of the present paper is to generalize the results discussed above. Namely, we address the following problems: 
\begin{itemize}
\item[(i)] give a proper notion for weakly differentiable $m$-forms in metric measure spaces using the upper gradient approach, and prove useful properties for them, in particular 
\item[(ii)] find $L^p$-conditions which imply continuity.  
\end{itemize}
Problem (ii) is interesting already in euclidean spaces. Our main results give continuity estimates with respect to the flat norm and the so-called filling volume in euclidean spaces and Lie groups; we will discuss these results shortly. 

We now turn to Problem (i). As discussed above, differential forms induce linear functionals defined on $m$-dimensional chains. Such functionals can be defined without assuming any structure from the underlying space. Therefore, we would like to define cochains $\omega: \mathcal{C} \to \overline{\R}$, where $\mathcal{C}$ is a suitable family of $m$-dimensional chains, and try to develop their properties. A question that immediately comes up in this approach is how to find a suitable notion of $m$-chains. Such a notion in the generality of complete metric spaces is provided by Ambrosio-Kirchheim's theory of metric currents developed in \cite{Ambrosio-Kirchheim-currents} which we next discuss. 

\subsection{Metric currents}

We recall that a Federer-Fleming $m$-current in $\R^n$ is a continuous linear functional on the space of compactly supported smooth differential $m$-forms. 
In the generality of a complete metric space $X$ a suitable substitute for $m$-forms is given by $(m+1)$-tuples $(f,\pi_1,\dots, \pi_m)$ of Lipschitz functions on $X$ with $f$  bounded. A metric $m$-current in the sense of Ambrosio-Kirchheim \cite{Ambrosio-Kirchheim-currents} is then a multi-linear functional on such tuples which satisfies a continuity, locality and finite mass property. We refer to Section~\ref{sec:metriccurrents} below for definitions. The space of metric $m$-currents in $X$ is denoted by $\AKc_m(X)$. Metric currents have finite mass by definition and the mass as a measure of $T\in\AKc_m(X)$ is denoted by $\|T\|$; furthermore $\mass(T):= \|T\|(X)$. The boundary of an element $T\in\AKc_m(X)$ with $m\geq 1$ is denoted by $\bdry T$. A metric $m$-current $T$ whose boundary $\bdry T$ has finite mass is called normal current; and the space of such $T$ is denoted by $\AKnc_m(X)$. 
One of the guiding principles is that in Euclidean space a tuple $(f,\pi_1, \dots, \pi_m)$ with $f$ and $\pi_i$ smooth should correspond to the differential form $fd\pi_1\wedge\dots\wedge d\pi_m$ and tuples $(f,\pi_1,\dots,\pi_m)$ may thus be regarded as generalized differential forms. 
An important subclass of normal $m$-currents is given by the additive subgroup $\AKic_m(X)\subset\AKnc_m(X)$ of integral $m$-currents. These are normal currents which roughly correspond to (integration of the generalized forms over) countably $\haus^m$-rectifiable sets with orientation and integer multiplicities. In particular, $0$-dimensional integral currents correspond to points with integer weights. Moreover, Lipschitz curves give rise to $1$-dimensional integral currents; and in fact a weak converse of this is true as well, see Lemma~\ref{lem:weak-structure-1-currents} and \cite[Lemma 4.4]{Ambrosio-Wenger}. 

\subsection{Weakly differentiable cochains}
\label{section:introforms}
We now turn to the main object of study of the present paper, namely $m$-cochains. For this let $\mathcal{C}_m$ be an additive subgroup of $\AKc_m(X)$. We call cochain on $\mathcal{C}_m$ a function $\omega:\mathcal{C}_m\to\overline{\R}$ which satisfies $\omega(0)=0$ and which is sublinear in the sense that $$|\omega(T)|\leq |\omega(T+S)| + |\omega(S)|$$ for all $T,S\in\mathcal{C}_m$. If $\omega$ furthermore satisfies $\omega(T+S) = \omega(T) + \omega(S)$ for all $T, S$ for which each term is finite, then $\omega$ will be called a linear cochain. Clearly, every generalized $m$-form $(f,\pi_1,\dots,\pi_m)$ gives rise to a linear cochain on $\AKc_m(X)$ by $\omega(T)=T(f,\pi_1,\dots,\pi_m)$. Moreover, every function $u:X\to\R$, even if not Lipschitz, gives rise to a cochain on $\AKic_0(X)$. More examples will be given later. 

We can define the notion of upper gradient of a cochain in analogy with the definition of upper gradient of a function. For this, let $\mathcal{C}_{m+1}\subset\AKc_{m+1}(X)$ and let $\omega$ be a cochain on $\mathcal{C}_m$. We call a Borel function $g:X\to[0,\infty]$ an upper gradient of $\omega$ with respect to $\mathcal{C}_{m+1}$ if 
\begin{equation*}
 |\omega(T)|\leq \int_Xg d\|S\|
\end{equation*}
for all $T\in\mathcal{C}_m$ and $S\in\mathcal{C}_{m+1}$ satisfying $\bdry S = T$. This definition of upper gradient may be viewed as a generalization of the notion of upper gradient of a function. Indeed, we will show in Proposition~\ref{prop:upper-grad-lipfct-0-cochain} that a Borel function $g$ is an upper gradient of a function $u:X\to\overline{\R}$ if and only if $g$ is an upper gradient of the cochain on $\AKic_0(X)$ induced by $u$. We will moreover show that if $m\geq 0$ and if $(f,\pi_1,\dots, \pi_m)$ is a generalized differential form then an upper gradient of the $m$-cochain on $\AKic_m(X)$ induced by $(f,\pi_1,\dots,\pi_m)$ is given by the product 
\begin{equation}
\label{ugex}
g(x) = \ptlowlip f(x)\prod_{i=1}^m\ptlowlip \pi_i(x) 
\end{equation} 
of pointwise lower Lipschitz constants, see Proposition~\ref{prop:upper-grad-norm-lowlip}. This is a generalization for cochains of the fact, proved by Cheeger in \cite{Cheeger-diff}, that if $f$ is a Lipschitz function on $X$ then the pointwise lower Lipschitz constant $\ptlowlip f(\cdot)$ is an upper gradient of $f$. In Proposition~\ref{prop:upper-grad-norm-Lipform-cochain} we establish an analogous result for cochains on $\AKc_m(X)$.

Similarly, we can define an upper norm of a cochain $\omega$ on $\mathcal{C}_m$. We call a Borel function $h:X \to [0,\infty]$ an upper norm of $\omega$ if 
$$
|\omega(T)| \leq \int_{X} h \, d\norm{T} 
$$
for all $T \in \mathcal{C}_m$. For example, the function 
$$
h(x) = |f(x)|\prod_{i=1}^m\ptlowlip \pi_i(x) 
$$
is an upper norm of the cochain described before \eqref{ugex}. We will give more examples of upper norms and upper gradients later. 

We have now given the necessary definitions that allow us to talk about weakly differentiable cochains in metric measure spaces; they are the cochains with integrable upper gradients and/or integrable upper norms. Our purpose is to show that analytic properties for the cochains can be deduced using the properties of their upper gradients and upper norms.

\subsection{Continuity of cochains in Lie groups}

One of the main goals of this paper is to establish continuity estimates with respect to the filling volume for cochains with $p$-integrable upper gradient. For this purpose we denote by $\AKic_m^0(X)$ the subset of elements $T\in\AKic_m(X)$ with $\bdry T=0$. We furthermore recall that the filling volume of an element $T\in\AKic_m^0(X)$ is defined by 
$$
 \fillvol(T):= \inf\{\mass(S): S\in\AKic_{m+1}(X), \bdry S = T\}.
$$
In a slightly simplified setting, one of our main results can be stated as follows.

\begin{theorem} \label{thm:simplified}
Let $G$ be a Lie group, endowed with a left-invariant Riemannian metric, and let $0\leq \alpha \leq m$ and $A\geq 1$. Let $\omega$ be a cochain on $\AKic_m^0(G)$. If $\omega$ has an upper gradient $g$ in $L^p(G)$ for some $p>n-\alpha$ then 
\begin{equation}\label{eq:thm-simpli-Hoelder-eq}
 |\omega(T)| \leq C\, \fillvol(T)^{1- \frac{n}{p+\alpha}} \norm{g}_p
\end{equation}
for every $T\in\AKic_m^0(G)$ which satisfies $\fillvol(T)\leq 1$ and 
\begin{equation}\label{eq:simpli-growth-T}
\norm{T}(B(x,r))\leq Ar^{\alpha} \quad \text{for all }x \in \R^n \text{ and } r>0. 
\end{equation}
Here, $C$ depends only on $\mass(T)$, $n$, $p$, and $\alpha$, $A$, and $G$. 
\end{theorem}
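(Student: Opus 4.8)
The plan is to reduce the statement to the construction of one carefully chosen filling of $T$, and then to estimate the integral of the upper gradient $g$ against its mass measure by a dyadic decomposition over scales.

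\emph{Reduction.} By the very definition of an upper gradient of $\omega$ with respect to $\AKic_{m+1}(G)$, we have $|\omega(T)| \le \int_G g\,d\|S\|$ for \emph{every} $S \in \AKic_{m+1}(G)$ with $\bdry S = T$. Thus it suffices to exhibit one such filling $S$ with $\int_G g\,d\|S\| \le C\,\fillvol(T)^{1-n/(p+\alpha)}\norm{g}_p$. The difficulty is that $g$ is merely a nonnegative Borel function, whereas a near-optimal filling may have a mass measure $\|S\|$ that is singular with respect to the Haar measure $\mathcal L^n$ on $G$ (it is, after all, essentially $(m+1)$-dimensional); for such $S$ the quantity $\int_G g\,d\|S\|$ cannot be controlled by $\norm{g}_p$ at all. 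So the real content is to produce a filling whose mass measure \emph{is} absolutely continuous with respect to $\mathcal L^n$, with quantitative control on its density.

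\emph{Construction of the filling.} Since the metric on $G$ is left-invariant, $G$ carries bi-Lipschitz charts to $\R^n$ with uniform bi-Lipschitz constants (left translates of a single chart), and $G$ admits an isoperimetric, i.e. cone-type, inequality for integral currents with constants depending only on $n$ and $G$. Using these I would decompose a filling of $T$ dyadically over scales $2^{-k}$: cover $\spt T$ by balls of radius $2^{-k}$, fill the scale-$2^{-k}$ increment of $T$ by \emph{averaged} cones (coning each piece not from a single vertex but over a ball of vertices of radius comparable to $2^{-k}$, or, in the same spirit, averaging a filling over a small ball of left translations), so that the resulting current $S_k$ has mass comparable to $2^{-k}$ times the mass of the increment and, crucially, has mass measure $\|S_k\| = \phi_k\,\mathcal L^n$ with $\phi_k$ supported in the $2^{-k}$-neighborhood $N_k$ of $\spt T$. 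The growth hypothesis $\norm{T}(B(x,r)) \le Ar^\alpha$ then bounds both $|N_k|$ and $\norm{\phi_k}_\infty$ (or at least $\norm{\phi_k}_{p'}$) in terms of $2^{-k}$, $\alpha$, $A$, and $\mass(T)$. Setting $S := \sum_{k \ge k_0} S_k$, where $2^{-k_0}$ is the natural scale dictated by $\fillvol(T)$, produces a filling of $T$ (the telescoping of the increments recovers $T$) with $\mass(S)$ comparable to $\fillvol(T)$.

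\emph{Estimate and main obstacle.} For each $k$, H\"older's inequality gives $\int_G g\,d\|S_k\| = \int_{N_k} g\,\phi_k\,d\mathcal L^n \le \norm{g}_p\,\norm{\phi_k}_\infty\,|N_k|^{1/p'}$. Inserting the scale-dependent bounds for $\norm{\phi_k}_\infty$ and $|N_k|$ yields, for each $k$, a bound by $\norm{g}_p$ times a constant depending on $A$, $\mass(T)$, $n$, $p$, $\alpha$, times $2^{-k\delta}$ for some exponent $\delta$; the key point is that $\delta>0$ holds \emph{precisely} when $p>n-\alpha$, which is where that hypothesis is consumed. Summing the resulting geometric series over $k\ge k_0$ and expressing $2^{-k_0}$ through $\fillvol(T)$ (using $\fillvol(T)\le 1$ to dominate the tail) gives \eqref{eq:thm-simpli-Hoelder-eq} after absorbing $\mass(T)$, $n$, $p$, $\alpha$, $A$, and $G$ into $C$. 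I expect the main obstacle to be exactly the construction of the previous paragraph: arranging, simultaneously at all scales, that the filling pieces have $\mathcal L^n$-absolutely continuous mass measures with a density bound governed by the exponent $\alpha$ — the naive cone is far too singular near its vertex, so the averaging must be set up and estimated with care — together with the bookkeeping needed to land on the precise exponent $1-n/(p+\alpha)$ (rather than merely some Hölder exponent) and the transfer of the Euclidean constructions to the Lie group $G$.
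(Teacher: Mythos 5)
Your reduction---``it suffices to exhibit one filling $S\in\AKic_{m+1}(G)$ of $T$ with $\|S\|\ll\mathcal L^n$ and controlled density''---cannot be carried out. By definition an integer rectifiable $(m+1)$-current has mass measure concentrated on a countably $\haus^{m+1}$-rectifiable set; when $m+1<n$ such a set is $\mathcal L^n$-null, so $\|S\|\perp\mathcal L^n$ for every $S\in\AKic_{m+1}(G)$. The ``averaged cone'' you propose would indeed have absolutely continuous mass measure, but it would then be a normal current, not an integral one (an average over a continuum of integral currents loses integer multiplicity), and the hypothesis only gives you $|\omega(T)|\le\int g\,d\|S\|$ for $S\in\AKic_{m+1}(G)$. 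So no admissible filling of the kind your H\"older estimate requires exists, and the plan fails at the reduction step, not merely at the construction.

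The paper escapes this by averaging at the level of \emph{estimates} rather than of \emph{currents}. One writes $|\omega(T)|\le|\,|\omega(T)|-\omega_+(T,r)\,|+\omega_+(T,r)$ with $\omega_+(T,r)$ the average of $|\omega(\varphi_{x\#}T)|$ over right translations $x\in B(e,r)$. For each fixed $x$ the upper gradient inequality is applied to genuine integral fillings---a near-minimal filling $\varphi_{x\#}V$ on the one hand, and the homotopy current $\psi_{x\#}([0,1]\times T)$ built from a controlled family of curves on the other---each of which has singular mass measure. The averaging over $x$ (Lemma~\ref{lemma:fubini-G} and Propositions~\ref{proposition:elem-upper-G}, \ref{proposition:growth-integration-variant}) is what converts these singular pairings $\int g\,d\|S_x\|$ into integrals against $\mathcal L^n$ that $\|g\|_p$ controls, and this is exactly where the growth condition \eqref{eq:simpli-growth-T} and the hypothesis $p>n-\alpha$ enter. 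Your sense that $p>n-\alpha$ is what makes a scale integral converge is correct, but the mechanism is a continuous change of variables over a controlled homotopy followed by a single optimization in $r$, not a dyadic decomposition of $T$ (which, for a general integral cycle, you have not explained how to construct).
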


The precise value of $C$ is given in Theorem~\ref{hoeldercont2}. The requirement that $\fillvol(T)\leq 1$ can be dropped if $G=\R^n$ is Euclidean space. We note that if $\omega$ is a linear cochain and if $T_1,T_2\in\AKic_m^0(G)$ satisfy \eqref{eq:simpli-growth-T} and $$d_{\mathrm{F}}(T_1,T_2):= \fillvol(T_1-T_2)\leq 1$$ then \eqref{eq:thm-simpli-Hoelder-eq} can be written in the more suggestive form 
$$
|\omega(T_1) - \omega(T_2)| \leq C d_{\mathrm{F}}(T_1,T_2)^{1- \frac{n}{p+\alpha}} \norm{g}_p,
$$
and thus $\omega$ is locally H\"older continuous with respect to the metric $d_{\mathrm{F}}$. It should be noted that Theorem~\ref{thm:simplified} fails for $p=n-\alpha$, see Example~\ref{sharpthm}. We do not know, however, whether the H\"older exponent $1-\frac{n}{p+\alpha}$ can be improved and to what extent the growth bound for $T$ is necessary. In Theorem~\ref{thm:simplified} we will assume a growth condition which is somewhat weaker than the one in \eqref{eq:simpli-growth-T}. It is easy to see that Theorem~\ref{thm:simplified} implies the local Morrey-Sobolev inequality for functions in $W^{1,p}(G)$ with $p>n$, see Corollary~\ref{cor:Morrey-Sobolev}.

In Section~\ref{part21} we will also establish a theorem for currents, possibly with boundary, which is similar to Theorem~\ref{thm:simplified} and which gives H\"older continuity with respect to the flat norm rather than the filling volume distance, see Theorem~\ref{hoeldercont1}. This is natural in view of the Lipschitz continuity of flat forms with respect to the flat norm mentioned above. As will be shown, our result actually implies that every flat form with compact support in $\R^n$ gives rise to a cochain which is Lipschitz continuous with respect to the flat norm, and we can thus recover a part of Wolfe's theorem mentioned above. See the paragraph following Theorem~\ref{hoeldercont1} for details.

Similar, but less general results than ours have been previously obtained in \cite{Goldshteinetal}. There it is assumed that $\omega$ belongs to the Sobolev space $W^{q,p}_d(\R^n,\bigwedge^m)$ of $m$-forms in $\R^n$ whose coefficients are $q$-integrable and the coefficients of the weak exterior derivative are $p$-integrable with $p>n-m$ and $q>n-m+1$. It is then proved that, given an oriented $m$-ball $B$ in $\R^n$, the integral of $\omega$ over $B$ is bounded by the corresponding $p$- and $q$-integrals over a suitable domain, the radius of $B$, and the size of the domain. 

\subsection{Sobolev forms and exceptional sets} 
The weakly differentiable cochains defined in Section \ref{section:introforms} are closely connected to the Sobolev spaces $W^{q,p}_d(\R^n,\bigwedge^m)$. See \cite{IwaLuto} for a good reference on Sobolev spaces of differential forms. In Section \ref{exception} we show that, when $1 < p,q<\infty$, every $\omega \in W^{q,p}_d(\R^n,\bigwedge^m)$ induces a linear, weakly differentiable cochain $\tilde{\omega}$, and in fact the norms $|\omega|$ and $|d\omega|$ are an (weak) upper norm and upper gradient of $\tilde{\omega}$, respectively (up to a constant depending on the choice of norms for $\omega$ and $d\omega$). 
We believe that, conversely, linear weakly differentiable cochains probably come from such forms, but we do not pursue this direction in this paper. A result in this spirit has been established in \cite{Goldshteinetal2}. There a version of Wolfe's theorem is proved, showing that there is a one-to-one correspondence between the $W^{q,p}_d(\R^n,\bigwedge^m)$-forms and cochains defined on polyhedral chains who together with their exterior derivatives satisfy certain boundedness conditions with respect to the so-called $q$-mass. 

In the theory of Sobolev functions, capacities are typically used to measure the size of exceptional sets. For instance, the Morrey-Sobolev inequality \eqref{morrey} corresponds to the fact that the $p$-capacity of a single point is positive when $p>n$, and there are weak forms of \eqref{morrey} for smaller $p$ which hold outside a set of zero $p$-capacity. In the theory based on upper gradients, the modulus of path families is an important concept that can be applied in connection with exceptional sets. 

Modulus methods can be extended to much beyond the setting of path families, as already observed by Fuglede \cite{Fuglede-modulus}. In our current setting, the definition is the following. Let $X$ be a complete metric space equipped with a Borel measure $\mu$. Moreover, let $\Gamma \subset \AKc_m(X)$ be a family of currents, and $1 \leq p < \infty$. The $p$-modulus $M_p(\Gamma)$ is the infimum $\int_X f^p \, d\mu$, taken over all non-negative Borel functions $f$ in $X$, such that $\int_X f \, d\norm{T} \geq 1$ for all $T \in \Gamma$. Modulus in the setting of currents implicitly appears in \cite{Williams}, where nonexistence and other results are proved for currents in Carnot groups. 

Similarly, let $\Lambda \subset \AKc_m(X)$ be a family of currents without boundary, and let $\mathcal{C}' \subset \AKc_{m+1}(X)$. Then we can define the $p$-capacity $\operatorname{cap}_p(\Lambda, \mathcal{C}')$ as $M_p(\Gamma)$, where 
$$
\Gamma = \{S \in \mathcal{C}': \bdry S =T \text{ for some } T \in \Lambda \}. 
$$  
In Theorem~\ref{thm:hausdim-capacity-currents}, we relate Hausdorff measure and capacity. Namely, we show that a family of integral currents, all of whose supports lie on a compact set $A \subset X$ with $\haus^{Q-p}(A)< \infty$, has zero $p$-capacity if the underlying measure $\mu$ satisfies $\mu(B(x,r))\leq C r^Q$ for all balls in $X$. In Section \ref{section:rel-haus-cap}, we consider capacity in the setting of Lie groups. We show that if $T$ is a current as in Theorem~\ref{thm:simplified}, then the $p$-capacity of $\{T\}$ is positive if $p>n-\alpha$. This is not surprising in view of Theorem \ref{thm:simplified}. We also give an example to show the above can fail when $p<n-\alpha$; it is not completely clear to us what happens when $p=n-\alpha$. Our results are related to those by Fuglede \cite{Fuglede-modulus}, who gave necessary and sufficient conditions under which the modulus of the family of all Lipschitz surfaces in $\R^n$ intersecting a given set has zero modulus. 


\subsection{Organization of the paper}

This paper is structured as follows. In Section \ref{sec:metriccurrents} we recall the definition of metric currents and some of the basic properties needed later on. In Section \ref{part1} we discuss cochains in general metric spaces, and give some basic examples. First, in \ref{part11} we define cochains, upper gradients and upper norms. We also discuss the modulus and capacity in our context, and the spaces of cochains with integrable upper norms and upper gradients. In \ref{exception} we define Sobolev spaces of linear cochains and show that Euclidean differential forms with integrable distributional exterior derivatives are examples of Sobolev cochains. In \ref{section:estimtes-upper-norm-grad} we give basic examples of upper norms and upper gradients, and compare them to upper gradients of functions in the zero-dimensional case. In \ref{section:rel-haus-cap}, we prove upper bounds for the sizes of exceptional sets. 

In Section \ref{part2} we prove general versions of the continuity estimate, Theorem \ref{thm:simplified}, in Lie groups. To this end, in \ref{part22} we first establish integral estimates corresponding to general measures on Lie groups, and define a ``controlled family of curves'', a condition that allows us to deform currents in a controlled way. In \ref{section:tech-est-cochains} we estimate cochains with integrable upper norms and upper gradients by using translations and minimal fillings, and use the estimates to prove the continuity statements. Finally, in \ref{section:lie-haus-cap} we prove lower bounds for modulus and capacity. 



\section{Preliminaries}

\subsection{Notation}\label{sec:notation}
Let $(X,d)$ be a metric space. Given $x\in X$ and $r>0$ we denote by $B(x,r)$ the closed ball $B(x,r):= \{y\in X: d(x,y)\leq r\}$. Given a set $A\subset X$ and $x\in X$ we define $\dist(x,A):= \inf\{r\geq 0 : \exists a\in A \text{ with } d(x,a)\leq r\}$. For $\varepsilon>0$ we then denote $N(A,\varepsilon):= \{x\in X: \dist(x, A)\leq \varepsilon\}.$
Given $\alpha\geq 0$ and $A\subset X$ we denote by $\haus^\alpha(A)$ the $\alpha$-Hausdorff measure of $A$.
We denote by $\Lipspace(X)$ and $\Lipspace_b(X)$ the spaces of real-valued Lipschitz functions and bounded Lipschitz functions on $X$, respectively. The Lipschitz constant of a Lipschitz function $f$ will be denoted by $\Lipconst(f)$. The length of a continuous curve $c:[a,b]\to X$ is denoted by $\length(c)$. If $c$ is a Lipschitz curve then the metric derivative of $c$ is denoted $$|\dot{c}|(t) = \lim_{r\to 0} \frac{1}{r}d(c(t+r), c(t)),$$ whenever the limit exists. It is proved in \cite{Kirchheim-rectifiable} that $|\dot{c}|(t)$ exists for almost every $t\in[a,b]$.

\subsection{Currents in metric spaces}
\label{sec:metriccurrents}

In this section we recall the basic definitions from the theory of metric currents developed in \cite{Ambrosio-Kirchheim-currents} which we will need in the sequel. Apart from some simple lemmas, the present section does not contain any new results. 
We mention here that recently two variants of Ambrosio-Kirchheim's theory \cite{Ambrosio-Kirchheim-currents}  were developed in \cite{Lang-currents} and \cite{Lang-Wenger-pted-cptness}. We will not however use these variants.

Let $(X,d)$ be a complete metric space. 
\begin{defn}\label{def:current}
Let $m\geq 0$. An $m$-dimensional metric current  $T$ on $X$ is a multi-linear functional $T:\Lipspace_b(X)\times\Lipspace^m(X)\to\R$ satisfying the following
properties:
\begin{enumerate}
 \item If $\pi^j_i\to \pi_i$ pointwise as $j\to\infty$ and if $\sup_{i,j}\Lipconst(\pi^j_i)<\infty$ then
       \begin{equation*}
         T(f,\pi^j_1,\dots,\pi^j_m) \longrightarrow T(f,\pi_1,\dots,\pi_m).
       \end{equation*}
 \item If $\{x\in X:f(x)\not=0\}$ is contained in the union $\bigcup_{i=1}^mB_i$ of Borel sets $B_i$ and if $\pi_i$ is constant 
       on $B_i$ for $i=1, \dots, m$ then
       \begin{equation*}
         T(f,\pi_1,\dots,\pi_m)=0.
       \end{equation*}
 \item There exists a finite Borel measure $\mu$ on $X$ such that
       \begin{equation}\label{equation:mass-def}
        |T(f,\pi_1,\dots,\pi_m)|\leq \prod_{i=1}^m\Lipconst(\pi_i)\int_X|f|d\mu
       \end{equation}
       for all $(f,\pi_1,\dots,\pi_m)\in\Lipspace_b(X)\times\Lipspace^m(X)$.
\end{enumerate}
\end{defn}
In what follows, $m$-dimensional metric currents will also be called metric $m$-currents for short. The space of $m$-dimensional metric currents on $X$ is denoted by $\AKc_m(X)$ and the minimal Borel measure $\mu$
satisfying \eqref{equation:mass-def} is called mass of $T$ and denoted by $\|T\|$. We also call mass of $T$ the number $\|T\|(X)$ 
which we denote by $\mass(T)$.
The support of $T$ is  the closed set $$\spt T = \{x\in X:  \text{ $\|T\|(B(x,r))>0$ for all $r>0$}\}.$$ 

In the following we will often abbreviate $\pi=(\pi_1, \dots, \pi_m)$ and write $T(f,\pi)$ instead of $T(f,\pi_1,\dots, \pi_m)$. An important and basic example of a metric $m$-current on $\R^m$ is given by $$\Lbrack\theta\Rbrack(f, \pi):= \int_{\R^m}\theta f\det\left(\nabla\pi\right)\,d\haus^m$$ for an arbitrary function $\theta\in L^1(\R^m)$.

Let $0\leq k\leq m$. Given a bounded Borel function $g$ on $X$ and $\tau = (\tau_1,\dots, \tau_k)\in\Lipspace^k(X)$, the restriction $T\rstr (g,\tau)$ of an element $T\in\AKc_m(X)$ is defined by
\begin{equation*}
 (T\rstr(g,\tau))(f,\pi):= T(fg, \tau, \pi)
\end{equation*}
for all $(f,\pi) \in \Lipspace_b(X)\times\Lipspace^{m-k}(X)$. This expression is well-defined since $T$ can be extended to a functional on tuples for which the first argument lies in 
$L^\infty(X,\|T\|)$; in fact, we have $T\rstr (g,\tau)\in\AKc_{m-k}(X)$ by \cite[Theorem 3.5]{Ambrosio-Kirchheim-currents}. For a Borel set $A\subset X$ we abbreviate $T\rstr A:= T\rstr 1_A$, where $1_A$ is the indicator function,
\begin{equation*}
  (T\rstr A)(f,\pi):= T(f1_A,\pi).
\end{equation*}

If $m\geq 1$ and $T\in\AKc_m(X)$ then the boundary of $T$ is the functional
\begin{equation*}
 \bdry T(f,\pi_1,\dots,\pi_{m-1}):= T(1,f,\pi_1,\dots,\pi_{m-1});
\end{equation*}
it satisfies conditions (i) and (ii) in Defintion~\ref{def:current}. If it moreover satisfies (iii) in Definition~\ref{def:current} then $T$ is called a normal current.
By convention, elements of $\AKc_0(X)$ are also called normal currents. The space of normal metric $m$-currents on $X$ is denoted by $\AKnc_m(X)$. If $m\geq 2$ and $T\in\AKc_m(X)$ then we have $\bdry \bdry T = 0$ by property (ii) of Definition~\ref{def:current}. The following convention will be useful in Section~\ref{part2}. If $T\in\AKc_0(X)$ then we define $\mass(\bdry T)=0$ as a number and we define $\|\bdry T\|=0$ as a measure on $X$.

The push-forward of $T\in\AKc_m(X)$ 
under a Lipschitz map $\varphi$ from $X$ to another complete metric space $Y$ is given by
\begin{equation*}
 \varphi_\# T(g,\tau):= T(g\circ\varphi, \tau\circ\varphi)
\end{equation*}
for $(g,\tau)\in\Lipspace_b(Y)\times\Lipspace^m(Y)$. This defines a metric $m$-current on $Y$ and it follows directly from the definitions that $\bdry(\varphi_{\#}T) = \varphi_{\#}(\bdry T)$.

An element $T\in\AKc_0(X)$ is called integer rectifiable if there exist finitely many points $x_1,\dots,x_n\in X$ and $\theta_1,\dots,\theta_n\in\Z\backslash\{0\}$ such
that
\begin{equation}\label{eq:int-curr-0-dim}
 T(f)=\sum_{i=1}^n\theta_if(x_i)
\end{equation}
for every bounded Lipschitz function $f$.
 A current $T\in\AKc_m(X)$ with $m\geq 1$ is called integer rectifiable if the following properties hold:
 \begin{enumerate}
  \item $\|T\|$ is concentrated on a countably $\haus^m$-rectifiable set and vanishes on all $\haus^m$-negligible Borel sets;
  \item for any Lipschitz map $\varphi:X\to\R^m$ and any open set $U\subset X$ there exists $\theta\in L^1(\R^m,\Z)$ such that 
    $\varphi_\#(T\rstr U)=\Lbrack\theta\Rbrack$.
 \end{enumerate}
The space of integer rectifiable $m$-currents in $X$ is denoted by $\AKirc_m(X)$.
Integer rectifiable normal currents are called integral currents. The corresponding space is denoted by $\AKic_m(X)$. 
We introduce the notation $$\AKnc^0_m(X):= \{T\in\AKnc_m(X): \bdry T = 0\}$$ and $$\AKic^0_m(X):= \{T\in\AKic_m(X): \bdry T = 0\}.$$ Here, the condition $\bdry T=0$ should be replaced by the condition $T(1)=0$ in the case $m=0$.
More generally, if $\mathcal{C}\subset\AKnc_m(X)$ is a subset then we denote by $\mathcal{C}^0$ the subset of those $T\in\mathcal{C}$ satisfying $\bdry T=0$ if $m\geq 1$ and $T(1)=0$ if $m=0$.

Let $T\in\AKirc_m(X)$. Then $\set(T)$ is defined by $$\set(T):= \{x\in X: \Theta_{*m}(\|T\|, x)>0\},$$ where $\Theta_{*m}(\|T\|, x)$ is the lower $m$-density of $\|T\|$ at $x$ given by $$\Theta_{*m}(\|T\|, x):= \liminf_{r\to0^+}\frac{\|T\|(B(x,r))}{\omega_mr^m}$$ and $\omega_m$ is the volume of the unit ball in $\R^m$. It is shown in \cite[Theorem 4.6]{Ambrosio-Kirchheim-currents} that $\set(T)$ is a countably $\haus^m$-rectifiable set on which $\|T\|$ is concentrated, that is, $\|T\|(X\backslash\set(T))=0$. 

We make the following elementary but useful observation concerning Lipschitz curves and the currents which they induce. 

\begin{lemma}\label{lemma:Lip-curve-current-mass-integral}
 Given a Lipschitz curve $c:[a,b]\to X$, the integral current $T:=c_\#\Lbrack 1_{[a,b]}\Rbrack$ satisfies $\bdry T = \Lbrack c(b)\Rbrack - \Lbrack c(a)\Rbrack$ and $\mass(T)\leq\length(c)$; moreover, if $c$ is injective then $\mass(T)=\length(c)$. Finally, for every Borel function $g:X\to [0,\infty]$ we have
\begin{equation}\label{eq:integral-curve-current-Borelfct}
 \int_X g\,d\|T\| \leq \int_a^b g\circ c(t) |\dot{c}|(t)\,dt;
\end{equation}
if $\mass(T) = \length(c)$ then equality holds in \eqref{eq:integral-curve-current-Borelfct}.
\end{lemma}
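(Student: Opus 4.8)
The plan is to reduce everything to the statements in Ambrosio-Kirchheim's paper together with the basic properties of the metric derivative $|\dot c|$. First I would record the identity $\bdry T = \Lbrack c(b)\Rbrack - \Lbrack c(a)\Rbrack$: by definition of push-forward and boundary, $\bdry T(f) = T(1,f) = \Lbrack 1_{[a,b]}\Rbrack(1, f\circ c) = \int_a^b (f\circ c)'\,d\haus^1 = f(c(b)) - f(c(a))$ for every $f\in\Lipspace_b(X)$, using that $f\circ c$ is Lipschitz on $[a,b]$ and the fundamental theorem of calculus for Lipschitz functions; this is exactly $(\Lbrack c(b)\Rbrack - \Lbrack c(a)\Rbrack)(f)$. (One must first check $T$ is genuinely an integral current, but this is standard: $T = c_\#\Lbrack 1_{[a,b]}\Rbrack$ is the push-forward under a Lipschitz map of the integral current $\Lbrack 1_{[a,b]}\Rbrack\in\AKic_1(\R)$, hence integer rectifiable, and its boundary just computed has finite mass, so $T\in\AKic_1(X)$.)

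Next comes the mass bound and the integral inequality \eqref{eq:integral-curve-current-Borelfct}, which are really the same statement, since $\mass(T) = \|T\|(X) = \int_X 1\,d\|T\|$ is the special case $g\equiv 1$. For this I would invoke the area-formula type estimate for push-forwards of currents: for a Lipschitz map $\varphi$ and a current $R$ one has $\|\varphi_\# R\| \leq \varphi_\#(\Lipconst-\text{weighted mass})$, and in the one-dimensional case the sharp local version gives, for $R = \Lbrack 1_{[a,b]}\Rbrack$, the measure estimate $\|c_\#\Lbrack 1_{[a,b]}\Rbrack\| \leq c_\#(|\dot c|\,\haus^1\rstr[a,b])$ as measures on $X$; this is precisely \cite[Theorem 3.5 and the subsequent discussion]{Ambrosio-Kirchheim-currents} (or can be seen directly from the definition of mass by testing against tuples $(f,\pi)$ and using $|(\pi\circ c)'(t)|\leq \Lipconst(\pi)|\dot c|(t)$). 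Integrating a Borel $g\geq 0$ against both sides and applying the change-of-variables for push-forward measures yields
$$
\int_X g\,d\|T\| \leq \int_X g\,d\big(c_\#(|\dot c|\,\haus^1\rstr[a,b])\big) = \int_a^b g\circ c(t)\,|\dot c|(t)\,dt,
$$
which is \eqref{eq:integral-curve-current-Borelfct}; taking $g\equiv 1$ and recalling $\length(c)=\int_a^b|\dot c|(t)\,dt$ (Kirchheim's theorem, cited in \ref{sec:notation}) gives $\mass(T)\leq\length(c)$.

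Finally, the injective case. If $c$ is injective, then $c:[a,b]\to c([a,b])$ is a bijective Lipschitz map onto a $1$-rectifiable set, and one shows $\|T\| = c_\#(|\dot c|\,\haus^1\rstr[a,b])$ with \emph{equality}: the reverse inequality $\|T\|\geq c_\#(|\dot c|\,\haus^1\rstr[a,b])$ follows because for injective $c$ there is no cancellation — concretely, by Kirchheim's metric area formula $\haus^1(c(E)) = \int_E |\dot c|(t)\,dt$ for Borel $E\subset[a,b]$ (injectivity kills the multiplicity function), and $\|T\|$ restricted to $c([a,b])$ computed via the rectifiable structure of $T$ (using property (i)-(ii) of integer rectifiable currents and \cite[Theorem 4.6]{Ambrosio-Kirchheim-currents}, $\set(T)=c([a,b])$ up to $\haus^1$-null sets with unit multiplicity) equals exactly $\haus^1\rstr c([a,b])$. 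Once $\mass(T)=\length(c)$, equality must propagate to \eqref{eq:integral-curve-current-Borelfct} for all Borel $g\geq 0$: indeed equality of the total masses forces $\|T\| = c_\#(|\dot c|\,\haus^1\rstr[a,b])$ as measures (a finite measure dominated by another of the same total mass equals it), and then the change-of-variables identity above holds with equality.

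The main obstacle is the equality case: getting $\|T\| \geq c_\#(|\dot c|\,\haus^1\rstr[a,b])$ requires knowing there is no mass cancellation for an injective Lipschitz curve, which is where the rectifiability theory (the description of $\set(T)$ and the multiplicity-one statement from \cite[Theorem 4.6]{Ambrosio-Kirchheim-currents}) together with Kirchheim's area formula really enter; the two inequalities in the general case and the boundary computation are routine.
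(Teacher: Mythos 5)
Your treatment of the boundary, the inequality $\|T\|\leq c_\#(|\dot c|\,\lm^1)$ (hence $\mass(T)\leq\length(c)$ and \eqref{eq:integral-curve-current-Borelfct}), and the propagation of the equality $\mass(T)=\length(c)$ to all Borel $g$ coincides with the paper's argument; those parts are fine.

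For the injective case you take a genuinely different route. The paper does \emph{not} pass through the rectifiable representation of $T$. Instead it estimates $\mass(T)$ from below directly: by Kirchheim's Lemma~4 it decomposes $\{|\dot c|\neq 0\}$ into compact sets $K_i$ on which $c$ is $(1+\varepsilon)$-almost similar with ratio $\lambda_i$, then takes a $1$-Lipschitz extension $\pi$ of $\lambda_i(c|_{K_i})^{-1}$ and uses it as a test function to get $\|T\|(c(K_i))\geq |T(1_{c(K_i)},\pi)| = \lambda_i\lm^1(K_i)\geq (1+\varepsilon)^{-1}\mu(c(K_i))$, summing over $i$. This is elementary and self-contained.

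Your route — identify $\set(T)=c([a,b])$ with unit multiplicity via \cite[Theorems 4.6, 9.5]{Ambrosio-Kirchheim-currents} and then invoke the metric area formula — can be made to work, but as written it has a gap: nothing you cite actually gives you the unit multiplicity. Theorem~4.6 only says $\|T\|$ is concentrated on a countably rectifiable set; Theorem~9.5 gives the mass formula $\|T\|=\varphi_\#(|\theta|\mathbf{J}_1^*(\md\varphi)\lm^1)$ \emph{after} you have produced a biLipschitz parametrization $\varphi$ and identified $\theta$. That identification is exactly the non-trivial content for a push-forward, and establishing $|\theta|=1$ a.e.\ (no cancellation) when $c$ is injective is precisely what still needs an argument — in practice one again decomposes $[a,b]$ into biLipschitz pieces via Kirchheim's lemma and computes the push-forward on each. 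So your plan doesn't avoid Kirchheim's lemma; it hides it behind the representation theorem, while leaving the multiplicity-one step asserted rather than proved. Either supply that computation, or replace this step by the paper's direct test-function estimate, which sidesteps the issue entirely.
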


\begin{proof}
 Firstly, note that $$\partial T = c_\#(\partial \Lbrack 1_{[a,b]}\Rbrack) = \Lbrack c(b)\Rbrack - \Lbrack c(a)\Rbrack.$$ Now, given Lipschitz functions $f, \pi$ on $X$ with $f$ bounded we have $$|T(f,\pi)| = \left|\int_a^b f\circ c(t) (\pi\circ c)'(t) dt\right| \leq \Lipconst(\pi) \int_a^b|f\circ c(t)| |\dot{c}|(t) dt,$$ from which it follows that
 \begin{equation}\label{eq:mass-measure-curve-current}
  \|T\| \leq c_\#(|\dot{c}| \lm^1)
 \end{equation}
 and thus $\mass(T) \leq \int_a^b|\dot{c}|(t)dt = \length(c)$ and \eqref{eq:integral-curve-current-Borelfct} for every Borel function $g:X\to [0,\infty]$. It now follows directly from \eqref{eq:mass-measure-curve-current} that if $c$ is such that $\mass(T) = \length(c)$ then we have equality in \eqref{eq:mass-measure-curve-current}. Finally, suppose $c$ is injective. Let $\varepsilon>0$ and set  $H:= \{t\in[a,b]: |\dot{c}|(t)\not=0\}$. By \cite[Lemma 4]{Kirchheim-rectifiable} there exist $\lambda_i\in(0,\infty)$ and $K_i \subset [a,b]$ compact, pairwise disjoint, and satisfying $\lm^1(H\backslash \cup K_i)=0$ and $$\lambda_i |t-s| \leq d(c(t), c(s))\leq (1+\varepsilon) \lambda_i |t-s|$$ for all $t,s\in K_i$. Set $\mu:= c_\#(|\dot{c}| \lm^1)$. Fix $i$ and let $\pi$ be a $1$-Lipschitz function on $X$ which extends $\lambda_i \left(c|_{K_i}\right)^{-1}$. It then follows that $$\|T\|(c(K_i)) \geq |T(1_{c(K_i)}, \pi)| = \lambda_i \lm^1(K_i) \geq \frac{1}{1+\varepsilon} \mu(c(K_i)).$$ Since $i$ was arbitrary, and the $c(K_i)$ are pairwise disjoint, and $\mu(X\backslash \cup c(K_i))=0$ we obtain that $$\mass(T) \geq \sum\|T\|(c(K_i)) \geq \frac{1}{1+\varepsilon} \sum \mu(c(K_i)) = \frac{1}{1+\varepsilon}\mu(\cup c(K_i)) = \frac{1}{1+\varepsilon} \mu(X).$$ Since $\varepsilon>0$ was arbitrary this yields equality in \eqref{eq:mass-measure-curve-current} and concludes the proof.
\end{proof}

As above, let $(X,d)$ be a complete metric space and endow $[0,1]\times X$ with the Euclidean product metric. Given a Lipschitz function $f$ on $[0,1]\times X$ and $t\in[0,1]$ we define the function $f_t:X\longrightarrow \R$ by
$f_t(x):= f(t,x)$. To every $T\in\AKnc_m(X)$ and every $t\in[0,1]$ we associate the normal $m$-current on $[0,1]\times X$ given
by the formula
\begin{equation*}
  ([t]\times T)(f,\pi_1,\dots,\pi_m):= T(f_{t},\pi_{1\,t},\dots,\pi_{m\,t}). 
\end{equation*}
The product of a normal current with the interval $[0,1]$ is defined by
  \begin{equation*}
  \begin{split}
   ([0,1]\times& T) (f,\pi_1,\dots,\pi_{m+1}):= \\
      &\sum_{i=1}^{m+1}(-1)^{i+1}\int_0^1T\left(f_t\frac{\partial \pi_{i\,t}}{\partial t},\pi_{1\,t},
                                               \dots,\pi_{i-1\,t},\pi_{i+1\,t},\dots,\pi_{m+1\,t}\right)dt
  \end{split}
  \end{equation*}
  for $(f,\pi_1,\dots,\pi_{m+1})\in\Lipspace_b([0,1]\times X)\times\Lipspace^{m+1}([0,1]\times X)$.
It can be proved, see \cite{Ambrosio-Kirchheim-currents} and also \cite{Wenger-isop-Eucl}, that $[0,1]\times T\in\AKnc_{m+1}([0,1]\times X)$ and
 \begin{equation*}
   \partial([0,1]\times T)= [1]\times T - [0]\times T - [0,1]\times\partial T
  \end{equation*}
  if $m\geq 1$ and $ \partial([0,1]\times T)= [1]\times T - [0]\times T$ if $m=0$; moreover, if $T\in\AKic_m(X)$ then $[0,1]\times T\in\AKic_{m+1}([0,1]\times X)$.
We have the following simple lemma which estimates the mass of the push-forward of $[0,1]\times T$ under a Lipschitz map.

\begin{lemma}\label{lemma:mass-estimate-push-cone}
Let $\psi:[0,1]\times X\to Y$ be a Lipschitz map, where $Y$ is a complete metric space, and let $T\in\AKnc_m(X)$. Suppose $\lambda:[0,1]\to[0,\infty)$ and $\delta:X\to[0,\infty)$ are bounded and Borel measurable functions such that $\psi(t,\cdot)$ is $\lambda(t)$-Lipschitz for every $t\in[0,1]$ and $\psi(\cdot, x)$ is $\delta(x)$-Lipschitz for every $x\in X$. Then we have
\begin{equation*}
 \|\psi_{\#}([0,1]\times T)\| \leq (m+1) \psi_{\#}(\lambda^m\lm^1\times \delta \|T\|).
\end{equation*}
\end{lemma}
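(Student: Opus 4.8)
The strategy is to unwind the definition of the product current $[0,1]\times T$, apply $\psi_\#$, and estimate term by term using the two Lipschitz bounds on $\psi$. Fix a tuple $(g,\tau_1,\dots,\tau_{m+1})\in\Lipspace_b(Y)\times\Lipspace^{m+1}(Y)$. By definition of the push-forward and of the product with an interval,
\begin{equation*}
\psi_\#([0,1]\times T)(g,\tau) = \sum_{i=1}^{m+1}(-1)^{i+1}\int_0^1 T\!\left((g\circ\psi)_t\,\frac{\partial(\tau_i\circ\psi)_t}{\partial t},\,(\tau_1\circ\psi)_t,\dots,\widehat{(\tau_i\circ\psi)_t},\dots,(\tau_{m+1}\circ\psi)_t\right)dt,
\end{equation*}
where the hat denotes omission. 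Writing $\psi_t=\psi(t,\cdot)$, the $i$-th slot function is $\tau_i\circ\psi_t$, which is $\big(\Lipconst(\tau_i)\lambda(t)\big)$-Lipschitz on $X$; and the time-derivative factor $\frac{\partial}{\partial t}(\tau_i\circ\psi)(t,x)$ is, for a.e.\ $t$, bounded in absolute value by $\Lipconst(\tau_i)\,\delta(x)$, since $t\mapsto\psi(t,x)$ is $\delta(x)$-Lipschitz and $\tau_i$ is $\Lipconst(\tau_i)$-Lipschitz. (One should first reduce to $\Lipconst(\tau_i)\le 1$ for all $i$ by multilinearity, which is harmless.)

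Next I would apply the mass inequality \eqref{equation:mass-def} for the current $T$ to each summand. Because the first argument of $T$ in the $i$-th term has modulus at most $|g\circ\psi_t|\,\delta(\cdot)$ and the remaining $m$ Lipschitz slots each have constant at most $\lambda(t)$ (after the normalization above), we get
\begin{equation*}
\left|\psi_\#([0,1]\times T)(g,\tau)\right| \le \sum_{i=1}^{m+1}\int_0^1 \lambda(t)^m\int_X |g\circ\psi(t,x)|\,\delta(x)\,d\|T\|(x)\,dt = (m+1)\int_0^1\!\!\int_X |g(\psi(t,x))|\,\lambda(t)^m\,\delta(x)\,d\|T\|(x)\,dt.
\end{equation*}
The right-hand side is exactly $(m+1)\int_{[0,1]\times X}|g\circ\psi|\,d(\lambda^m\lm^1\times\delta\|T\|)$, which by the definition of push-forward of a measure equals $(m+1)\int_Y |g|\,d\big(\psi_\#(\lambda^m\lm^1\times\delta\|T\|)\big)$. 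Since this holds for every $g$ with the Lipschitz slots normalized, and the mass measure $\|\psi_\#([0,1]\times T)\|$ is the minimal Borel measure controlling $\psi_\#([0,1]\times T)$ in the sense of \eqref{equation:mass-def}, the claimed inequality $\|\psi_\#([0,1]\times T)\|\le (m+1)\psi_\#(\lambda^m\lm^1\times\delta\|T\|)$ follows.

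The main technical point — and the only step requiring genuine care rather than bookkeeping — is justifying the pointwise a.e.\ bound $\big|\frac{\partial}{\partial t}(\tau_i\circ\psi)(t,x)\big|\le\Lipconst(\tau_i)\delta(x)$ simultaneously with the measurability needed to apply Fubini in the double integral, keeping in mind that $\psi$ is only Lipschitz in the joint variable so the partial derivative exists merely a.e.; here one invokes that a Lipschitz function of one real variable is differentiable a.e.\ with derivative bounded by its Lipschitz constant, applied to $t\mapsto\tau_i(\psi(t,x))$ for fixed $x$, together with Borel measurability of $\lambda$ and $\delta$ to make the product measure and its push-forward well defined. Everything else is a direct consequence of multilinearity of $T$, the mass axiom \eqref{equation:mass-def}, and the definitions of the product current and of push-forward.
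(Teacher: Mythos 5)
Your proposal is correct and follows essentially the same route as the paper's own proof: expand $\psi_{\#}([0,1]\times T)$ via the definitions of the product current and push-forward, bound each of the $m+1$ summands using the mass axiom~\eqref{equation:mass-def} together with the Lipschitz estimates $\Lipconst(\tau_j\circ\psi_t)\le\Lipconst(\tau_j)\lambda(t)$ and $\big|\tfrac{\partial}{\partial t}(\tau_i\circ\psi)(t,x)\big|\le\Lipconst(\tau_i)\delta(x)$ for a.e.~$t$, and conclude by minimality of the mass measure. The paper keeps the factor $\prod_j\Lipconst(\tau_j)$ explicit rather than normalizing to $1$, and leaves the a.e.\ differentiability/Fubini bookkeeping implicit, but these are presentational differences only.
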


 \begin{proof}
 Let $(f,\pi_1,\dots,\pi_{m+1})\in\Lipspace_b(Y)\times\Lipspace^{m+1}(Y)$. We compute
  \begin{equation*}
   \begin{split}
    |\psi_{\#}(&[0,1]\times T)(f,\pi_1,\dots,\pi_{m+1})| \\
       &\leq \sum_{i=1}^{m+1}\left|\int_0^1 
             T(f\circ\psi_t\frac{\partial (\pi_i\circ\psi_t)}{\partial t},\pi_1\circ\psi_t,\dots,\pi_{i-1}\circ\psi_t,
               \pi_{i+1}\circ\psi_t,\dots,\pi_{m+1}\circ\psi_t)dt\right|\\
                                        &\leq \sum_{i=1}^{m+1} \int_0^1\prod_{j\not=i}\Lipconst(\pi_j\circ\psi_t)
                                                \int_X\left|f\circ\psi_t\frac{\partial (\pi_i\circ\psi_t)}{\partial t}\right|
                                                 d\|T\|dt\\
                                        &\leq (m+1)\prod_{j=1}^{m+1}\Lipconst(\pi_j)\int_0^1
                                                \int_X|f\circ\psi(t,x)|\delta(x) d\|T\|(x)\lambda^m(t)dt,
   \end{split}
  \end{equation*}
 from which the claim follows together with the definition of mass. 
 \end{proof}

\begin{defn}
Let $m\geq 0$. Given $T\in\AKc_m(X)$ and $\mathcal{C}\subset \AKc_{m+1}(X)$ we define
$$
\fillvol(T, \mathcal{C})= \inf\{\mass(S):\, S \in \mathcal{C},\, \bdry S = T\},
$$
where we use the convention $\inf\emptyset = \infty$.
\end{defn}
 If $T\in\AKic_m(X)$ then we usually abbreviate 
$$
\fillvol(T):= \fillvol(T, \AKic_{m+1}(X)).
$$

\begin{defn}
Let $m\geq 0$ and let $\mathcal{C} = (\mathcal{C}_m, \mathcal{C}_{m+1})$ with $\mathcal{C}_k \subset\AKc_k(X)$ for $k=m, m+1$, and such that $\partial S \in \mathcal{C}_m$ for all $S\in \mathcal{C}_{m+1}$. The flat norm of an element $T\in \mathcal{C}_m$ is defined by 
\begin{equation}\label{eq:def-flatnorm-int-rect-FAT} 
 \flatnormFAT(T, \mathcal{C}):= \inf\{ \mass(R)+\mass(V):\, R \in \mathcal{C}_m,\, V \in \mathcal{C}_{m+1}, \, T=R+\bdry V  \}. 
\end{equation}
\end{defn}
It is clear that $\flatnormFAT(T, \mathcal{C})\leq \mass(T)$, moreover $\flatnormFAT(\bdry S,\mathcal{C})\leq \mass(S)$ if $S\in \mathcal{C}_{m+1}$. 
If $\mathcal{C}_k=\AKc_k(X)$ for $k=m, m+1$ we will write $\flatnormFAT(T)$ instead of $\flatnormFAT(T, \mathcal{C})$. If $\mathcal{C}_k=\AKirc_k(X)$ for $k=m, m+1$ we will write $\flatnormSCR(T)$ instead of $\flatnormFAT(T, \mathcal{C})$.
Note that for $T\in\AKirc_m(X)$ we have $\flatnormFAT(T)\leq \flatnormSCR(T)$. 
Note also that for $n\in\Z$ and $T\in\AKirc_m(X)$ we have $\flatnormSCR(nT)\leq |n|\flatnormSCR(T)$ and strict inequality can occur, see \cite{White-multiple}.
If $T\in\AKnc_m(X)$ then we have $\flatnormFAT(T)\leq \fillvol(T, \AKnc_{m+1}(X))$, and if $T\in\AKic_m(X)$ then $\flatnormSCR(T)\leq \fillvol(T)$. Moreover, we have the following fact.

\begin{lemma}
 Let $X$ be complete metric space and $m\geq 1$. Suppose there exists $r>0$ such that $$\fillvol(T)\leq \mass(T)$$ for all $T\in\AKic^0_m(X)$ with $\mass(T)< r$. Then $$\fillvol(T) = \flatnormSCR(T)$$ for all $T\in\AKic^0_m(X)$ with $\mass(T)<r$. 
\end{lemma}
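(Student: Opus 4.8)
The plan is to show the two inequalities $\flatnormSCR(T) \le \fillvol(T)$ and $\fillvol(T) \le \flatnormSCR(T)$ for every $T \in \AKic^0_m(X)$ with $\mass(T) < r$. The first is already noted in the excerpt (taking $R = 0$ in the definition of the flat norm). So the content is the reverse inequality: given a decomposition $T = R + \bdry V$ with $R \in \AKirc_m(X)$, $V \in \AKirc_{m+1}(X)$, I want to produce a filling $S \in \AKic_{m+1}(X)$ of $T$ with $\mass(S)$ not much larger than $\mass(R) + \mass(V)$.

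First I would fix such a decomposition $T = R + \bdry V$ and note that since $\bdry T = 0$ and $\bdry \bdry V = 0$, we get $\bdry R = 0$, so $R \in \AKirc^0_m(X)$; moreover $R = T - \bdry V$ is normal (as $T$ and $\bdry V$ are), hence $R \in \AKic^0_m(X)$. The idea is that $V$ already fills $T - R$, so it remains to fill $R$ by a current of mass comparable to $\mass(R)$, and then add this filling to $V$. Here is where the hypothesis enters: if $\mass(R) < r$, then by assumption $\fillvol(R) \le \mass(R)$, so there is $W \in \AKic_{m+1}(X)$ with $\bdry W = R$ and $\mass(W) \le \mass(R) + \ep$ for any $\ep > 0$ (or $\le \mass(R)$ if the infimum is attained; in any case one takes an almost-minimizer). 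Then $S := V + W$ satisfies $\bdry S = \bdry V + R = T$ and $\mass(S) \le \mass(V) + \mass(W) \le \mass(V) + \mass(R) + \ep$. Taking the infimum over decompositions and letting $\ep \to 0$ gives $\fillvol(T) \le \flatnormSCR(T)$.

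The gap in the above is that a priori $\mass(R)$ need not be smaller than $r$, even if $\mass(T) < r$, since the flat norm decomposition could involve a large $R$ cancelling against $\bdry V$. To handle this I would argue that it suffices to take the infimum in $\flatnormSCR(T)$ over decompositions with $\mass(R) + \mass(V) < r$: indeed if $\flatnormSCR(T) < r$ (which holds here since $\flatnormSCR(T) \le \mass(T) < r$), then any almost-minimizing decomposition has $\mass(R) + \mass(V)$ close to $\flatnormSCR(T) < r$, hence $\mass(R) < r$, and the argument of the previous paragraph applies. So the clean statement to prove is: for $T$ with $\flatnormSCR(T) < r$ (equivalently, here, $\mass(T) < r$), and any $\eta > 0$, choose a decomposition $T = R + \bdry V$ with $\mass(R) + \mass(V) < \flatnormSCR(T) + \eta < r$; then $\mass(R) < r$, apply the hypothesis to $R$ to get $W$ with $\bdry W = R$, $\mass(W) \le \mass(R) + \eta$, and set $S = V + W$; this yields $\fillvol(T) \le \mass(V) + \mass(R) + \eta \le \flatnormSCR(T) + 2\eta$, and $\eta \to 0$ finishes it.

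The main point to be careful about — and the only real obstacle — is the bookkeeping ensuring $\mass(R) < r$ so that the hypothesis is applicable to $R$; everything else (that $R$ is integral, that $\bdry(V+W) = T$, that mass is subadditive) is routine from the definitions recalled in the excerpt. One should also double-check that $R \in \AKic^0_m(X)$ rather than merely $\AKirc^0_m(X)$, which follows because $R = T - \bdry V$ is a difference of integer rectifiable currents with finite boundary mass (indeed $\bdry R = 0$), hence integral; this is needed so that the hypothesis "$\fillvol(T) \le \mass(T)$ for all $T \in \AKic^0_m(X)$ with $\mass(T) < r$" literally applies with $R$ in place of $T$.
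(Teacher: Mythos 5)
Your proof is correct and follows essentially the same route as the paper's: choose a near-minimizing decomposition $T = R + \bdry V$ (so that $\mass(R) < r$, which you correctly isolate as the only real point needing care), observe that $\bdry R = 0$ and $R$ is integral, fill $R$ using the hypothesis, and add that filling to $V$. Your treatment is marginally more careful than the paper's in allowing an extra $\eta$ when extracting the near-minimal filling of $R$ (the paper writes $\mass(U) \le \mass(R)$ where strictly it has only $\fillvol(R)\le\mass(R)$), but this is a cosmetic point and the argument is the same.
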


Examples of spaces satisfying the hypotheses in the lemma include Banach spaces, ${\rm CAT}(\kappa)$-spaces, and Carnot groups with a left-invariant Finsler metric, see \cite{Wenger-isop-Eucl}, \cite{Wenger-flatconv}, \cite{Wenger-asymptotic-rank}.

\begin{proof}
 Let $T\in\AKic^0_m(X)$ with $\mass(T)<r$ and let $\varepsilon>0$ be such that $\mass(T) + \varepsilon <r$. Choose $R\in\AKic_m(X)$ and $V\in\AKic_{m+1}(X)$ such that $T= R+\partial V$ and $$\mass(R) + \mass(V)\leq \flatnormSCR(T) + \varepsilon.$$ Since $\partial R=0$ and $\mass(R)<r$ there exists $U\in\AKic_{m+1}(X)$ with $\partial U = R$ and $\mass(U)\leq \mass(R)$. It follows that $\partial(U+V) = T$ and hence $$\fillvol(T) \leq \mass(U) + \mass(V) \leq \mass(R) + \mass(V)\leq \flatnormSCR(T) +\varepsilon.$$ Since $\varepsilon>0$ was arbitrary this shows that $\fillvol(T)\leq \flatnormSCR(T)$. Since the opposite inequality holds for all $T\in\AKic_m(X)$ with $\bdry T = 0$ the proof is complete. 
\end{proof}


\section{Cochains, upper norms, and upper gradients}
\label{part1}

\subsection{Definition of cochains, upper norms, and upper gradients} 
\label{part11}
In this section we define cochains, our basic objects of study for the forthcoming sections. We first give a general definition of a cochain (Definition \ref{cochaindefn}) as a function from an additive subgroup of $m$-dimensional currents in complete metric spaces, without any regularity assumptions. We slightly abuse terminology here by only requiring sublinearity from the cochains instead of linearity. We then define upper norms and upper gradients of cochains. Using these notions, we can talk about the regularity of cochains in general (complete) metric measure spaces, and try to prove analytic properties for them. In particular, the cochains can be seen as a generalization of classical differential forms to non-smooth spaces; recall that a smooth $m$-form induces a linear cochain by integration over $m$-dimensional currents. 

Let $X$ be a complete metric space, $m\geq 0$, and let $\mathcal{C}$ be an additive subgroup of $\AKc_m(X)$.

\begin{defn}
\label{cochaindefn}
 A function $\omega: \mathcal{C}\to\overline{\R}$ is called cochain on $\mathcal{C}$ if $\omega(0) = 0$ and
 \begin{equation*}
  |\omega(T)| \leq |\omega(T+S)| + |\omega(S)|
 \end{equation*}
 for all $T,S\in\mathcal{C}$. If furthermore
 \begin{equation*}
  \omega(T+S) = \omega(T) + \omega(S)
 \end{equation*}
whenever each term is finite then $\omega$ is called a linear cochain.
\end{defn}

If $\omega$ is a cochain on $\mathcal{C}$ then clearly $|\omega(T)| = |\omega(-T)|$ and thus $$|\omega(T+S)|\leq |\omega(T)| + |\omega(S)|$$ for all $T,S\in\mathcal{C}$.
A basic example of a linear cochain is given as follows.

\begin{example}\label{example:lip-form-cochain}
 Let $(f,\pi)\in \Lipspace_b(X)\times\Lipspace^m(X)$. Then a linear cochain on $\AKc_m(X)$ is given by $\omega(T)= T(f,\pi).$ 
\end{example}

Further simple examples of cochains are provided by the mass $\mass$ and the flat norm $\flatnormFAT$, which are cochains on $\AKc_m(X)$, and by the flat norm $\flatnormSCR$, which is a cochain on $\AKirc_m(X)$. More generally, if $g,h:X\to[0,\infty]$ are Borel measurable functions then 
\begin{equation}\label{hgflatnorm}
\omega(T):= \inf\left\{\int_Xh\,d\|R\| + \int_Xg\,d\|V\|: R\in\AKc_m(X), V\in\AKnc_{m+1}(X), T=R+\bdry V\right\}
\end{equation} 
defines a cochain on $\AKc_m(X)$. Analogously, one obtains a cochain on $\AKirc_m(X)$ if for $T\in\AKirc_m(X)$ one takes the infimum over all $R\in\AKirc_m(X)$ and $V\in\AKic_{m+1}(X)$ with $T=R+\bdry V$ in the above equation \eqref{hgflatnorm}. 

\begin{defn}
Let $\omega$ be a cochain on $\mathcal{C}$. A Borel function $h:X\to[0,\infty]$ is called upper norm of $\omega$ if 
\begin{equation}
\label{unorm}
|\omega(T)| \leq \int_X h \, d\norm{T} 
\end{equation}
for every $T \in \mathcal{C}$.
\end{defn}


\begin{defn}
Let $\omega$ be a cochain on $\mathcal{C}$ and let $\mathcal{C}'\subset\AKc_{m+1}(X)$ be a subset. A Borel function $g: X\to[0,\infty]$ is called upper gradient of $\omega$ with respect to $\mathcal{C}'$ if 
\begin{equation}\label{ugrad}
|\omega(T)| \leq \int_X g \, d\norm{S} 
\end{equation}
for all $T\in\mathcal{C}$ and $S \in \mathcal{C}'$  such that $\bdry S =T$.  
\end{defn}

We often simply say ``$g$ is an upper gradient of $\omega$'' if $\mathcal{C}'$ is clear from the context. In Section~\ref{section:estimtes-upper-norm-grad} we will determine an upper norm and an upper gradient of the cochain given in Example~\ref{example:lip-form-cochain}. We will furthermore establish a precise relationship between upper gradients of the linear $0$-cochain induced by a Lipschitz function $f$ and the upper gradients of the function $f$, as defined in \cite{HeiKos}, \cite{Shan}.

\bigskip

Now assume that $X$ is equipped with a Borel regular measure $\mu$. Let $\Gamma \subset \AKc_m(X)$ be a family of currents and $1 \leq p < \infty$. The \emph{$p$-modulus} $M_p(\Gamma)$ in $(X,\mu)$ is defined as $\inf \int_X f^p \, d\mu$, where the infimum is taken over all Borel functions $f \geq 0$ such that $\int_X f \, d||T|| \geq 1$ for every $T \in \Gamma$. As a consequence of Lemma~\ref{lemma:Lip-curve-current-mass-integral} we obtain the following relationship between the modulus of a curve family and the modulus defined above. Let $\Gamma'$ be a family of Lipschitz curves in $X$ and let $\Gamma$ denote the family of integral currents induced by curves in $\Gamma'$, that is, $$\Gamma:=\{c_\#\Lbrack 1_{[a,b]}\Rbrack: \text{$c$ is a curve in $\Gamma'$ and parameterized on $[a,b]$}\}.$$ Then we have $M_p(\Gamma)\geq M_p(\Gamma')$, where the right hand side denotes the modulus of the curve family as defined e.g. in \cite{HeiKos}, \cite{Shan}. Moreover, if every curve in $\Gamma'$ is injective then $M_p(\Gamma)= M_p(\Gamma')$. 

The theory of $p$-modulus of general measures and Lipschitz surfaces was initiated by Fuglede  \cite{Fuglede-modulus}. Ziemer \cite{Ziemer} applied the theory of currents to prove a duality estimate between capacities and moduli of separating surfaces. Surface modulus has recently been applied in quasiconformal mapping theory, cf. \cite{Raj}, \cite{HeiWu}, \cite{PanWu}. 


Let $\Lambda \subset \AKc_m^0(X)$ and $\mathcal{C}'\subset\AKc_{m+1}(X)$, and let $1 \leq p < \infty$. We define the \emph{$p$-capacity} 
$\operatorname{cap}_p(\Lambda, \mathcal{C}')$ by 
$$
\operatorname{cap}_p(\Lambda, \mathcal{C}')= M_p(\Gamma), 
$$
where 
$$
\Gamma = \{S \in \mathcal{C}': \bdry S =T \text{ for some } T \in \Lambda \}. 
$$  
In Section~\ref{section:rel-haus-cap} we will establish a relationship between the Hausdorff dimension of a set $A$ and the capacity of a family of currents with support in $A$. In Section~\ref{section:lie-haus-cap} we will furthermore establish lower bounds for the capacity in the setting of Lie groups, endowed with a left-invariant Finsler metric.

Given $1\leq p,q\leq \infty$ we denote by $\mathcal{L}_q(\mathcal{C})$ the family of cochains on $\mathcal{C}$ which have an upper norm in $L^q(X,\mu)$ and by $\mathcal{W}_p(\mathcal{C}, \mathcal{C}')$ the family of cochains on $\mathcal{C}$ which have an upper gradient with respect to $\mathcal{C}'$ which is in $L^p(X,\mu)$. We furthermore set $$\mathcal{W}_{q,p}(\mathcal{C}, \mathcal{C}'):= \mathcal{L}_q(\mathcal{C})\cap \mathcal{W}_p(\mathcal{C}, \mathcal{C}').$$ 
If $\omega \in \mathcal{W}_{q,p}(\mathcal{C}, \mathcal{C}')$, then we denote 
\begin{equation}
\label{poli}
\norm{\omega}_{q,p}=\inf \, \norm{h}_q+\norm{g}_p, 
\end{equation}
where the infimum is taken with respect to upper norms $h$ and upper gradients $g$ of $\omega$ with respect to $\mathcal{C}'$. 

In the sequel we will use the abbreviations $\mathcal{W}_p(\AKnc_m(X)):= \mathcal{W}_p(\AKnc_m(X), \AKnc_{m+1}(X))$ and $\mathcal{W}_p(\AKic_m(X)):= \mathcal{W}_p(\AKic_m(X), \AKic_{m+1}(X))$; $\mathcal{W}_{q,p}(\AKnc_m(X)):= \mathcal{W}_{q,p}(\AKnc_m(X), \AKnc_{m+1}(X))$ and $\mathcal{W}_{q,p}(\AKic_m(X)):= \mathcal{W}_{q,p}(\AKic_m(X), \AKic_{m+1}(X))$. 
Examples of $\mathcal{W}_{q,p}$-cochains are given in Example \ref{example:lip-form-cochain} (see Proposition \ref{prop:upper-grad-norm-Lipform-cochain}). Also, it is straightforward to verify that the function $h$ in \eqref{hgflatnorm} is an upper norm of the corresponding cochain $\omega$, and $g$ is an upper gradient (notice that we can restrict to surfaces $R \in \mathcal{C}$ and $V \in \mathcal{C}'$ in \eqref{hgflatnorm}). So, if we assume $h \in L^q$ and $g \in L^p$, then $\omega \in \mathcal{W}_{q,p}$. We discuss another basic set of examples in Section \ref{exception}


\subsection{Exceptional sets and Sobolev cochains} \label{exception}
In this section we define weak versions of upper norms and upper gradients, and the (Newtonian) Sobolev spaces $W_{q,p}$ of linear cochains. We then show that Euclidean differential forms which belong to the Sobolev space $W^{q,p}_d(\R^n,\bigwedge^m)$ (see the definition below) also belong to $W_{q,p}$. 

Let $\omega:\mathcal{C}\to\overline{\R}$ be a cochain on an additive subgroup $\mathcal{C}$ of $\AKc_m(X)$.
We say that a Borel function $h:X\to[0,\infty]$ is a \emph{$q$-weak upper norm} of $\omega$, where $1\leq q<\infty$, if \eqref{unorm} holds for every $T \in\mathcal{C}\setminus \Gamma$ for some family $\Gamma\subset\mathcal{C}$ with  $M_q(\Gamma)=0$. 
Let $\mathcal{C}'\subset \AKc_{m+1}(X)$. Similarly, we say that a Borel function $g:X\to[0,\infty]$ is a \emph{$p$-weak upper gradient} of $\omega$ with respect to $\mathcal{C}'$, where $1\leq p<\infty$, if \eqref{ugrad} holds for every $S \in\mathcal{C}'\setminus \Lambda$ for some family $\Lambda\subset\mathcal{C}'$ with $M_p(\Lambda)=0$. 

It follows from the definition of modulus that, if $\Lambda \subset \AKc_m(X)$ satisfies $M_p(\Lambda)=0$, then there exists a Borel function $f \in L^p(X,\mu)$ such that $\int_X f \, d\norm{T}=\infty$ for every $T \in \Lambda$. Therefore, a cochain $\omega$ has a $p$-integrable upper gradient (upper norm) if and only if it has a $p$-weak upper gradient (upper norm). 

The following lemma is a special case of  \cite[Theorem 3]{Fuglede-modulus}.

\begin{lemma}[Fuglede's lemma]
\label{fugledelemma}
Let $1 \leq p < \infty$, and let $f$ be a Borel function. Moreover, let $(f_j)$ be a sequence of Borel functions converging to $f$ in $L^p(X,\mu)$. Then there exist a subsequence 
$(f_{j_k})$ and $\Lambda \subset \AKc_m(X)$ with $M_p(\Lambda)=0$ such that 
$$
\int_{X}  |f_{j_k}-f| \, d\norm{T} \to 0 
$$
for every $T \in \AKc_m(X) \setminus \Lambda$. 
\end{lemma}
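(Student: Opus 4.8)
The plan is to reduce the statement to the fact that the $p$-modulus is countably subadditive on families of currents, which in turn follows from the definition of modulus as an infimum of an $L^p$-integral over admissible functions. First I would pass to a subsequence $(f_{j_k})$ with
$$
\norm{f_{j_k} - f}_p^p \leq 2^{-kp}\cdot 2^{-k},
$$
i.e.\ $\norm{f_{j_k}-f}_p \leq 2^{-k}(2^{-k})^{1/p} \leq 4^{-k}$, which is possible since $f_j \to f$ in $L^p(X,\mu)$. Writing $g_k := |f_{j_k} - f|$, this gives $\sum_k 2^k \norm{g_k}_p < \infty$, and in particular the Borel function $g := \sum_k 2^k g_k$ lies in $L^p(X,\mu)$ (by monotone convergence and the triangle inequality, $\norm{g}_p \leq \sum_k 2^k\norm{g_k}_p < \infty$).

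Next I would define, for each $N\in\N$, the family
$$
\Lambda_N := \left\{ T \in \AKc_m(X) : \int_X g_k \, d\norm{T} > 2^{-k} \text{ for some } k \geq N \right\}
$$
and set $\Lambda := \bigcap_N \Lambda_N$. For $T\notin\Lambda$ there is some $N$ with $\int_X g_k\,d\norm{T} \leq 2^{-k}$ for all $k\geq N$, hence $\int_X |f_{j_k}-f|\,d\norm{T} \to 0$; so it remains to show $M_p(\Lambda)=0$, for which it suffices to show $M_p(\Lambda_N)\to 0$ as $N\to\infty$ (using monotonicity $M_p(\Lambda) \leq M_p(\Lambda_N)$, which is immediate from the definition since a function admissible for $\Lambda_N$ is admissible for the smaller family $\Lambda$). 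To bound $M_p(\Lambda_N)$, observe that $h_N := \sum_{k\geq N} 2^k g_k$ is a Borel function with $\norm{h_N}_p \leq \sum_{k\geq N} 2^k\norm{g_k}_p \to 0$, and for any $T\in\Lambda_N$ there is $k\geq N$ with $\int_X g_k\,d\norm{T} > 2^{-k}$, so $\int_X h_N \, d\norm{T} \geq 2^k \int_X g_k\,d\norm{T} > 1$; thus $h_N$ is admissible for $\Lambda_N$ and $M_p(\Lambda_N) \leq \norm{h_N}_p^p \to 0$.

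The argument is essentially the standard Borel--Cantelli/subadditivity proof; the only point requiring a little care is that $M_p$ is defined here with the integral against the mass measures $\norm{T}$ rather than against arc length, but every step above only uses that $T\mapsto \int_X f\,d\norm{T}$ is a monotone, positively homogeneous functional of the nonnegative Borel function $f$, which holds verbatim. Consequently I do not expect any genuine obstacle; the main thing to get right is the bookkeeping of the exponents so that both $\norm{h_N}_p\to 0$ and the admissibility inequality $\int_X h_N\,d\norm{T}>1$ hold simultaneously for $T\in\Lambda_N$, which the choice $\norm{g_k}_p\leq 4^{-k}$ (equivalently $\sum_k 2^k\norm{g_k}_p<\infty$) arranges.
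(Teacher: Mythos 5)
Your argument is correct and is precisely the classical Fuglede subadditivity argument, transplanted verbatim to the setting where one integrates against the mass measures $\|T\|$ instead of arc length. Note that the paper itself gives no proof of Lemma~\ref{fugledelemma}: it simply states that the lemma is a special case of Fuglede's Theorem~3 in \cite{Fuglede-modulus}, so there is no "paper's proof" to diverge from, and yours is the canonical one. One tiny bookkeeping slip: from $\|f_{j_k}-f\|_p^p\leq 2^{-kp}2^{-k}$ you get $\|f_{j_k}-f\|_p\leq 2^{-k(1+1/p)}$, which is \emph{not} $\leq 4^{-k}$ when $p>1$ (the inequality goes the other way); this is harmless, since all you actually need is $\sum_k 2^k\|g_k\|_p<\infty$, and $\sum_k 2^k\cdot 2^{-k(1+1/p)}=\sum_k 2^{-k/p}<\infty$, but the cleaner move is to simply choose the subsequence so that $\|f_{j_k}-f\|_p\leq 4^{-k}$ directly. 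The rest — $\Lambda_N$, $\Lambda=\bigcap_N\Lambda_N$, monotonicity of $M_p$, admissibility of $h_N=\sum_{k\geq N}2^kg_k$, and $\|h_N\|_p\to 0$ — is exactly right and uses only that $T\mapsto\int_X\rho\,d\|T\|$ is monotone and homogeneous in $\rho$, as you observe.
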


Suppose now that $\omega \in \mathcal{L}_q(\mathcal{C})$, $1<q< \infty$, and let $(h_j)$ be a sequence of upper norms of $\omega$ such that 
$$
\lim_{j \to \infty} \int_X h_j^q \, d\mu = \inf_{h} \int_X h^q \, d\mu, 
$$
where the infimum is taken over all upper norms $h$ of $\omega$. By weak compactness, there is a subsequence, also denoted by $(h_j)$, converging weakly in $L^q$ to 
$h_0 \in L^q(X,\mu)$. Moreover, by Mazur's lemma, there is a sequence of convex combinations $\tilde{h}_k$ of the functions $h_j$ converging strongly in $L^q$ to $h_0$. Clearly, each $\tilde{h}_k$ is also an upper norm of $\omega$, so by Lemma \ref{fugledelemma}, $h_0$ is a $q$-weak upper norm of $\omega$. Similarly, we see that $L^p$-bounded sequences of upper gradients converge, up to a subsequence, to a $p$-weak upper gradient. It follows in particular that when $1< p,q< \infty$, the infimum in \eqref{poli} is attained by some $q$-weak upper norm $h_0$ and $p$-weak upper gradient $g_0$.

We now turn to the definition of the Sobolev space of linear cochains. 

\begin{lemma}
\label{sobolemma}
Let $\omega_1, \omega_2: \mathcal{C} \to \overline{\R}$ be linear cochains. Define $\omega_1+\omega_2$ by setting 
$$
(\omega_1+\omega_2)(T)=\omega_1(T)+\omega_2(T) \quad \text{if} \quad |\omega_1(T)|+|\omega_2(T)| < \infty, 
$$ 
and $(\omega_1+\omega_2)(T)=\infty$ otherwise. Then $\omega_1+\omega_2$ is a linear cochain on $\mathcal{C}$. Moreover, if $1 \leq q,p < \infty$ and $\omega_1,\omega_2 \in \mathcal{W}_{q,p}(\mathcal{C},\mathcal{C'}) $, then also $\omega_1+\omega_2 \in \mathcal{W}_{q,p}(\mathcal{C},\mathcal{C'})$.   
\end{lemma}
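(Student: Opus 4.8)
The plan is to verify the three defining properties of a linear cochain for $\omega_1+\omega_2$ by a direct case analysis on the extended‑real values, and then to produce explicit upper norms and upper gradients for the sum out of those for the summands.

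For the first assertion I would argue in three steps. First, $(\omega_1+\omega_2)(0)=0$ is immediate since $\omega_1(0)=\omega_2(0)=0$ are finite. Second, for linearity, suppose $T,S\in\mathcal{C}$ are such that each of $(\omega_1+\omega_2)(T)$, $(\omega_1+\omega_2)(S)$, $(\omega_1+\omega_2)(T+S)$ is finite; by the definition of the sum this forces all six values $\omega_i(T)$, $\omega_i(S)$, $\omega_i(T+S)$ ($i=1,2$) to be finite real numbers, so linearity of each $\omega_i$ gives $\omega_i(T+S)=\omega_i(T)+\omega_i(S)$, and adding these two identities and regrouping yields $(\omega_1+\omega_2)(T+S)=(\omega_1+\omega_2)(T)+(\omega_1+\omega_2)(S)$. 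Third, for sublinearity, fix $T,S\in\mathcal{C}$; if $|(\omega_1+\omega_2)(T+S)|+|(\omega_1+\omega_2)(S)|=\infty$ there is nothing to prove, so assume both terms are finite. Then, as above, $\omega_i(T+S)$ and $\omega_i(S)$ are all finite, and sublinearity of $\omega_i$ gives $|\omega_i(T)|\leq|\omega_i(T+S)|+|\omega_i(S)|<\infty$; hence $(\omega_1+\omega_2)(T)=\omega_1(T)+\omega_2(T)$ is finite. Now I invoke the genuine linearity of each $\omega_i$ to write $\omega_i(T)=\omega_i(T+S)-\omega_i(S)$, whence $(\omega_1+\omega_2)(T)=(\omega_1+\omega_2)(T+S)-(\omega_1+\omega_2)(S)$, and the triangle inequality in $\R$ closes this case.

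For the second assertion I would show that if $h_i\in L^q(X,\mu)$ is an upper norm of $\omega_i$ and $g_i\in L^p(X,\mu)$ an upper gradient of $\omega_i$ with respect to $\mathcal{C}'$, then $h_1+h_2$ is an upper norm of $\omega_1+\omega_2$ and $g_1+g_2$ an upper gradient with respect to $\mathcal{C}'$. The key pointwise inequality is $|(\omega_1+\omega_2)(T)|\leq|\omega_1(T)|+|\omega_2(T)|$, valid for every $T\in\mathcal{C}$: when the left‑hand side is finite this is the triangle inequality, and when it equals $+\infty$ the right‑hand side equals $+\infty$ as well by the definition of the sum. Combining this with \eqref{unorm} for $h_1$ and $h_2$, respectively with \eqref{ugrad} for $g_1$ and $g_2$, together with the additivity of $\int_X\cdot\,d\|T\|$ and of $\int_X\cdot\,d\|S\|$, gives the required estimates; since $h_1+h_2\in L^q(X,\mu)$ and $g_1+g_2\in L^p(X,\mu)$ we conclude $\omega_1+\omega_2\in\mathcal{L}_q(\mathcal{C})\cap\mathcal{W}_p(\mathcal{C},\mathcal{C}')=\mathcal{W}_{q,p}(\mathcal{C},\mathcal{C}')$.

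There is no real obstacle here; the argument is bookkeeping. The only two points that deserve a little care are: keeping the case distinctions forced by the $\overline{\R}$‑valued convention straight — in particular checking that whenever $(\omega_1+\omega_2)(\cdot)$ is set to $+\infty$ by the ``otherwise'' clause the relevant integral estimate still holds because one of the $\omega_i$ already blows up there — and remembering that sublinearity of the sum genuinely uses the \emph{linearity} (not merely the sublinearity) of $\omega_1$ and $\omega_2$.
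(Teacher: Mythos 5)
Your proof is correct and takes essentially the same route as the paper: a case analysis on the extended-real values to verify the linear-cochain axioms for $\omega_1+\omega_2$, followed by the observation that $h_1+h_2$ and $g_1+g_2$ serve as upper norm and upper gradient for the sum. The paper verifies the equivalent ``triangle'' form $|\omega(T+S)|\leq|\omega(T)|+|\omega(S)|$ (splitting into the case where all three values are finite and the case $|(\omega_1+\omega_2)(T+S)|=\infty$), whereas you directly verify the defining ``reverse triangle'' form $|\omega(T)|\leq|\omega(T+S)|+|\omega(S)|$, but the two are interchangeable and the bookkeeping is the same; your explicit remark that the sublinearity of the sum genuinely uses the \emph{linearity} of each $\omega_i$ (to deduce finiteness of $\omega_i(T)$ and to rewrite $\omega_i(T)=\omega_i(T+S)-\omega_i(S)$) makes the argument slightly more transparent than the paper's version.
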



\begin{proof}
Let $T,S \in \mathcal{C}$. Firstly, if 
$$
|(\omega_1+\omega_2)(T+S)|+|(\omega_1+\omega_2)(T)|+|(\omega_1+\omega_2)(S)| < \infty,
$$ 
then also $|\omega_i(T+S)|+|\omega_i(T)|+|\omega_i(S)| < \infty$ for $i=1,2$, and so 
$$
(\omega_1+\omega_2)(T+S)=(\omega_1+\omega_2)(T)+(\omega_1+\omega_2)(S).
$$ 
Secondly, if $|(\omega_1+\omega_2)(T+S)|=\infty$, then the definition of cochain implies that $|\omega_i(T)|+|\omega_i(S)|=\infty$ for $i=1$ or $i=2$. If follows that 
$|(\omega_1+\omega_2)(T)|+|(\omega_1+\omega_2)(S)|=\infty$. We conclude that $\omega_1+\omega_2$ satisfies the conditions of a linear cochain. Also, if $h_1$, $h_2$ are upper norms and $g_1$ and $g_2$ are upper gradients with respect to $\mathcal{C}'$ of $\omega_1$ and $\omega_2$, respectively, then $h_1+h_2$ and $g_1+g_2$ are upper norm and upper gradient, with respect to $\mathcal{C}'$, of $\omega_1+\omega_2$. 
\end{proof}

It is clear that $\lambda \omega$ belongs to $\mathcal{W}_{q,p}(\mathcal{C},\mathcal{C'})$ for every $\lambda \in \R$ if $\omega$ does. Therefore, Lemma \ref{sobolemma} implies that 
the set of linear cochains in $\mathcal{W}_{q,p}(\mathcal{C},\mathcal{C'})$ forms a vector space. We equip this space with the seminorm $\norm{\omega}_{q,p}$ defined in \eqref{poli}. 

\begin{defn}
\label{sobospace}
The space $W_{q,p}(\mathcal{C},\mathcal{C'})$ is the set of equivalence classes of linear cochains in $\mathcal{W}_{q,p}(\mathcal{C},\mathcal{C'})$ under the equivalence relation defined by $\omega_1 \sim \omega_2$ if $\norm{\omega_1-\omega_2}_{q,p}=0$. 
\end{defn} 

We see that $W_{q,p}(\mathcal{C},\mathcal{C'})$ equipped with the norm $\norm{\cdot}_{q,p}$ is a normed space. Moreover, if $1<p,q<\infty$, and if $\omega_1$ and $\omega_2$ are cochains representing the same element in  $W_{q,p}(\mathcal{C},\mathcal{C'})$, then 
$\omega_1(T)=\omega_2(T)$ for every $T \in \mathcal{C} \setminus (\Gamma \cup \Lambda)$, where $M_q(\Gamma)=\operatorname{cap}_p(\Lambda, \mathcal{C}')=0$. Following the proof of \cite[Theorem 3.7]{Shan}, one can show that $W_{q,p}(\mathcal{C},\mathcal{C'})$ is a Banach space. We do not develop further properties of the Sobolev spaces here.

We next show that Sobolev forms in the space $W^{q,p}_d(\R^n,\bigwedge^m)$ induce cochains in the space 
$W_{q,p}(\AKc_m(\R^n),\AKc_{m+1}(\R^n))$. Let $1<q,p<\infty$, and let $\omega$ be a differential $m$-form expressed in Euclidean coordinates by 
$$
\omega= \sum_I \omega_I \, dx_I. 
$$
We assume that the coefficients $\omega_I$ belong to $L^q(\R^n)$. Furthermore, we say that the $(m+1)$-form $d\omega=\sum_{J}(d\omega)_J \, dx_J$ is the distributional exterior derivative of $\omega$ if 
$$
\int_{\R^n} d \omega \wedge \varphi  = (-1)^{m+1} \int_{\R^n} \omega \wedge d \varphi 
$$
for every smooth, compactly supported $(n-m-1)$-form $\varphi$. We assume that the coefficients $(d\omega)_J$ belong to $L^p(\R^n)$. Then we say that $\omega$ belongs to the Sobolev space $W^{q,p}_d(\R^n,\bigwedge^m)$. See \cite{IwaLuto} and \cite{Iwaniecetal} for the $L^p$-theory of differential forms. 
Let $\omega \in W^{q,p}_d(\R^n,\bigwedge^m)$. Then there is a sequence of smooth compactly supported $m$-forms $\omega^j$ converging to $\omega$ in $W^{q,p}_d(\R^n,\bigwedge^m)$, i.e. 
$$
\sum_I \int_{\R^n} |\omega^j_I-\omega_I|^q \, dx + \sum_J \int_{\R^n} |(d\omega^j)_J-(d\omega)_J|^p \, dx \to 0 
$$
as $j \to \infty$. 
Let $T \in \AKc_m(\R^n)$, and define 
$$
\tilde{\omega}^j(T)= \sum_I T(\omega^j_I,x_{i_1},\ldots,x_{i_m}), 
$$
where $dx_{i_1}\wedge \ldots \wedge dx_{i_m}=dx_I$. Then $\tilde{\omega}^j$ is a linear cochain, and 
$$
|\tilde{\omega}^j(T)| \leq C_1 \int_{\R^n} |\omega^j|\, d\norm{T}, 
$$
where $C_1$ depends only on $n$, and $|\omega^j|$ is the euclidean norm of the coefficients $\omega^j_I$. We conclude that $C_1|\omega^j|$ is an upper norm of $\tilde{\omega}^j$. 
Next, for $S \in \AKc_{m+1}(\R^n)$, define 
$$
\tilde{d\omega}^j(S)=\sum_J S(d\omega^j_J, x_{\ell_1},\ldots,x_{\ell_{m+1}}),    
$$
where $dx_{\ell_1}\wedge \ldots \wedge dx_{\ell_{m+1}}=dx_J$. Then, if $\partial S =T$, 
$$
\tilde{\omega}^j(T)=\tilde{d \omega}^j(S); 
$$
this can be seen by approximating the coefficients $\omega_I^j$ by polynomials and applying the product rule and the alternating properties of currents, see \cite{Ambrosio-Kirchheim-currents}. We conclude that 
$$
|\tilde{\omega}^j(T)| \leq C_2 \int_{\R^n} |d\omega^j| \, d\norm{S}, 
$$
where $C_2$ depends only on $n$, and $|d\omega^j|$ is the euclidean norm of the coefficients $d \omega^j_J$. We conclude that $C_2|d\omega^j|$ is an upper gradient of $\tilde{\omega}^j$. 
By Lemma \ref{fugledelemma}, there is a subsequence, also denoted by $(\omega^j)$, such that 
$$
 \int_{\R^n} |\omega^j_I-\omega_I| \, d\norm{T} \to 0 
$$
for every $T \in \AKc_m \setminus \Gamma$, where $M_q(\Gamma)=0$, and 
$$
 \int_{\R^n} |(d\omega^j)_J-(d\omega)_J| \, d\norm{S} \to 0 
$$
for every $S \in \AKc_{m+1} \setminus \Lambda$, where $M_p(\Lambda)=0$. We define $\tilde{\omega}:\AKc_m \to \R \cup \{\infty\}$ by $\tilde{\omega}(T):= \lim_{j \to \infty} \tilde{\omega}^j(T)$ when the limit exists, and $\infty$ otherwise. We see that 
$\tilde{\omega}$ is a linear cochain in the sense of Definition \ref{cochaindefn}. Furthermore, 
$$
|\tilde{\omega}(T)| \leq C_1 \lim_{j \to \infty} \int_{\R^n} |\omega^j| \, d\norm{T} = C_1 \int_{\R^n} |\omega|\, d\norm{T} 
$$
for every $T \in \AKc_m \setminus \Gamma$, where $M_q(\Gamma)=0$, so $C_1 |\omega|$ is a $q$-weak upper norm of $\tilde{\omega}$. Similarly, there is a set $\Lambda \subset \AKc_{m+1}$ of zero $p$-modulus such that  
$$
|\tilde{\omega}(T)| \leq C_2 \lim_{j \to \infty} \int_{\R^n} |d\omega^j| \, d\norm{S} = C_2 \int_{\R^n} |d\omega|\, d\norm{S} 
$$
whenever $S \in \AKc_{m+1} \setminus \Lambda$, $\partial S=T$, so $C_2 |d\omega|$ is a $p$-weak upper gradient of $\tilde{\omega}$ with respect to $\AKc_{m+1}$. This shows that $\omega$ induces a cochain $\tilde{\omega} \in W_{q,p}(\AKc_m(\R^n),\AKc_{m+1}(\R^n))$. Moreover, the corresponding Sobolev norms are equivalent.

\subsection{Estimates for upper norm and upper gradient}\label{section:estimtes-upper-norm-grad}

In this section we prove several results concerning upper norms and upper gradients of the cochain defined in Example~\ref{example:lip-form-cochain}. For this we first recall that for a Lipschitz function $f:X\to\R$, defined on a metric space $X$, the pointwise Lipschitz constants of $f$ are defined by
\begin{equation*}
 \ptLip f(x):= \lim_{r\to 0^+} L_rf(x)
\end{equation*}
and
\begin{equation*}
\ptlowlip f(x):= \lim_{r\to0^+} \ell_rf(x),
\end{equation*}
where
\begin{equation*}
 L_rf(x):= \sup_{s<r} \sup_{d(x,y)<s}\frac{|f(x)-f(y)|}{s},
\end{equation*}
\begin{equation*}
 \ell_rf(x):= \inf_{s<r} \sup_{d(x,y)<s}\frac{|f(x)-f(y)|}{s},
\end{equation*}
see \cite{Keith-diff}.
Since $L_rf$ and $\ell_rf$ are Borel measurable (see \cite{Keith-diff}) it follows that $\ptLip f$ and $\ptlowlip f$ are Borel measurable. We can give a first estimate for upper norm and upper gradient of the above mentioned cochain as follows.

\begin{proposition}\label{prop:upper-grad-norm-Lipform-cochain}
 Let $X$ be a complete metric space, $m\geq 0$, and $(f,\pi_1,\dots,\pi_m)\in \Lipspace_b(X)\times\Lipspace^m(X)$. Define $\omega:\AKc_m(X)\to\R$ by $\omega(T):= T(f,\pi_1,\dots,\pi_m)$. Then
 \begin{equation*}
  h(x):= |f(x)|\prod_{i=1}^m\ptLip \pi_i(x)
 \end{equation*}
 is an upper norm of $\omega$ and
 \begin{equation*}
  g(x):= \ptLip f(x)\prod_{i=1}^m\ptLip \pi_i(x) 
 \end{equation*}
 is an upper gradient of $\omega$ with respect to $\AKc_{m+1}(X)$. 
 \end{proposition}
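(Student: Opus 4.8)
The plan is to prove the two claims separately, and in each case to localize the estimate to a single "cube" or level set where the $\pi_i$ are nearly constant, using the locality axiom (ii) from Definition~\ref{def:current} together with the continuity axiom (i). I will first treat the upper norm, which is the simpler of the two, and then the upper gradient essentially follows by applying the upper norm estimate to $S$ in place of $T$ with the extra Lipschitz function $f$ absorbed into the product.

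\emph{Upper norm.} Fix $T\in\AKc_m(X)$. First I would reduce to the case where each $\ptLip\pi_i$ is bounded; since $\pi_i$ is Lipschitz, $\ptLip\pi_i(x)\leq\Lipconst(\pi_i)$ everywhere, so this is automatic, but it is convenient to keep the pointwise quantity. The key point is a subdivision argument: given $\varepsilon>0$ and $N\in\N$, partition $\R$ into intervals of length $1/N$ and, for each multi-index $k=(k_1,\dots,k_m)\in\Z^m$, let $B_k:=\{x\in X: \pi_i(x)\in[k_i/N,(k_i+1)/N)\text{ for all }i\}$. On each $B_k$ replace $\pi_i$ by the truncation $\pi_i^{k,N}:=\max\{k_i/N,\min\{\pi_i,(k_i+1)/N\}\}$, which is $1$-Lipschitz (after rescaling, $\Lipconst\leq 1$), agrees with $\pi_i$ on $B_k$, and is constant outside the "slab" $\{k_i/N\leq\pi_i\leq(k_i+1)/N\}$. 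Using multilinearity to write $\omega(T)=\sum_k T(f1_{B_k},\pi_1,\dots,\pi_m)$ (this requires care: one writes $f=\sum_k f1_{B_k}$ as a limit of finite sums and invokes continuity of $T$ in its first slot, which holds by the mass axiom and dominated convergence), and then replacing each $\pi_i$ by $\pi_i^{k,N}$, which changes nothing by locality (ii) since the difference is supported where $\pi_i$ is already pinned, one gets
\begin{equation*}
|\omega(T)|\leq\sum_k|T(f1_{B_k},\pi_1^{k,N},\dots,\pi_m^{k,N})|\leq\sum_k\int_{B_k}|f|\prod_{i=1}^m\Lipconst(\pi_i^{k,N})\,d\|T\|.
\end{equation*}
The crux is then to show $\Lipconst(\pi_i^{k,N})$ can be replaced, in the limit $N\to\infty$, by $\sup_{x\in B_k}L_{r_N}\pi_i(x)$ for some $r_N\to0$, so that the right side converges to $\int_X|f|\prod_i\ptLip\pi_i\,d\|T\|$. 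This is where one uses that $\|T\|$-a.e.\ point is a density point and that $L_r\pi_i\downarrow\ptLip\pi_i$; a Vitali–Carathéodory / monotone convergence argument with the Borel measurability of $L_r\pi_i$ closes the gap. The main obstacle is precisely this last step: controlling $\Lipconst$ of the truncation on a slab by the pointwise Lipschitz constant, which is not true for a fixed $N$ but becomes true in the limit after integrating against $\|T\|$; this is the technical heart and will need the fine structure of $\|T\|$ (e.g.\ that it is a Radon measure with the usual differentiation theory, via \cite[Theorem 3.5]{Ambrosio-Kirchheim-currents} and \cite{Kirchheim-rectifiable}).

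\emph{Upper gradient.} Let $T\in\AKc_m(X)$ and $S\in\AKc_{m+1}(X)$ with $\bdry S=T$. By definition of boundary, $\omega(T)=T(f,\pi_1,\dots,\pi_m)=\bdry S(f,\pi_1,\dots,\pi_m)=S(1,f,\pi_1,\dots,\pi_m)$. Now apply the upper-norm estimate already proved — but to the $(m+1)$-current $S$ and the tuple $(1,f,\pi_1,\dots,\pi_m)$, i.e.\ with first argument the constant function $1$ and with the $(m+1)$ Lipschitz functions $f,\pi_1,\dots,\pi_m$. This gives
\begin{equation*}
|\omega(T)|=|S(1,f,\pi_1,\dots,\pi_m)|\leq\int_X|1|\cdot\ptLip f(x)\prod_{i=1}^m\ptLip\pi_i(x)\,d\|S\|=\int_X g\,d\|S\|,
\end{equation*}
which is exactly \eqref{ugrad}. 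So once the upper-norm half is done, the upper-gradient half is immediate. I would therefore state and prove the upper-norm estimate for a general tuple $(f_0,f_1,\dots,f_m)$ with $f_0\in\Lipspace_b(X)$ and $f_1,\dots,f_m\in\Lipspace(X)$, deriving as the bound $\int_X|f_0|\prod_{i=1}^m\ptLip f_i\,d\|\cdot\|$, and then read off both statements: for the upper norm take $f_0=f$ and $f_i=\pi_i$ and note $|f|\prod\ptLip\pi_i$; for the upper gradient take $f_0=1$, $f_1=f$, $f_2=\pi_1,\dots,f_{m+1}=\pi_m$ and note $\ptLip f\prod\ptLip\pi_i$. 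The $m=0$ case of the upper norm is trivial ($|T(f)|\leq\int|f|\,d\|T\|$ is the mass axiom), and the $m=0$ case of the upper gradient is the statement that $\ptLip f$ is an upper gradient of the $0$-cochain $T\mapsto T(f)$, recovering Cheeger's theorem; these serve as consistency checks.
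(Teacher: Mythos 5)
Your reduction of the upper-gradient statement to the upper-norm statement via $\omega(T)=S(1,f,\pi_1,\dots,\pi_m)$ is exactly right and matches the paper's logic: both parts of the proposition are read off from a single mass-type inequality for tuples. The problem is in your proof of that inequality, and you already sense where: the step you flag as the ``technical heart'' does not go through.

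Your decomposition partitions $X$ by \emph{level sets of the $\pi_i$} (the slabs $B_k$) and then replaces $\pi_i$ by a truncation pinned to an interval of length $1/N$. But $\Lipconst(\pi_i^{k,N})$ is a global quantity: it is governed by the behavior of $\pi_i$ at small metric scales \emph{somewhere} in the slab, not by $\ptLip\pi_i$ at the points where $\|T\|$ lives. Truncating the range shrinks oscillation over large distances, but the Lipschitz constant of the truncation is attained at nearby pairs of points, where it is still of order $\Lipconst(\pi_i)$. There is no $r_N\to 0$ for which $\Lipconst(\pi_i^{k,N})\leq\sup_{B_k}L_{r_N}\pi_i$: the slab $B_k$ can have arbitrarily large diameter, so ``on $B_k$'' does not localize anything. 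And even if it did, $\sup_{x\in B_k}L_{r_N}\pi_i(x)$ tends to $\sup_{B_k}\ptLip\pi_i$, not to the pointwise value under the integral, so a further refinement would still be needed. In short, partitioning the \emph{range} of $\pi_i$ cannot by itself produce the pointwise quantity $\ptLip\pi_i(x)$.

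What the argument needs, and what the paper does, is a partition along two different axes simultaneously: (a) a countable cover of $\spt T$ by Borel sets $B_i$ of metric diameter less than $r$, and (b) a partition $A_j:=\{\varepsilon(j-1)\leq L_r\pi<\varepsilon j\}$ by level sets of the \emph{approximate Lipschitz constant} $L_r\pi$, not of $\pi$ itself. On $B_i\cap A_j$ one then has, for any two points $x,y$ at distance $<r$, $|\pi(x)-\pi(y)|\leq L_r\pi(x)\,d(x,y)<\varepsilon j\,d(x,y)$, so the restriction is genuinely $\varepsilon j$-Lipschitz; extend by McShane, invoke the strengthened locality of \cite[Theorem 3.5]{Ambrosio-Kirchheim-currents}, and sum. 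Letting $r,\varepsilon\to 0$ and using dominated convergence then yields $\|T\rstr(1,\pi)\|\leq\ptLip\pi(\cdot)\,\|T\|$, and iterating over $\pi_1,\dots,\pi_m$ gives the mass inequality. The small-diameter condition on $B_i$ is the ingredient your slabs lack, and the level sets of $L_r\pi$ rather than of $\pi$ are what make the pointwise constant appear. Your overall architecture (prove a tuple inequality, then read off both claims) is sound; it is this inner estimate that needs to be redone along these lines.
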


If the cochain defined in  Proposition~\ref{prop:upper-grad-norm-Lipform-cochain} is restricted to $\AKic_m(X)$ then $\ptLip \pi_i$ can be replaced by $\ptlowlip \pi_i$. More precisely, we have the following.

\begin{proposition}\label{prop:upper-grad-norm-lowlip}
Let $X$ be a complete metric space, $m\geq 0$, and $(f,\pi_1,\dots,\pi_m)\in \Lipspace_b(X)\times\Lipspace^m(X)$. Define $\omega:\AKic_m(X)\to\R$ by $\omega(T):= T(f,\pi_1,\dots,\pi_m)$. Then
 \begin{equation*}
  h(x):= |f(x)|\prod_{i=1}^m\ptlowlip \pi_i(x)
 \end{equation*}
 is an upper norm of $\omega$ and
 \begin{equation*}
  g(x):= \ptlowlip f(x)\prod_{i=1}^m\ptlowlip \pi_i(x) 
 \end{equation*}
 is an upper gradient of $\omega$ with respect to $\AKic_{m+1}(X)$. 
\end{proposition}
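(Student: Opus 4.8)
The plan is to deduce Proposition~\ref{prop:upper-grad-norm-lowlip} from Proposition~\ref{prop:upper-grad-norm-Lipform-cochain} together with a structure result specific to integral currents, namely that $\|T\|$ and the mass of fillings are concentrated on a countably $\haus^m$-rectifiable set where the relevant measures are controlled by $\haus^m$. The point of passing from $\ptLip\pi_i$ to $\ptlowlip\pi_i$ is that these two pointwise Lipschitz constants agree $\haus^m$-almost everywhere along a rectifiable set in the directions that matter; more precisely, the contribution to $T(f,\pi_1,\dots,\pi_m)$ only sees the behavior of the $\pi_i$ tangentially to the rectifiable set $\set(T)$ (respectively $\set(S)$ for the upper gradient), and on a rectifiable set the oscillation bound in the definition of $\ell_r$ suffices.

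First I would recall that for $T\in\AKic_m(X)$ with $m\geq 1$, the measure $\|T\|$ is concentrated on the countably $\haus^m$-rectifiable set $\set(T)$, and that by the chain rule / locality properties in \cite{Ambrosio-Kirchheim-currents} one may compute $T(f,\pi_1,\dots,\pi_m)$ using, $\haus^m$-a.e. on $\set(T)$, a ``tangential differential'' of the $\pi_i$ restricted to the approximate tangent planes. The key technical lemma to isolate is: if $E\subset X$ is countably $\haus^m$-rectifiable and $\pi$ is Lipschitz on $X$, then for $\haus^m$-a.e. $x\in E$ the tangential behavior of $\pi$ along $E$ at $x$ is bounded by $\ptlowlip\pi(x)$ rather than merely by $\ptLip\pi(x)$ — intuitively because along a rectifiable piece $\pi$ is (up to an $\ep$-bi-Lipschitz chart) a Lipschitz function on a subset of $\R^m$, and one can choose, as in the proof of Lemma~\ref{lemma:Lip-curve-current-mass-integral} via \cite[Lemma 4]{Kirchheim-rectifiable}, pieces on which $\pi$ is Lipschitz with constant close to $\ell_r\pi(x)$ at a.e.\ point. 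Then I would run the same estimate as in Proposition~\ref{prop:upper-grad-norm-Lipform-cochain} on each such piece, sum over the countable decomposition, and let $\ep\to 0$ to replace every $\ptLip\pi_i$ by $\ptlowlip\pi_i$.

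For the upper norm statement, I would apply this directly: $|\omega(T)| = |T(f,\pi_1,\dots,\pi_m)| \leq \int_X |f|\prod_{i=1}^m \ptlowlip\pi_i \, d\|T\|$, because $\|T\|$ lives on the rectifiable set $\set(T)$ and on that set the factor $\prod \ptLip\pi_i$ can be reduced to $\prod \ptlowlip\pi_i$ by the lemma above. For the upper gradient statement, given $S\in\AKic_{m+1}(X)$ with $\bdry S = T$, I would write $|\omega(T)| = |T(f,\pi_1,\dots,\pi_m)| = |S(1,f,\pi_1,\dots,\pi_m)| = |\bdry S(f,\pi_1,\dots,\pi_m)|$ and apply Proposition~\ref{prop:upper-grad-norm-Lipform-cochain}-type reasoning to the $(m+1)$-current $S$, viewing $(1,f,\pi_1,\dots,\pi_m)$ as a generalized $(m+1)$-form with first coordinate the constant $1$: one gets $|S(1,f,\pi_1,\dots,\pi_m)| \leq \int_X \ptLip f \prod_{i=1}^m\ptLip\pi_i\,d\|S\|$ (the constant $1$ contributes $\ptLip 1 = 0$... so one must instead treat $f$ as one of the ``$\pi$''-slots, i.e. bound $|S(1,f,\pi_1,\dots,\pi_m)|\le \int_X \ptLip f\prod_{i=1}^m\ptLip\pi_i\,d\|S\|$ directly from property (iii) of Definition~\ref{def:current} applied after the chain-rule reduction), and then reduce each $\ptLip$ to $\ptlowlip$ using that $\|S\|$ is concentrated on the $\haus^{m+1}$-rectifiable set $\set(S)$.

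The main obstacle I anticipate is the measure-theoretic lemma that lets one replace $\ptLip\pi$ by $\ptlowlip\pi$ $\haus^m$-a.e. on a rectifiable set: one must make precise the sense in which the value $T(f,\pi)$ depends only on the tangential differential of the $\pi_i$ and then show that this tangential differential is controlled by $\ell_r$ rather than $L_r$. The cleanest route is probably to reduce, via a countable bi-Lipschitz decomposition of $\set(T)$ (with distortion $1+\ep$) into pieces carried by subsets of $\R^m$, to the Euclidean statement that for a Lipschitz $\pi$ on $\R^m$ one has, at $\haus^m$-a.e.\ point $x$, $|\nabla\pi(x)|\le C(m)\,\ptlowlip\pi(x)$ — which follows from differentiability a.e.\ (Rademacher) since at a point of differentiability $\ptlowlip\pi(x)=\ptLip\pi(x)=|\nabla\pi(x)|$. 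After that reduction the rest is bookkeeping: transporting the $\ep$-distortion through the product $\prod\ptLip\pi_i$, summing the countably many pieces (using that $\|T\|$ gives full measure to their union and is absolutely continuous with respect to $\haus^m$ restricted to $\set(T)$ in the integer-rectifiable case), and sending $\ep\to 0$.
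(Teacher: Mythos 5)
Your proposal is correct and follows essentially the same route as the paper: both parametrize the rectifiable set carrying $\|T\|$ (resp.\ $\|S\|$) by (nearly) bi-Lipschitz charts, invoke (metric) Rademacher differentiability, and observe that the tangential differential of each $\pi_i$ along the set is controlled at a.e.\ point by $\ptlowlip\pi_i$ rather than merely by $\ptLip\pi_i$, then deduce the upper-gradient bound by applying the resulting mass estimate to the $(m+1)$-current $S$ with the constant $1$ in the first slot. The paper formalizes the pointwise determinant bound via Kirchheim's metric derivative $\md\varphi_x$ and the metric Jacobian $\mathbf{J}_m^*$ together with the area formula $\|T\|=\varphi_\#\bigl(|\theta|\,\mathbf{J}_m^*(\md\varphi)\,\lm^m\bigr)$, whereas you propose a limiting argument over $(1+\ep)$-bi-Lipschitz pieces — interchangeable formalizations of the same underlying idea.
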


In both propositions above, if $m=0$ then the products $\prod_{i=1}^m\ptLip \pi_i(x)$ and $\prod_{i=1}^m\ptlowlip \pi_i(x)$ appearing in the definitions of $h$ and $g$ should be replaced by $1$. Proposition~\ref{prop:upper-grad-norm-lowlip} provides an analog for cochains of the fact, proved by Cheeger in \cite{Cheeger-diff}, that if $f:X\to\R$ is a Lipschitz function then $\ptlowlip f(\cdot)$ is an upper gradient of $f$. Actually, this fact also follows from Proposition~\ref{prop:upper-grad-norm-lowlip} above together with Proposition~\ref{prop:upper-grad-lipfct-0-cochain} below.

Propositions~\ref{prop:upper-grad-norm-Lipform-cochain} and \ref{prop:upper-grad-norm-lowlip} come as a direct consequence of the following lemma.

\begin{lemma}
Let $X$ be a complete metric space, $m\geq 1$, and $T\in\AKc_m(X)$. Then for every bounded Borel function $f$ on $X$ and Lipschitz functions $\pi_1, \dots, \pi_m$ on $X$, we have
\begin{equation}\label{eq:strong-mass-def-estimate-ptLip}
 |T(f,\pi_1,\dots,\pi_m)|\leq \int_X|f(x)| \prod_{i=1}^m\ptLip \pi_i(x) \;d\norm{T}(x);
\end{equation}
if $T\in\AKirc_m(X)$ then we have
\begin{equation}\label{eq:strong-mass-def-estimate-rect-ptlowlip}
 |T(f,\pi_1,\dots,\pi_m)|\leq \int_X|f(x)| \prod_{i=1}^m\ptlowlip \pi_i(x) \;d\norm{T}(x).
\end{equation}
\end{lemma}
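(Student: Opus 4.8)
The plan is to reduce everything to the defining mass inequality \eqref{equation:mass-def} by a localization argument, exploiting the locality axiom (ii) of Definition~\ref{def:current} to replace the global Lipschitz constants $\Lipconst(\pi_i)$ by their pointwise counterparts. First I would reduce to the case where $f$ is an indicator function $1_B$ of a Borel set $B$, and in fact, by standard measure-theoretic approximation of $\|T\|$-integrable functions, it suffices to prove both \eqref{eq:strong-mass-def-estimate-ptLip} and \eqref{eq:strong-mass-def-estimate-rect-ptlowlip} with $f$ replaced by $1_B$ for an arbitrary Borel set $B$; then one splits $X$ into pieces where the product $\prod_i \ptLip\pi_i$ (resp. $\prod_i\ptlowlip\pi_i$) is nearly constant and sums. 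So the heart of the matter is: for a Borel set $B$ on which, say, $\prod_{i=1}^m\ptLip\pi_i(x)\leq c$ for all $x\in B$, one wants
\begin{equation*}
 |T(1_B,\pi_1,\dots,\pi_m)|\leq c\,\|T\|(B).
\end{equation*}

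The key step is a further decomposition of $B$ into small pieces on which each $\pi_i$ is \emph{genuinely} Lipschitz with a small constant, modulo pieces where $T$ acts trivially. Fix $\varepsilon>0$. Using the definition of $\ptLip\pi_i$ (or $\ell_r\pi_i$ in the rectifiable case), cover $B$ by countably many Borel sets $B_k$, each of diameter comparable to some small scale, such that on $B_k$ the restriction $\pi_i|_{B_k}$ has Lipschitz constant at most $\ptLip\pi_i(x_k)+\varepsilon$ for a base point $x_k\in B_k$; for the lower pointwise constant one instead picks, for each $x$, a good scale $s$ witnessing $\ell_s\pi_i(x)$ close to $\ptlowlip\pi_i(x)$ and uses a Vitali-type covering. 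Extend $\pi_i|_{B_k}$ to a Lipschitz function $\tilde\pi_i^k$ on all of $X$ with the same (small) Lipschitz constant via the McShane extension. Now $T(1_{B_k},\pi_1,\dots,\pi_m)=T(1_{B_k},\tilde\pi_1^k,\dots,\tilde\pi_m^k)$: indeed the difference, after telescoping $\pi_i-\tilde\pi_i^k$ one index at a time, is a sum of terms whose first argument vanishes outside $B_k$ while one of the remaining Lipschitz arguments $\pi_i-\tilde\pi_i^k$ vanishes on $B_k$ — hence is constant ($=0$) there — so locality axiom (ii), applied with the single Borel set $B_k$, forces each such term to be zero. Then \eqref{equation:mass-def} gives
\begin{equation*}
 |T(1_{B_k},\pi_1,\dots,\pi_m)| = |T(1_{B_k},\tilde\pi_1^k,\dots,\tilde\pi_m^k)| \leq \prod_{i=1}^m\Lipconst(\tilde\pi_i^k)\,\|T\|(B_k) \leq \prod_{i=1}^m\bigl(\ptLip\pi_i(x_k)+\varepsilon\bigr)\|T\|(B_k),
\end{equation*}
and summing over $k$, using that the $B_k$ essentially partition $B$ and that $\ptLip\pi_i$ is nearly constant on each, yields the estimate up to an error controlled by $\varepsilon$ and by $\int_B\prod_i\ptLip\pi_i\,d\|T\|$. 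Letting $\varepsilon\to0$ and then recombining the pieces of the original decomposition of $f$ closes the argument.

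The one genuine subtlety — and the step I expect to be the main obstacle — is the use of axiom (ii) to pass from $\pi_i$ to $\tilde\pi_i^k$: axiom (ii) as stated requires the \emph{first} argument $f$ to be supported in a union of Borel sets on which the \emph{later} arguments $\pi_i$ are constant, so one must be careful that, in the telescoping, the term being killed really has its non-Lipschitz-modified factor constant precisely on the set $B_k$ carrying the support of $1_{B_k}$. This forces the $B_k$ to be Borel and the extension to agree with $\pi_i$ \emph{exactly} on $B_k$, both of which the McShane extension provides. For the rectifiable refinement \eqref{eq:strong-mass-def-estimate-rect-ptlowlip} there is the additional point that $\|T\|$ is concentrated on $\set(T)$, a countably $\haus^m$-rectifiable set, and one uses the fact (from \cite[Theorem 4.6]{Ambrosio-Kirchheim-currents} and \cite{Kirchheim-rectifiable}) that $\|T\|$ is absolutely continuous with respect to $\haus^m\rstr\set(T)$ with a density that lets one perform the Vitali covering at $\|T\|$-almost every point using scales witnessing $\ptlowlip$ rather than $\ptLip$; the lower pointwise constant suffices here precisely because one only needs \emph{one} good small scale at each point, not all small scales. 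Everything else is routine measure theory, and Propositions~\ref{prop:upper-grad-norm-Lipform-cochain} and \ref{prop:upper-grad-norm-lowlip} then follow immediately by taking $f$ to be the coefficient function in Example~\ref{example:lip-form-cochain} and, for the upper gradient, combining with the boundary relation and the fact that $\ptLip f$ (resp.\ $\ptlowlip f$) controls increments of $f$ along the relevant currents.
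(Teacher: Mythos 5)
Your strategy for the $\ptLip$ inequality \eqref{eq:strong-mass-def-estimate-ptLip} is essentially the one the paper uses: decompose the space into Borel sets on which the $\pi_i$ are genuinely Lipschitz with constant close to $\ptLip\pi_i$, replace them by McShane extensions with that small constant, and invoke locality to show the value of $T$ is unchanged, then apply the defining mass bound. Two minor cosmetic differences: the paper reduces to one Lipschitz argument at a time via the measure inequality $\|T\rstr(1,\tau)\|\leq\ptLip\tau\cdot\|T\|$ (so no telescoping is needed), and it cites the strengthened locality property \cite[Theorem 3.5 (iii)]{Ambrosio-Kirchheim-currents} rather than telescoping over the basic axiom (ii) — but your telescoping version is a valid way to obtain the same conclusion, and both ultimately need the extension of $T$ to bounded Borel first arguments since $f$ and $1_{B_k}$ are not Lipschitz. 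So Part 1 of the lemma is fine in your proposal.

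The rectifiable case \eqref{eq:strong-mass-def-estimate-rect-ptlowlip} is where your proposal has a genuine gap. You propose to repeat the covering-and-extension argument using scales witnessing $\ell_s\pi_i(x)\approx\ptlowlip\pi_i(x)$ at each point, aided by a Vitali covering. But the quantity $\ell_s\pi_i(x_k)$ only controls increments of $\pi_i$ \emph{from the center} $x_k$ — it bounds $|\pi_i(x_k)-\pi_i(y)|$ for $y\in B(x_k,s)$, not $|\pi_i(y)-\pi_i(y')|$ for arbitrary $y,y'\in B(x_k,s)$. So $\pi_i|_{B_k}$ is \emph{not} Lipschitz with constant near $\ptlowlip\pi_i(x_k)$, the McShane extension does not exist with that constant, and the locality step has nothing to act on. Having only one good scale per point does not save this; it is exactly the asymmetry between $\ptLip$ (which controls the restricted Lipschitz constant) and $\ptlowlip$ (which controls only radial increments, and only along a subsequence of scales) that makes the lower constant strictly weaker, and that asymmetry is precisely what your Vitali refinement would need to overcome. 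The paper therefore takes a completely different route for this half: by \cite[Theorem 4.5]{Ambrosio-Kirchheim-currents} it reduces to $T=\varphi_\#\Lbrack\theta\Rbrack$ with $\varphi:K\subset\R^m\to X$ biLipschitz, uses Kirchheim's metric differentiability of $\varphi$ and classical Rademacher for $\pi_i\circ\varphi$, proves the pointwise bound $|\det(d_x(\pi\circ\varphi))|\leq\mathbf{J}_m^*(\md\varphi_x)\prod_i\ptlowlip\pi_i(\varphi(x))$ (here the existence of a genuine differential of $\pi_i\circ\varphi$ lets one exploit the good subsequence of scales picked out by $\ptlowlip$), and then finishes via the area formula $\|T\|=\varphi_\#\bigl(|\theta|\mathbf{J}_m^*(\md\varphi)\lm^m\bigr)$ from \cite[Theorem 9.5]{Ambrosio-Kirchheim-currents}. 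Your sketch of the rectifiable case would need to be replaced by an argument of this type.
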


\begin{proof}
We first prove \eqref{eq:strong-mass-def-estimate-ptLip}. For this, it suffices to show that for any $m\geq 1$, any $T\in\AKc_m(X)$, and any $\tau:X\to\R$ Lipschitz
\begin{equation}\label{eq:measure-restriction-ptLip}
 \norm{T\rstr(1,\tau)} \leq \ptLip\tau(\cdot)\,\norm{T}.
\end{equation}
Indeed, for Lipschitz functions $\pi_1, \dots, \pi_m$ on $X$, successive application of \eqref{eq:measure-restriction-ptLip} together with the fact that
\begin{equation*}
 T\rstr(1,\pi_1,\dots, \pi_{k+1}) = (T\rstr(1,\pi_1,\dots, \pi_k))\rstr(1,\pi_{k+1})
\end{equation*}
yields
 \begin{equation*}
  \norm{T\rstr(1,\pi_1,\dots,\pi_m)} \leq \prod_{i=1}^m\ptLip \pi_i(\cdot) \;\norm{T}
 \end{equation*}
and hence
 \begin{equation*}
 \begin{split}
  |T(f, \pi_1,\dots,\pi_m)| &\leq \int_X|f(x)|\, d\norm{T\rstr(1,\pi_1,\dots,\pi_m)}(x)\\
   &\leq  \int_X|f(x)| \prod_{i=1}^m\ptLip \pi_i(x) \;d\norm{T}(x).
   \end{split}
 \end{equation*}
 In order to prove \eqref{eq:measure-restriction-ptLip}, let $r, \varepsilon>0$. Since $\spt T$ is $\sigma$-compact there exists a countable family $(B_i)_{i\in\N}$ of pairwise disjoint Borel sets in $X$ of diameter strictly smaller than $r$ such that $\spt T\subset\cup_iB_i$. Let $\tau\in\Lipspace(X)$ and define for $j\in\N$,
 \begin{equation*}
  A_j:= \{x\in X:  \varepsilon(j-1)\leq L_r\tau(x)< \varepsilon j\}.
 \end{equation*}
 Note that the $A_j$ are Borel sets and pairwise disjoint. It is clear that $\tau|_{B_i\cap A_j}$ is $\varepsilon j$-Lipschitz. By Mc-Shane's extension theorem there thus exists an $\varepsilon j$-Lipschitz extension $\bar{\tau}_{i,j}$ of $\tau|_{B_i\cap A_j}$ to all of $X$. Given  $(f,\pi_1,\dots,\pi_{m-1})\in \Lipspace_b(X)\times\Lipspace^{m-1}(X)$ with $\Lipconst(\pi_k)\leq 1$ for all $k$, it follows from the strengthened locality property \cite[Theorem 3.5 (iii)]{Ambrosio-Kirchheim-currents} that
\begin{equation*}
 \begin{split}
 |T(f1_{B_i\cap A_j}, \tau, \pi_1,\dots,\pi_{m-1})| &= |T(f1_{B_i\cap A_j}, \bar{\tau}_{i,j}, \pi_1,\dots,\pi_{m-1})|\\
  &\leq \Lipconst(\bar{\tau}_{i,j}) \int_{B_i\cap A_j}|f|\;d\norm{T}\\
  \end{split}
\end{equation*}
and thus
\begin{equation*}
 \begin{split}
   |T(f,\tau, \pi_1,\dots, \pi_{m-1})| &\leq \sum_j\sum_i |T(f1_{B_i\cap A_j}, \tau, \pi_1,\dots, \pi_{m-1})|\\
    &\leq \sum_j \int_{A_j}|f(x)|\,(L_r\tau(x) +\varepsilon)\;d\norm{T}(x)\\
    &= \int_X|f(x)|\,L_r\tau(x)\;d\norm{T}(x) + \varepsilon\int_X|f|\;d\norm{T}. 
  \end{split}
\end{equation*}
Since $r, \varepsilon>0$ were arbitrary it follows together with dominated convergence that 
\begin{equation*}
 |T(f,\tau, \pi_1,\dots, \pi_{m-1})| \leq \int_X |f(x)|\ptLip\tau(x)\;d\norm{T}(x),
\end{equation*}
which proves \eqref{eq:measure-restriction-ptLip} and thus \eqref{eq:strong-mass-def-estimate-ptLip}.

We now prove \eqref{eq:strong-mass-def-estimate-rect-ptlowlip}. For this, suppose $T\in\AKirc_m(X)$. By \cite[Theorem 4.5]{Ambrosio-Kirchheim-currents} we may assume without loss of generality that $T= \varphi_{\#}\Lbrack \theta\Rbrack$ for some biLipschitz map $\varphi: K\to X$ with $K\subset\R^m$ compact and $\theta\in L^1(K,\Z)$. 
View $X$ as a subset of $\ell^\infty(X)$ and let $\bar{\varphi}:\R^m\to \ell^\infty(X)$ be a Lipschitz extension of $\varphi$. Set $\pi:= (\pi_1,\dots,\pi_m)$ and let $\bar{\pi}: \ell^\infty(X)\to\R^m$ be a Lipschitz extension of $\pi$. It follows from \cite{Kirchheim-rectifiable} that for almost every Lebesgue point $x\in K$ the metric derivative $$\md\bar{\varphi}_x(v):= \lim_{r\to 0}\frac{d(\bar{\varphi}(x+rv), \bar{\varphi}(x))}{r}$$  exists for all $v\in\R^m$, is a norm on $\R^m$, and is independent of the choice of extension. We can therefore write $\md\varphi_x$ instead of $\md\bar{\varphi}_x$. By the classical Rademacher theorem $\bar{\pi}\circ\bar{\varphi}$ is differentiable at almost every Lebesgue point $x\in K$ and is independent of the choice of extensions. We can therefore write $d_x(\pi\circ\varphi)$ instead of $d_x(\bar{\pi}\circ\bar{\varphi})$. We thus obtain from an easy computation that for almost every $x\in K$
\begin{equation*}
 |\det(d_x(\pi\circ\varphi))| \leq \mathbf{J}_m^*(\operatorname{md}\varphi_x) \prod_{i=1}^m\ptlowlip \pi_i(\varphi(x)),
\end{equation*}
where $\mathbf{J}_m^*(\operatorname{md}\varphi_x)$ is given by
\begin{equation*}
 \mathbf{J}_m^*(\operatorname{md}\varphi_x):= \sup\left\{\det((L_1,\dots,L_m)): \text{ $L_i: (\R^m, \md\varphi_x)\to\R$ linear, $1$-Lip.}\right\}.
\end{equation*}
It follows that
\begin{equation*}
\begin{split}
 |T(f,\pi_1,\dots,\pi_m)| &= \left| \int_K \theta f\circ\varphi  \det(d_x(\pi\circ\varphi)) d\lm^m \right|\\
  &\leq \int_K |f\circ\varphi|\left(\prod_{i=1}^m\ptlowlip \pi_i\right)\circ\varphi\, |\theta| \mathbf{J}_m^*(\operatorname{md}\varphi) \,d\lm^m\\
  &= \int_X |f|\prod_{i=1}^m\ptlowlip \pi_i\,d\|T\|.
  \end{split}
\end{equation*}
For the last inequality we used the fact, see \cite[Theorem 9.5]{Ambrosio-Kirchheim-currents},  that $$\|T\| = \varphi_{\#}(|\theta| \mathbf{J}_m^*(\operatorname{md}\varphi) \lm^m).$$
This proves \eqref{eq:strong-mass-def-estimate-rect-ptlowlip} and completes the proof.
\end{proof}

The next result shows that upper gradients of $0$-cochains are exactly upper gradients of functions. 

\begin{proposition}\label{prop:upper-grad-lipfct-0-cochain}
 Let $X$ be a complete metric space and $f:X\to\overline{\R}$ a function. Let $\omega: \AKic_0(X)\to\overline{\R}$ be given by $$\omega(T):= \left\{\begin{array}{ll}T(f) & \spt T \subset \{|f|<\infty\} \\ +\infty &\text{otherwise.}\end{array}\right.$$ Then $\omega$ defines a linear cochain on $\AKic_0(X)$ and a Borel function $g:X\to[0,\infty]$ is an upper gradient of $\omega$ with respect to $\AKic_1(X)$ if and only if $g$ is an upper gradient of $f$.
 \end{proposition}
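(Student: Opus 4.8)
The plan is to prove the two implications separately, using the correspondence between $0$-currents in $\AKic_0(X)$ and finite signed integer combinations of points, and between $1$-currents in $\AKic_1(X)$ and (sums of) Lipschitz curves. First I would verify the easy structural claim that $\omega$ is a linear cochain: if $T = \sum_{i}\theta_i\Lbrack x_i\Rbrack$ and $S = \sum_j \sigma_j \Lbrack y_j\Rbrack$ with $\omega(T),\omega(S)$ both finite, then $\spt(T)\cup\spt(S)\subset\{|f|<\infty\}$, hence $\spt(T+S)\subset\{|f|<\infty\}$ and $\omega(T+S)=(T+S)(f)=T(f)+S(f)$; the sublinearity inequality follows because if $\omega(T+S)$ and $\omega(S)$ are both finite then so is $\omega(T)$ and equality holds, and otherwise the right-hand side is $+\infty$.

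For the ``if'' direction, suppose $g$ is an upper gradient of $f$ in the sense of \cite{HeiKos}. Fix $T\in\AKic_0(X)$ with $T(1)=0$ (so $\omega$ can be nonzero) and $S\in\AKic_1(X)$ with $\bdry S = T$; I may assume $\spt T\subset\{|f|<\infty\}$, as otherwise $\omega(T)=+\infty$ is not what we estimate — but actually in that case I need $\int_X g\,d\|S\| = \infty$, which requires a separate argument, so I should first reduce to the finite case by noting that if some endpoint $x_i$ of $T$ has $|f(x_i)|=\infty$ then any filling $S$ connects $x_i$ to a point where $f$ is finite along a curve, forcing $\int g\,ds=\infty$ along that curve and hence $\int_X g\,d\|S\|=\infty$ via Lemma~\ref{lemma:Lip-curve-current-mass-integral}. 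In the finite case, write $T = \Lbrack b\Rbrack - \Lbrack a\Rbrack$ (reduce to this using linearity and induction on the number of points, pairing positive and negative masses), and use the weak structure of $1$-dimensional integral currents (Lemma~\ref{lem:weak-structure-1-currents} / \cite[Lemma 4.4]{Ambrosio-Wenger}) to decompose $S$, up to arbitrarily small mass error, into curve fragments $c_k{}_\#\Lbrack 1\Rbrack$ whose boundaries sum to $\bdry S$; concatenating a suitable chain of these yields a Lipschitz curve $\gamma$ from $a$ to $b$ with $\int_\gamma g\,ds \le \int_X g\,d\|S\| + \varepsilon$, whence $|\omega(T)| = |f(b)-f(a)| \le \int_\gamma g\,ds \le \int_X g\,d\|S\|+\varepsilon$, and let $\varepsilon\to 0$.

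For the ``only if'' direction, suppose $g$ is an upper gradient of $\omega$ with respect to $\AKic_1(X)$. Let $x,y\in X$ and let $\gamma:[a,b]\to X$ be a rectifiable (hence, after reparametrization, Lipschitz) curve from $x$ to $y$. Apply Lemma~\ref{lemma:Lip-curve-current-mass-integral} to $S := \gamma_\#\Lbrack 1_{[a,b]}\Rbrack\in\AKic_1(X)$: then $\bdry S = \Lbrack y\Rbrack - \Lbrack x\Rbrack =: T\in\AKic_0(X)$, and the upper gradient inequality gives $|f(y)-f(x)| = |\omega(T)| \le \int_X g\,d\|S\| \le \int_a^b g(\gamma(t))|\dot\gamma|(t)\,dt = \int_\gamma g\,ds$ (the case $|f(x)|=\infty$ or $|f(y)|=\infty$ is handled by the convention $\omega(T)=+\infty$, forcing $\int_\gamma g\,ds=\infty$), which is exactly the defining inequality for $g$ to be an upper gradient of $f$.

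The main obstacle is the ``if'' direction: one must extract, from an abstract filling $S\in\AKic_1(X)$ with $\bdry S = \Lbrack b\Rbrack-\Lbrack a\Rbrack$, an actual curve (or curve chain) joining $a$ to $b$ whose length-weighted $g$-integral is controlled by $\int_X g\,d\|S\|$. This is precisely where the weak structure theorem for $1$-dimensional integral currents is essential, together with the mass/length comparison of Lemma~\ref{lemma:Lip-curve-current-mass-integral} and the refinement \eqref{eq:integral-curve-current-Borelfct} giving the integral estimate; I also need care with the bookkeeping that concatenating finitely many curve fragments whose boundary sum equals $T$ always produces a connected path from $a$ to $b$ (a telescoping/Eulerian-path argument on the finite graph of fragment endpoints), and with the $+\infty$ cases so that the cochain convention is respected throughout.
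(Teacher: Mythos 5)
Your \emph{only if} direction (upper gradient of $\omega$ implies upper gradient of $f$) is essentially the paper's argument verbatim, and is fine.

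Your \emph{if} direction, however, takes an unnecessarily circuitous route and contains an unjustified reduction. You propose first to reduce, ``by linearity and induction on the number of points, pairing positive and negative masses,'' to the case $T = \Lbrack b\Rbrack - \Lbrack a\Rbrack$. This step is not justified: to split a filling $S$ with $\bdry S = T_1 + T_2$ into fillings $S_1,S_2$ of $T_1,T_2$ with $\int g\,d\|S_1\| + \int g\,d\|S_2\| \le \int g\,d\|S\|$ one needs a structural decomposition of $S$ --- which is precisely what Lemma~\ref{lem:weak-structure-1-currents} provides --- so the ``reduction by linearity'' is circular. You then propose to decompose $S$ ``up to arbitrarily small mass error, into curve fragments \ldots whose boundaries sum to $\bdry S$'' and to concatenate them via an Eulerian-path argument. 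This misreads Lemma~\ref{lem:weak-structure-1-currents}: the lemma directly yields exactly $N = \mass(\bdry S)/2$ Lipschitz curves $c_i$ with $\bdry S = \sum_i\bigl(\Lbrack c_i(1)\Rbrack - \Lbrack c_i(0)\Rbrack\bigr)$ and the \emph{exact} mass decomposition $\mass(S) = \mass(S') + \sum_i \length(c_i)$ with $\bdry S' = 0$; no $\varepsilon$-error, no concatenation, and no reduction to the two-point case are needed. The paper simply applies Lemma~\ref{lem:weak-structure-1-currents} to the filling $1$-current itself, uses $\|c_{i\#}\Lbrack 1_{[0,1]}\Rbrack\| = c_{i\#}(|\dot c_i|\lm^1)$ from Lemma~\ref{lemma:Lip-curve-current-mass-integral} (available precisely because the mass decomposition forces $\mass(c_{i\#}\Lbrack 1_{[0,1]}\Rbrack)=\length(c_i)$), and sums the one-dimensional upper-gradient inequalities $|f(c_i(1))-f(c_i(0))|\le\int_0^1 g\circ c_i\,|\dot c_i|$ over $i$ to obtain $|\omega(\bdry S)|\le\int g\,d\|S\|$ directly. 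Your plan has the right ingredients (Lemmas~\ref{lemma:Lip-curve-current-mass-integral} and~\ref{lem:weak-structure-1-currents}), but the extra reduction and concatenation steps are either circular or superfluous; dropping them and applying Lemma~\ref{lem:weak-structure-1-currents} to the filling once recovers the paper's clean argument.
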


For the proof of the proposition we need the following weak structure result for integral $1$-currents with non-trivial boundary. 

\begin{lemma}\label{lem:weak-structure-1-currents}
 Let $X$ be a complete metric space and $T\in\AKic_1(X)$ with $\bdry T\not=0$.  Then there exist Lipschitz curves $c_i: [0,1]\to X$, $i=1,\dots, N$, where $N=\mass(\partial T)/2$, such that  $$S:= T - \sum_{i=1}^N c_{i\#}\Lbrack 1_{[0,1]}\Rbrack$$ satisfies $\partial S=0$ and 
 \begin{equation}\label{eq:opt-mass-decomp-1-currents}
  \mass(T) = \mass(S) + \sum_{i=1}^N \length(c_i).
 \end{equation}
 In particular, the curves $c_i$ satisfy $\mass(c_{i\#}\Lbrack 1_{[0,1]}\Rbrack) = \length(c_i)$. 
\end{lemma}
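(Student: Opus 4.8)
The plan is to extract the Lipschitz curves one at a time, peeling off a geodesic-like segment joining two boundary points and arranging that mass is additive at each step. First I would recall from \cite{Ambrosio-Kirchheim-currents} that for $T\in\AKic_1(X)$ the boundary $\partial T$ is an integral $0$-current of even mass, say $\mass(\partial T)=2N$, so we may write $\partial T = \sum_{j=1}^N(\Lbrack q_j\Rbrack - \Lbrack p_j\Rbrack)$ for (not necessarily distinct) points $p_j,q_j\in X$. The key structural input is the decomposition theorem for integral $1$-currents: any $T\in\AKic_1(X)$ can be written as a countable sum $T=\sum_k R_k$ of integral currents induced by injective Lipschitz curves (or loops) with $\mass(T)=\sum_k\mass(R_k)$; this is the metric-space analogue of the classical fact for normal $1$-currents and is available in the Ambrosio--Kirchheim setting (see also \cite{Ambrosio-Wenger}, \cite{Wenger-flatconv}). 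Among the pieces $R_k$, only finitely many have nonzero boundary, and those boundaries must combine to give $\partial T$; by relabeling and concatenating finitely many arcs I can produce, for each $j$, a Lipschitz curve $c_j:[0,1]\to X$ from $p_j$ to $q_j$ which is a subsum of the $R_k$'s, hence satisfies $\mass(c_{j\#}\Lbrack 1_{[0,1]}\Rbrack)=\length(c_j)$ and has mass additive with the remainder.

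Concretely, the steps I would carry out are: (1) apply the decomposition $T=\sum_k R_k$ with additive mass; (2) observe that $\partial T=\sum_k\partial R_k$ and that this is a finite sum since all but finitely many $R_k$ are cycles, so the curve-pieces with boundary form a finite collection $\gamma_1,\dots,\gamma_L$; (3) using that $\partial T$ has the form $\sum_j(\Lbrack q_j\Rbrack-\Lbrack p_j\Rbrack)$, match up the endpoints of the $\gamma_\ell$ into $N$ chains, each running from some $p_j$ to some $q_j$, and concatenate each chain into a single Lipschitz curve $c_j$ (reparametrizing on $[0,1]$); since concatenation of injective arcs meeting only at endpoints keeps mass additive, and since the $R_k$ are mutually mass-additive, we get $\mass(c_{j\#}\Lbrack 1_{[0,1]}\Rbrack)=\length(c_j)$; (4) set $S:=T-\sum_{j=1}^N c_{j\#}\Lbrack 1_{[0,1]}\Rbrack$; then $\partial S = \partial T - \sum_j(\Lbrack q_j\Rbrack-\Lbrack p_j\Rbrack)=0$, and $S$ is the sum of the leftover cycle-pieces, so $\mass(S)+\sum_j\length(c_j)=\sum_k\mass(R_k)=\mass(T)$, which is \eqref{eq:opt-mass-decomp-1-currents}. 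The final clause about $\mass(c_{j\#}\Lbrack 1_{[0,1]}\Rbrack)=\length(c_j)$ then also follows, or alternatively is a direct consequence of \eqref{eq:opt-mass-decomp-1-currents} together with the general inequalities $\mass(c_{j\#}\Lbrack 1_{[0,1]}\Rbrack)\leq\length(c_j)$ from Lemma~\ref{lemma:Lip-curve-current-mass-integral} and subadditivity of mass.

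I expect the main obstacle to be invoking and correctly applying the decomposition theorem for integral $1$-currents in a general complete metric space with the precise additivity of mass, and in particular handling the bookkeeping in step (3): the arcs $\gamma_\ell$ may share endpoints in complicated ways, some pieces may be closed loops attached at a boundary point, and one must argue that the endpoints can indeed be paired so that concatenation yields exactly $N$ curves without destroying mass-additivity (this uses that at an interior junction the two incident arcs are traversed in opposite orientations, so no cancellation of mass occurs). A clean way to sidestep some of this is to note that it suffices to prove the statement for $N=1$ and then induct: if $\partial T\neq 0$ pick one boundary pair $p,q$, use the decomposition to find an injective (or at worst finitely-piecewise) Lipschitz curve $c_1$ from $p$ to $q$ that is a subsum of the $R_k$, set $T':=T-c_{1\#}\Lbrack 1_{[0,1]}\Rbrack$, check $\mass(T)=\mass(T')+\length(c_1)$ and $\mass(\partial T')=\mass(\partial T)-2$, and repeat. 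Everything else — the boundary computation, the mass identity, and the final clause — is then routine.
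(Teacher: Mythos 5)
Your proposal relies on a global decomposition theorem for integral $1$-currents in a complete metric space --- writing $T=\sum_k R_k$ as a countable sum of curve/loop currents with $\mass(T)=\sum_k\mass(R_k)$ --- but this is not established in the references you point to. Ambrosio--Kirchheim prove a local biLipschitz parametrization of the carrying rectifiable set (their Theorem~4.5), not a global mass-additive decomposition into curves; and what \cite{Ambrosio-Wenger}, Lemma~4.4 (the ingredient the paper actually uses) provides is strictly weaker: after isometrically embedding $X$ into a complete length space $\hat X$, for each $\epsilon>0$ one gets Lipschitz curves $c_i^\epsilon:[0,1]\to\hat X$ joining the boundary points, with image in the closed $\epsilon$-neighborhood of $\spt T$, such that $S_\epsilon:=T-\sum_i c^\epsilon_{i\#}\Lbrack 1_{[0,1]}\Rbrack$ is a cycle and $\mass(S_\epsilon)+\sum_i\length(c_i^\epsilon)\leq\mass(T)+\epsilon$. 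Upgrading this approximate statement to an exact splitting is precisely the content of the lemma, so quoting an exact decomposition theorem risks assuming what is to be proved. Even granting it, your step~(3) has gaps: to conclude that only finitely many $R_k$ have nonzero boundary you need additivity of \emph{boundary} mass, not just of mass (otherwise cancellation could hide infinitely many boundary-carrying pieces), and the version you state does not assert this. Your parenthetical about ``opposite orientations at an interior junction'' also misdescribes the mechanism: arcs that continue one another through a junction are traversed in the \emph{same} direction, and mass additivity of the concatenation comes from the additivity already built into the hypothetical decomposition, not from orientation at the junction point.

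The paper's route sidesteps all of this. It applies the approximate decomposition of \cite{Ambrosio-Wenger} with $\epsilon=2^{-n}$ and then observes that near-optimality of the splitting forces $\haus^1\bigl(c_i^n([0,1])\setminus\set(T)\bigr)\leq 2^{-n}$, so that $\haus^1\bigl(\bigcup_n c_i^n([0,1])\bigr)\leq \haus^1(\set(T))+1\leq\mass(T)+1<\infty$. This finite-measure bound is the compactness input: after passing to a subsequence each $(c_i^n)_n$ converges uniformly to a Lipschitz curve $c_i:[0,1]\to X$ with image in $\spt T\subset X$, and then lower semicontinuity of mass under weak convergence together with lower semicontinuity of length under uniform convergence gives $\mass(S)+\sum\length(c_i)\leq\mass(T)$; the reverse inequality is just subadditivity, yielding \eqref{eq:opt-mass-decomp-1-currents}. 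If you want to salvage your approach you would need to supply (or precisely cite, with the correct form of additivity) an exact decomposition theorem for metric integral $1$-currents and then handle the endpoint-matching combinatorics carefully; the paper's approximate-decomposition-plus-compactness argument is the more economical route and avoids both difficulties.
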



\begin{proof}
  Let $\hat{X}$ be a complete metric space which is a length space and which contains $X$ isometrically. Let $x_1,\dots, x_N, y_1,\dots, y_N\in X$ be points such that $$\partial T = \sum_{i=1}^N \Lbrack y_i\Rbrack - \sum_{i=1}^N \Lbrack x_i\Rbrack,$$ where $N=\mass(\bdry T)/2$.
 After possibly reindexing the $y_i$ there exist, by \cite[Lemma 4.4]{Ambrosio-Wenger}, Lipschitz curves $c_i^n:[0,1]\to \hat{X}$ with fixed Lipschitz constant and image in the closed $\frac{1}{2^n}$-neighbor\-hood $N(\spt T, 1/2^n)$ of $\spt T$, where $i=1,\dots, N$ and $n\geq 1$, such that $c_i^n(0) = x_i$, $c_i^n(1)=y_i$, and such that $S_n:= T- \sum_{i=1}^N c^n_{i\#}\Lbrack 1_{[0,1]}\Rbrack$ satisfies
 $$\mass(S_n) + \sum_{i=1}^N \length(c_i^n) \leq \mass(T) + \frac{1}{2^n}.$$ 
 Note that $\partial S_n = 0$ for every $n$.
 We now claim that $$\haus^1\left(c_i^n([0,1])\backslash \set(T)\right) \leq \frac{1}{2^n}$$ for every $n\geq 1$ and every $i=1,\dots, N$. Indeed, we compute
 \begin{equation*}
  \begin{split}
   \mass(T) &=\|T\|(\set(T))\\
       &\leq \mass(S_n) + \sum_{i=1}^N\length(c_i^n) -  \sum_{i=1}^N \length(c^n_i|_{\{t:c_i^n(t)\not\in\set(T)\}}) \\
       &\leq \mass(T) + \frac{1}{2^n} - \sum_{i=1}^N\haus^1(c_i^n([0,1])\backslash \set(T)),\\
  \end{split}
 \end{equation*} 
 which establishes the claim. It now follows that for every $i$ we have $$\haus^1\left(\bigcup_{n=1}^\infty c_i^n([0,1])\right)\leq \haus^1(\set(T)) + 1 \leq \mass(T) + 1<\infty.$$ This in turn is easily seen to imply that, after possibly passing to a subsequence, for each $i$ the sequence $(c_i^n)$ converges uniformly to a Lipschitz curve $c_i: [0,1]\to X$. Set $S:= T - \sum_{i=1}^N c_{i\#}\Lbrack 1_{[0,1]}\Rbrack$ and note that $\partial S=0$ and that $S_n$ converges weakly to $S$; hence $$\mass(S) + \sum_{i=1}^N \length(c_i) \leq \liminf_{n\to\infty}\mass(S_n) + \sum_{i=1}^N \liminf_{n\to\infty}\length(c^n_i)\leq \mass(T).$$ This completes the proof.
\end{proof}

We can now prove Proposition~\ref{prop:upper-grad-lipfct-0-cochain} as follows.

\begin{proof}[Proof of Proposition~\ref{prop:upper-grad-lipfct-0-cochain}]
We first note that $T(f)$ is well-defined for any function $f:X\to\overline{\R}$ and any $T\in\AKic_0(X)$ such that $\spt T\subset\{|f|<\infty\}$ because of the special form \eqref{eq:int-curr-0-dim} of $0$-dimensional integer rectifiable currents. It follows that $\omega$ is well-defined; it is furthermore clear that $\omega$ defines a linear cochain on $\AKic_0(X)$. 
Now, suppose that $g$ is an upper gradient of $\omega$ with respect to $\AKic_1(X)$. Let $\gamma:[a,b]\to X$ be a rectifiable curve, parameterized by arc-length. Define $T\in\AKic_0(X)$ by $T:= \Lbrack\gamma(b)\Rbrack - \Lbrack\gamma(a)\Rbrack$. It follows that $\partial \gamma_{\#}\Lbrack1_{[a,b]}\Rbrack = T$ and hence $$|f(\gamma(b)) - f(\gamma(a))| = |\omega(T)| \leq \int_X g\,d\|\gamma_{\#}\Lbrack1_{[a,b]}\Rbrack\| \leq \int_a^bg\circ\gamma,$$ where the second inequality is a consequence of Lemma~\ref{lemma:Lip-curve-current-mass-integral} and where we interpret $|f(\gamma(b)) - f(\gamma(a))|$ as $\infty$ in case $|f(\gamma(a))|=\infty$ or $|f(\gamma(b))|=\infty$. This shows that $g$ is an upper gradient of $f$. Suppose now that $g$ is an upper gradient of $f$ and let $T\in\AKic_1(X)$ with $\bdry T\not=0$. Let $c_i: [0,1]\to X$, $i=1,\dots, N$, where $N=\mass(\partial T)/2$, be Lipschitz curves as in Lemma~\ref{lem:weak-structure-1-currents} and set  $$S:= T - \sum_{i=1}^N c_{i\#}\Lbrack 1_{[0,1]}\Rbrack.$$  Note that $\bdry S = 0$ and thus $$\bdry T = \sum_{i=1}^N \Lbrack c_i(1)\Rbrack - \Lbrack c_i(0)\Rbrack.$$ Since $\mass(c_{i\#}\Lbrack 1_{[0,1]}\Rbrack)=\length(c_i)$ it follows furthermore from Lemma~\ref{lemma:Lip-curve-current-mass-integral} that $\|c_{i\#}\Lbrack 1_{[0,1]}\Rbrack\| = c_{i\#}(|\dot{c}_i| \lm^1)$ and hence
 \begin{equation*}
 \begin{split}
   |\omega(\partial T)| &= \left|\sum_{i=1}^N f(c_i(1)) - f(c_i(0))\right|\\
   & \leq \sum_{i=1}^N |f(c_{i}(1)) - f(c_i(0))|\\
   & \leq \sum_{i=1}^N \int_0^1 g\circ c_i |\dot{c}_i| \\
    &= \sum_{i=1}^N \int g d\|c_{i\#}\Lbrack 1_{[0,1]}\Rbrack\|\\
    &\leq \int g\,d\|T\|.
  \end{split}
 \end{equation*}
 This also holds in the case that $|\omega(\bdry T)|=\infty$.
 Since $T$ was arbitrary this shows that $g$ is indeed an upper gradient of $\omega$ with respect to $\AKic_1(X)$. This completes the proof.
\end{proof}


\subsection{Relationship between Hausdorff measure and capacity}\label{section:rel-haus-cap}

The aim of this section is to prove the following result which gives a relationship between the Hausdorff dimension of a set and the capacity of families of currents supported on this set. We will prove further capacity results in the setting of Lie groups in 
Section~\ref{section:lie-haus-cap}. 

\begin{theorem}\label{thm:hausdim-capacity-currents}
 Let $(X, d)$ be a complete metric space and $\mu$ a Borel measure on $X$.  Let $1<p \leq Q<\infty$ and $m\geq 0$. Suppose that 
  \begin{equation*}
  \mu(B(x,r))\leq Cr^Q
 \end{equation*}
 for all $x\in X$ and all $r>0$, where $C>0$ is some fixed number. If  $A\subset X$ is a compact set with $\haus^{Q-p}(A)<\infty$ and if $\Lambda\subset\AKic_m^0(X)$ is a family with $\spt T \subset A$ for every $T\in\Lambda$ then $$\capa_p(\Lambda, \AKic_{m+1}(X))=0.$$
\end{theorem}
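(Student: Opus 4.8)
The plan is to reduce the statement to producing a single admissible test function and then to build that function from two pieces, one living on $A$ and one living in a neighbourhood of $A$. By definition $\capa_p(\Lambda,\AKic_{m+1}(X))=M_p(\Gamma)$ with $\Gamma=\{S\in\AKic_{m+1}(X):\bdry S=T\text{ for some }T\in\Lambda\}$, and by the characterization of zero modulus recorded after Fuglede's lemma it suffices to exhibit a Borel function $g\in L^p(X,\mu)$, $g\geq 0$, with $\int_X g\,d\|S\|=\infty$ for every $S\in\Gamma$ (here one assumes $0\notin\Lambda$, as otherwise $0\in\Gamma$ and the capacity is infinite). First I would observe that $\mu(A)=0$: covering $A$ by balls $B(x_i,r_i)$ with $r_i\leq\delta$ and $\sum_i r_i^{Q-p}\leq\haus^{Q-p}(A)+1$, the bound $\mu(B(x,r))\leq Cr^Q$ gives $\mu(A)\leq C\delta^p(\haus^{Q-p}(A)+1)\to 0$. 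Hence $g_1:=\infty\cdot 1_A$ lies in $L^p$ with $\|g_1\|_p=0$ and satisfies $\int_X g_1\,d\|S\|=\infty$ for every $S\in\Gamma$ with $\|S\|(A)>0$; in particular it disposes of every $S$ that is supported in $A$.

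It remains to treat the $S\in\Gamma$ with $\|S\|(A)=0$ (and $S\neq 0$). The key tool is a density lower bound at boundary points: there is a dimensional constant $c_m>0$ such that for every $S\in\AKic_{m+1}(X)$ with $\bdry S\neq 0$ and for $\|\bdry S\|$-a.e.\ point $y$ one has $\|S\|(B(y,r))\geq c_m r^{m+1}$ for all sufficiently small $r$. For $m\geq 1$ I would prove this by slicing $S$ with $\rho_y=d(\cdot,y)$: for a.e.\ $r$ one has $\bdry(S\rstr B(y,r))=(\bdry S)\rstr B(y,r)+\langle S,\rho_y,r\rangle$, so applying $\bdry$ and using $\bdry\bdry=0$ shows that $\langle S,\rho_y,r\rangle$ bounds $-(\bdry S)\rstr B(y,r)$, whence $\mass\langle S,\rho_y,r\rangle\geq\flatnormFAT((\bdry S)\rstr B(y,r))$; at a point $y$ where $\bdry S$ has an approximate tangent the restriction $(\bdry S)\rstr B(y,r)$ is a nondegenerate $m$-current in $B(y,r)$ whose flat norm is $\gtrsim_m r^{m}$, and since $\mass\langle S,\rho_y,r\rangle\leq\frac{d}{dr}\|S\|(B(y,r))$ by the coarea inequality, integrating gives the claim. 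For $m=0$ it follows directly from Lemma~\ref{lem:weak-structure-1-currents}: $S$ decomposes into Lipschitz curves joining the points of $\spt\bdry S$, and a curve joining $y$ to a distinct point of $\spt\bdry S$ must leave $B(y,r)$, contributing length $\geq r$. Since $\mass(\bdry S)>0$, such good points $y$ exist and lie in $\spt\bdry S\subset A$.

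Finally I would assemble $g_2$ near $A$ and $\int_X g_2\,d\|S\|=\infty$ for the remaining $S$. For each $n\geq 1$ choose a cover of $A$ by balls $B(a_i^n,r_i^n)$ with $a_i^n\in A$, $r_i^n\leq 2^{-n}$ and $\sum_i(r_i^n)^{Q-p}\leq M:=2^{Q-p}(\haus^{Q-p}(A)+1)$, and set $g_2:=\sum_n\lambda_n\,\sup_i (r_i^n)^{-(m+1)}1_{B(a_i^n,3r_i^n)}$ for weights $\lambda_n>0$ to be chosen. If $S$ is as above and $y\in\spt\bdry S$ is a good boundary point, then for every $n$ with $2^{-n}$ below the (positive) threshold attached to $y$ the ball of the $n$-th cover containing $y$ gives $\int_X g_2\,d\|S\|\geq\lambda_n\,(r_{i_0}^n)^{-(m+1)}\,\|S\|(B(y,2r_{i_0}^n))\geq c_m 2^{m+1}\lambda_n$, so $\sum_n\lambda_n=\infty$ forces the integral to be infinite. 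The delicate point --- and the main obstacle --- is the $L^p$-bound: the estimate $\|\sup_i (r_i^n)^{-(m+1)}1_{B(a_i^n,3r_i^n)}\|_p^p\leq 3^QC\sum_i(r_i^n)^{Q-(m+1)p}$ is only directly controlled by $M$ when $m=0$, and in any case the per-scale norms do not tend to $0$ when $\haus^{Q-p}(A)>0$, so naively $\sum_n\lambda_n=\infty$ conflicts with $\|g_2\|_p<\infty$. Resolving this requires using $p>1$ through a logarithmic normalisation of the weights, together with a decomposition of $A$ into countably many pieces on which the covers can be chosen of comparable radii (so that the multiscale sum behaves like an essentially disjoint one) and the countable subadditivity of $M_p$; this balancing of the Hausdorff-efficiency of the covers against the dyadic scale structure is where the real work lies. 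With $g_2$ so constructed, $g:=g_1+g_2$ witnesses $M_p(\Gamma)=0$.
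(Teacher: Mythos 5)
Your approach is genuinely different from the paper's, but it has two real gaps and so does not constitute a proof.

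The paper's route avoids any density estimate on the fillings $S$. It first proves (Proposition~\ref{prop:intersect-path-modulus-hausdorff}) that the family of \emph{rectifiable paths} meeting $A$ has zero $p$-modulus, and then (Proposition~\ref{prop:capacity-intcurr-vs-modulus-curves}) reduces the capacity of $(m+1)$-dimensional fillings to curve modulus: for $m=0$ one decomposes $S$ into Lipschitz curves via Lemma~\ref{lem:weak-structure-1-currents}; for $m\geq 1$ one slices $S$ down to $1$-dimensional integral currents $\langle S,\pi,y\rangle$ whose boundaries lie in $A\cap\pi^{-1}(y)$ and applies the same decomposition to the slices. For curves, the density bound you need is trivial (a curve from $y$ to a point outside $B(y,r)$ has length $\geq r$), which is precisely why the reduction is made.

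The first gap in your argument is the claimed density lower bound $\|S\|(B(y,r))\geq c_m r^{m+1}$ at $\|\partial S\|$-a.e.\ $y$. Your sketch is flawed: from $\partial(S\rstr B(y,r))=(\partial S)\rstr B(y,r)+\langle S,\rho_y,r\rangle$ one gets that $\langle S,\rho_y,r\rangle$ has the \emph{same boundary} (up to sign) as $(\partial S)\rstr B(y,r)$, not that it \emph{fills} it; so there is no inequality $\mass\langle S,\rho_y,r\rangle\geq\flatnormFAT((\partial S)\rstr B(y,r))$. What the slicing identity gives is $\flatnormFAT((\partial S)\rstr B(y,r))\leq\|S\|(B(y,r))+\mass\langle S,\rho_y,r\rangle$, which is the reverse direction. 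Even if you set up the resulting ODE-type argument, you would still need a lower bound $\flatnormFAT((\partial S)\rstr B(y,r))\gtrsim r^m$ in an arbitrary complete metric space, and this is not available without local structure (isoperimetric inequality, etc.). Theorem~\ref{thm:hausdim-capacity-currents} makes no such assumption, and Ambrosio--Kirchheim's density bounds for $\|S\|$ are stated $\|S\|$-a.e., not at the $\|S\|$-null support points where $\partial S$ lives.

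The second gap is the one you already flag: the $L^p$-bound for $g_2$. As you note, with a fixed covering at each scale the per-scale $L^p$-norms are bounded but do not decay, so $\sum_n\lambda_n=\infty$ is incompatible with $\|g_2\|_p<\infty$ unless the scales are genuinely disjoint. The decisive idea in the paper is a nested construction of radii $r_0>2r_1>\dots$ chosen so that $N(A,r_{k+1})\subset D_{r_k}$, which forces the supports of the admissible functions $\rho_k$ (annular bumps cut off inside $D_{r_k}$) to be pairwise disjoint; averaging $\rho^\ell=\ell^{-1}\sum_{k\leq\ell}\rho_k$ then gives $\|\rho^\ell\|_p^p\leq\ell^{1-p}C''\to0$ using $p>1$. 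Your paragraph gestures at this (``decomposition of $A$ into countably many pieces'', ``essentially disjoint'', ``logarithmic normalisation''), but you explicitly leave the construction open (``where the real work lies''). Splitting $A$ into pieces $A_i$ and using subadditivity of $M_p$ also does not obviously work, because a current $T\in\Lambda$ may have support meeting several $A_i$, so its fillings do not decompose compatibly.

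So the proposal identifies the right obstacles but resolves neither; the paper's path/slicing reduction together with the nested-cover trick is what actually closes them.
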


We remark that Theorem~\ref{thm:hausdim-capacity-currents} also holds for $p=1$ if one assumes that the Hausdorff dimension of $A$ satisfies $\dim_{\haus}(A)<Q-1$, see the proof. The proof of Theorem~\ref{thm:hausdim-capacity-currents} is a direct consequence of Propositions~\ref{prop:intersect-path-modulus-hausdorff} and \ref{prop:capacity-intcurr-vs-modulus-curves} below.

Let $E \subset \R^n$ be a set, and let $\Lambda_m(E)$ be the family of all $m$-dimensional Lipschitz surfaces intersecting $E$. Fuglede \cite[II.3]{Fuglede-modulus} has given both necessary and sufficient conditions for the $p$-modulus of $\Lambda_m(E)$ to be zero. His conditions are expressed in terms of capacities of $E$ and, as Fuglede notes, they can be translated to conditions on the Hausdorff dimension of $E$ using the relationship between capacities and Hausdorff dimensions. 

\begin{proposition}\label{prop:intersect-path-modulus-hausdorff}
Let $(X,d)$ be a metric space and $\mu$ a Borel measure on $X$. Let $1<p \leq Q<\infty$. Suppose 
\begin{equation}
\label{measbound}
\mu(B(x,r)) \leq C r^Q 
\end{equation}
for all $x \in X$ and $r>0$, where $C >0$ is some fixed number. If $A \subset X$ is a compact set with $\haus^{Q-p}(A) < \infty$ and if $\Gamma_A$ is the family of all (nonconstant) rectifiable paths intersecting $A$, then $M_p(\Gamma_A)=0$. 
\end{proposition}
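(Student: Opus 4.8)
The statement is a classical-type fact: a set $A$ of finite $(Q-p)$-dimensional Hausdorff measure is $p$-exceptional for the path family hitting it, provided the ambient measure is $Q$-Ahlfors upper regular. The plan is to produce, for each $\varepsilon>0$, an admissible function $f$ for $\Gamma_A$ with $\int_X f^p\,d\mu$ arbitrarily small. First I would fix a small scale and use $\haus^{Q-p}(A)<\infty$ to cover $A$ by countably many balls $B_j=B(x_j,r_j)$ with $r_j<\delta$ and $\sum_j r_j^{Q-p}\leq \haus^{Q-p}_\delta(A)+1\leq \haus^{Q-p}(A)+1=:M$. The natural candidate is a sum of bump functions supported on enlarged balls: set $f:=\sum_j r_j^{-1}\,\mathbf 1_{2B_j}$ (or, to make the path integral cleanly at least $1$ on any path touching $A$, a Lipschitz cutoff that is $r_j^{-1}$ on $B_j$, zero outside $2B_j$, and $1/r_j$-Lipschitz in between, so that the line integral over the subarc crossing from $\partial(2B_j)$ to $x_j$ is at least $1$).

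Next I would check admissibility: any nonconstant rectifiable $\gamma\in\Gamma_A$ meets $A$, hence meets some $B_j$; since $\gamma$ is nonconstant it either has a subarc joining $x_j$ to $\partial(2B_j)$ (length $\geq r_j$), on which the cutoff integrand dominates the corresponding term, giving $\int_\gamma f\,ds\geq 1$; or $\gamma$ is entirely contained in $2B_j$, in which case I would instead handle it by a standard trick — replace $f$ by $2f$ and note a nonconstant path inside $2B_j$ still has positive length and $f\geq r_j^{-1}$ there, but more robustly, one uses that $\Gamma_A$ can be split into paths of diameter $\geq$ some fixed amount (handled as above) plus paths that are small, and the latter are dealt with by a Vitali-type iteration over scales $\delta\to 0$ — alternatively, simply observe the standard fact that it suffices to make $f$ admissible for the subfamily of paths that reach a definite distance from $A$, since $M_p$ of the rest is $0$ by a separate argument. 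I expect the clean route is: show $f$ (times a harmless constant) is admissible for all of $\Gamma_A$ directly using the cutoff construction, since every nonconstant rectifiable path through $B_j$ must travel a distance $\geq r_j$ while staying within $2B_j$ or leaving it, and in either case accumulates integral $\geq 1$ from the $j$-th bump.

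Then I would estimate the energy. By the Ahlfors bound \eqref{measbound}, $\mu(2B_j)\leq C(2r_j)^Q=2^QC\,r_j^Q$, so
$$
\int_X f^p\,d\mu\ \leq\ \sum_j r_j^{-p}\,\mu(2B_j)\ \leq\ 2^QC\sum_j r_j^{Q-p}\ \leq\ 2^QC\,M,
$$
which is uniformly bounded but not yet small. To get smallness I would iterate: intersecting the covers at scale $\delta=2^{-k}$ and taking $f$ to be the minimum (or a normalized average) of the resulting $f_k$, or more simply, use that $\sum_j r_j^{Q-p}$ can be made small once we know $\haus^{Q-p}(A)<\infty$ implies $\haus^{Q-p}_\delta(A)\to\haus^{Q-p}(A)$ is finite — the genuine smallness comes from the standard observation that a set of \emph{finite} $(Q-p)$-measure has, for covers by balls of radius $<\delta$, the truncated sum $\sum r_j^{Q-p+s}\to 0$ as $\delta\to 0$ for any $s>0$; and indeed if $\dim_\haus(A)<Q-p$ we can directly choose covers with $\sum r_j^{Q-p}<\varepsilon$. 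In the borderline case $\haus^{Q-p}(A)<\infty$ one passes to $\haus^{Q-p+\sigma}(A)=0$ for every $\sigma>0$ and runs the same construction with exponent $Q-p+\sigma$, obtaining $f$ with $r_j^{-p+\sigma/?}$... more precisely one uses $f=\sum_j r_j^{-1}\mathbf 1_{2B_j}$ against covers with $\sum r_j^{Q-p}$ controlled and an extra $\log$/iteration argument, exactly as in the classical proof that $\haus^{n-p}(E)<\infty\Rightarrow M_p(\Gamma_E)=0$ (cf. the capacity--Hausdorff comparison referenced via Fuglede). I would therefore split into two cases for the write-up: if $\haus^{Q-p}(A)=0$ it is immediate that $\inf\sum r_j^{Q-p}=0$ and we are done; if $0<\haus^{Q-p}(A)<\infty$, use that then $\haus^{Q-p'}(A)=0$ for every $p'<p$ wait — that fails; instead use the diagonal/series trick summing $f_k$ over dyadic scales with summable weights to force the total energy to be finite while admissibility is preserved, then note $M_p(\Gamma_A)\le \text{(that finite energy)}$ scaled down. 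The cleanest: take $k_0$ large, at scale $2^{-k_0}$ the truncated measure $\haus^{Q-p}_{2^{-k_0}}(A)$ is still $\le M$; put $f = \tfrac{1}{N}\sum_{k=k_0}^{k_0+N-1} f_k$ where each $f_k$ is admissible; then $f$ is admissible (convexity of admissibility), and $\int f^p \le N^{-p}\cdot N \cdot\max_k\int f_k^p\cdot(\text{const})$? — that needs the energies bounded, which they are by $2^QCM$, giving $\int f^p\,d\mu\le 2^QCM\,N^{1-p}\to 0$ as $N\to\infty$ since $p>1$. This final averaging step is the crux and is precisely where the hypothesis $p>1$ is used.

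\textbf{Main obstacle.} The only real subtlety is engineering admissibility for \emph{all} nonconstant rectifiable paths meeting $A$ — in particular very short paths sitting deep inside a single cover ball — and simultaneously driving the $p$-energy to zero; both are resolved by the averaging-over-scales device, which crucially exploits $p>1$ (for $p=1$ one instead needs $\dim_\haus A<Q-1$, as remarked after Theorem~\ref{thm:hausdim-capacity-currents}). Everything else is a routine covering argument using \eqref{measbound}.
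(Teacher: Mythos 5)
Your plan — cover $A$ at a small scale, build bump test functions, then average over $N$ scales to exploit $p>1$ — matches the paper's strategy, and your identification of the subtlety with very short paths (resolved by restricting to paths that leave a definite neighborhood $N(A,R)$ of $A$, together with $\mu(A)=0$) is essentially what the paper does. However, the decisive estimate $\int f^p\,d\mu\le 2^QCM\,N^{1-p}$ for the average $f=\tfrac1N\sum_{k=1}^N f_k$ is asserted without the hypothesis that makes it true. If the $f_k$ have overlapping supports, the best you get from convexity is $\int\bigl(\tfrac1N\sum f_k\bigr)^p\,d\mu\le\tfrac1N\sum\int f_k^p\,d\mu\le\max_k\int f_k^p\,d\mu$, which is bounded but does not decay in $N$. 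The exponent $N^{1-p}$ requires $\bigl(\sum f_k\bigr)^p=\sum f_k^p$ pointwise, i.e.\ \emph{pairwise disjoint supports}. Since the covers at different scales all crowd around $A$, this disjointness does not come for free.

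The paper's proof engineers exactly this. Having fixed $R>0$, it inductively chooses scales $R=r_0>2r_1>2^2r_2>\cdots$ so that $N(A,r_{k+1})\subset D_{r_k}$ where $D_{r_k}$ is the union of the open cover balls at scale $r_k$ (this uses compactness of $A$ and openness of $D_{r_k}$); and it defines $\rho_{r_k}$ to vanish on $D_{r_k}$ and to live only on the annular set $\bigl(\bigcup_j B(x_j,2r_j)\bigr)\setminus D_{r_k}\subset N(A,r_k)\setminus D_{r_k}$. Since $\operatorname{supp}\rho_{r_{k+1}}\subset N(A,r_{k+1})\subset D_{r_k}$ while $\operatorname{supp}\rho_{r_k}\cap D_{r_k}=\emptyset$, the supports are pairwise disjoint, and the $N^{1-p}$ estimate goes through. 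Without some version of this nesting-plus-annulus device the averaged function does not have small $p$-energy, so your proposal as written has a genuine gap at precisely the step you flag as the crux.
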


\begin{proof}
Notice that our assumptions imply $\mu(A)=0$, so that the family of paths inside $A$ has zero $p$-modulus. Thus, by subadditivity of modulus, it suffices to show that 
$M_p (\Gamma_{R})=0$ for every $R>0$, 
where 
$$
\Gamma_R=\{ \gamma \in \Gamma_A:\, |\gamma| \cap X \setminus N(A,R) \neq \emptyset\},
$$
where $|\gamma|$ denotes the image of $\gamma$ and where $N(A,R)$ is defined at the beginning of Section~\ref{sec:notation}.
Fix $R>0$. Let $0<r<R$. Then we find a cover of $A$ by open balls $B(x_j,r_j)$, such that $2r_j \leq r$ for every $j$, and 
\begin{equation}
\label{kkuu}
\sum r_j^{Q-p} \leq C' \haus^{Q-p}(A)+1
\end{equation}
for a constant $C'$ only depending on $Q-p$.
By compactness of $A$, we may choose the cover to be finite; $j=1, \ldots, M(r)$. Denote 
$$
D_{r}=\bigcup_{j=1}^{M(r)} B(x_j,r_j). 
$$
We define $\rho_r$ as follows: 
$$
\rho_r(x)= \max_{j=1, \ldots, M(r)} r_j^{-1} 1_{B(x_j,2r_j)\setminus B(x_j,r_j)}(x)
$$
when $x \in X \setminus D_{r}$, and $\rho_r(x)=0$ otherwise. Then $\rho_r$ is an admissible function for the family $\Gamma_R$. 
Using \eqref{measbound} and \eqref{kkuu}, we have 
$$
\int \rho_r^p \, d\mu \leq \sum_{j=1}^{M(r)} r_j^{-p} \mu(B(x_j,2r_j)) \leq C2^Q \sum_{j=1}^{M(r)} r_j^{Q-p} 
\leq CC' 2^Q(\haus^{Q-p}(A)+1). 
$$

We now define a sequence of positive numbers $R=r_0 > 2r_1 > ...$ inductively. Assume $r_k$ is defined. Then we find a cover of $A$ with $r=r_k$ as above. By compactness of $A$ we can choose $r_{k+1}< 2r_k$ such that $N(A,r_{k+1}) \subset D_{r_k}$. 

Next, applying the above with $r=r_k$, we see that for each $k$ there exists a Borel function 
$\rho_k$ which is admissible for $\Gamma_R$ and satisfies
$$
\int \rho_k^p \, d\mu \leq C'' < \infty, 
$$
where $C''$ does not depend on $k$. Moreover, by construction of $r_k$, the supports of $\rho_k$ are pairwise disjoint. We define $\rho^{\ell}$ by 
$$
\rho^{\ell}= \ell^{-1}\sum_{k=1}^{\ell} \rho_k. 
$$
Then $\rho^{\ell}$ is admissible for $\Gamma_R$, and 
$$
M_p(\Gamma_R) \leq \int (\rho^{\ell})^p \, d \mu \leq \ell^{1-p}C''  
$$
by disjointness of the supports. Therefore, since $p>1$, $M_p(\Gamma_R) \to 0$ as $\ell \to \infty$. The proof is complete. 
\end{proof}

We remark that Proposition~\ref{prop:intersect-path-modulus-hausdorff} also holds for $p=1$ under the stronger assumption that $\dim_{\haus}(A)<Q-1$. Indeed, in this case we may choose $p>1$ such that $\haus^{Q-p}(A)=0$. Let $R>0$ and $\rho^\ell$ be as in the proof above and note that $\rho^\ell$ is supported in $N(A, 2R)$. Thus H\"older's inequality applied to $\rho^\ell$ yields $$\|\rho^\ell\|_1\leq \|\rho^\ell\|_p \,\mu(N(A, 2R))^{\frac{p}{p-1}}\to 0\quad\text{as $\ell\to\infty$.}$$
This shows that $M_1(\Gamma_R)=0$ for every $R>0$ and thus $M_1(\Gamma_A)=0$.

\begin{proposition}\label{prop:capacity-intcurr-vs-modulus-curves}
 Let $X$ be a complete metric space, $\mu$ a Borel measure on $X$, and $m\geq 0$, $p\geq 1$. Let $A\subset X$ be a Borel set and $\Lambda\subset\AKic_m^0(X)$ a family satisfying $\spt T \subset A$ for every $T\in\Lambda$. If the family of (nonconstant) rectifiable paths with end-points in $A$ has zero $p$-modulus (in the usual sense) then $$\capa_p(\Lambda, \AKic_{m+1}(X))=0.$$
 \end{proposition}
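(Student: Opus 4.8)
The plan is to exhibit a single non-negative Borel function $f\in L^p(X,\mu)$ which is \emph{infinitely admissible} for the family $\Gamma:=\{S\in\AKic_{m+1}(X):\bdry S\in\Lambda\}$ defining $\capa_p(\Lambda,\AKic_{m+1}(X))$, i.e.\ one with $\int_X f\,d\norm{S}=\infty$ for every $S\in\Gamma$. Once this is done, $\varepsilon f$ is admissible for $\Gamma$ for each $\varepsilon>0$, so $\capa_p(\Lambda,\AKic_{m+1}(X))=M_p(\Gamma)\le\varepsilon^p\norm{f}_p^p\to0$. To produce $f$, I would use the standing hypothesis $M_p(\Gamma_A)=0$, where $\Gamma_A$ is the family of nonconstant rectifiable curves with end-points in $A$: the standard construction (choose admissible $\rho_k$ with $\norm{\rho_k}_p^p\le 2^{-k}$ and set $f:=\sum_k\rho_k$) yields a non-negative Borel $f\in L^p(X,\mu)$ with $\int_\gamma f\,ds=\infty$ for every $\gamma\in\Gamma_A$. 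We may assume the zero current does not belong to $\Lambda$, so that $\bdry S=T\in\Lambda$ forces $T\ne0$.

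First I would settle the case $m=0$, which carries the geometric content. Let $S\in\AKic_1(X)$ with $T:=\bdry S\in\Lambda$, so $T\ne0$ and $\spt T\subset A$. Lemma~\ref{lem:weak-structure-1-currents} applied to $S$ yields Lipschitz curves $c_1,\dots,c_N$, $N=\mass(T)/2\ge1$, with end-points in $\spt T\subset A$, with $\bdry S=\sum_{i=1}^N(\Lbrack c_i(1)\Rbrack-\Lbrack c_i(0)\Rbrack)$, with $\mass(c_{i\#}\Lbrack 1_{[0,1]}\Rbrack)=\length(c_i)$, and with $\mass(S)=\mass(S_0)+\sum_i\length(c_i)$ where $S_0:=S-\sum_i c_{i\#}\Lbrack 1_{[0,1]}\Rbrack$. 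As $T\ne0$, at least one curve, say $c_1$, has distinct end-points, hence is a nonconstant element of $\Gamma_A$. Since $S$ is the sum of $S_0$ and the currents $c_{i\#}\Lbrack 1_{[0,1]}\Rbrack$ and the total masses add up, subadditivity of the mass measure forces $\norm{S}=\norm{S_0}+\sum_i\norm{c_{i\#}\Lbrack 1_{[0,1]}\Rbrack}\ge\norm{c_{1\#}\Lbrack 1_{[0,1]}\Rbrack}$; together with the equality case of \eqref{eq:integral-curve-current-Borelfct} in Lemma~\ref{lemma:Lip-curve-current-mass-integral} and the defining property of $f$ this gives
\[
\int_X f\,d\norm{S}\ \ge\ \int_X f\,d\norm{c_{1\#}\Lbrack 1_{[0,1]}\Rbrack}\ =\ \int_{c_1}f\,ds\ =\ \infty .
\]

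For $m\ge1$ I would slice down to the previous case. Fix $S\in\AKic_{m+1}(X)$ with $T:=\bdry S\in\Lambda$, $T\ne0$, $\spt T\subset A$. Since $T$ is a nonzero integer rectifiable $m$-current, $\set(T)$ contains a Borel piece $E_0$ with $\norm{T}(E_0)>0$ carrying a bi-Lipschitz chart onto a subset of $\R^m$; extend this chart componentwise (McShane) to a map $u=(u_1,\dots,u_m):X\to\R^m$ with each $u_i$ $1$-Lipschitz. Using that push-forward and slicing are compatible with restriction to Borel sets and with one another — so that $u_{\#}(T\rstr E_0)=\Lbrack\theta\Rbrack$ is a nonzero integral current on $\R^m$ (as $u$ is injective on $E_0$) whose identity-slices $\theta(t)\Lbrack t\Rbrack$ are nonzero on the positive-measure set $\{\theta\ne0\}$ — one obtains a set $E\subset\R^m$ of positive Lebesgue measure on which the $0$-dimensional slices $T_t:=\langle T,u,t\rangle$ do not vanish. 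By the slicing theory of \cite{Ambrosio-Kirchheim-currents}, for a.e.\ $t$ the slice $S_t:=\langle S,u,t\rangle$ is an integral $1$-current with $\bdry S_t=\pm T_t$ and $\spt(\bdry S_t)\subset\spt T\subset A$. Applying the $m=0$ step to $S_t$ for a.e.\ $t\in E$ gives $\int_X f\,d\norm{S_t}=\infty$ on a set of positive measure, while the coarea inequality $\int_{\R^m}\norm{\langle S,u,t\rangle}(B)\,dt\le C(m)\norm{S}(B)$ for Borel $B$ yields $\int_{\R^m}\bigl(\int_X f\,d\norm{S_t}\bigr)\,dt\le C(m)\int_X f\,d\norm{S}$. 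Since $|E|>0$, this forces $\int_X f\,d\norm{S}=\infty$, as required.

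The step I expect to be the main obstacle is this reduction for $m\ge1$: one must choose the slicing map $u$ so that a \emph{positive-measure} set of the $0$-dimensional slices $T_t$ are genuinely nonzero — which is where the rectifiability of $T$ has to be exploited through a chart on $\set(T)$, while controlling possible cancellation between the slices of $T\rstr E_0$ and of $T\rstr(X\setminus E_0)$ — and then one must invoke and align several facts from \cite{Ambrosio-Kirchheim-currents} (slices of integral currents are integral for a.e.\ parameter; slicing commutes with $\bdry$, with push-forward, and with restriction to Borel sets; and the coarea inequality) so that all of them hold for a.e.\ $t\in E$ at once. By comparison the remaining ingredients — Lemma~\ref{lem:weak-structure-1-currents}, Lemma~\ref{lemma:Lip-curve-current-mass-integral}, subadditivity of the mass measure, and the selection of $f$ — are routine.
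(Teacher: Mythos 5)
Your $m=0$ step matches the paper's argument exactly: produce an $f\in L^p(X,\mu)$ that is infinitely admissible for $\Gamma_A$, apply Lemma~\ref{lem:weak-structure-1-currents} to $S$, note that at least one curve is nonconstant because $\partial S=T\neq 0$, and use the mass-equality in \eqref{eq:opt-mass-decomp-1-currents} together with subadditivity of the mass measure to conclude $\|S\|\geq\|c_{1\#}\Lbrack 1_{[0,1]}\Rbrack\|$, hence $\int_X f\,d\|S\|=\infty$. This is the paper's proof, made slightly more explicit.

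The $m\geq 1$ step, however, differs from the paper's and contains the gap you yourself flag, which you should not leave unresolved. Your plan is to locate a bi-Lipschitz piece $E_0\subset\set(T)$, push $T\rstr E_0$ forward to $\R^m$, and read off a positive-measure set of parameters on which the slices $\langle T\rstr E_0,u,t\rangle$ are nonzero; but the slices you actually need to be nonzero are $\langle T,u,t\rangle=\langle T\rstr E_0,u,t\rangle+\langle T\rstr(X\setminus E_0),u,t\rangle$, and nothing in your argument rules out cancellation. (One can try to argue the two summands have essentially disjoint supports for a.e.\ $t$ and therefore cannot cancel, but that requires its own justification about slicing commuting with Borel restriction and about where slices are concentrated, which you would still have to supply.) The paper avoids this entirely and more cheaply: since $T\neq 0$, by the very definition of a metric current there exist Lipschitz $\pi_1,\dots,\pi_m$ (which may be taken $1$-Lipschitz) with $T\rstr(1,\pi)\neq 0$. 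Then \cite[Theorem~5.6]{Ambrosio-Kirchheim-currents} gives the exact mass formula
\[
\mass\bigl(T\rstr(1,\pi)\bigr)=\int_{\R^m}\mass\bigl(\langle T,\pi,y\rangle\bigr)\,dy>0,
\]
so the set $K=\{y:\langle T,\pi,y\rangle\neq 0\}$ automatically has positive measure — no chart on $\set(T)$ and no cancellation analysis is needed. From there the paper proceeds as you do: $\langle S,\pi,y\rangle\in\AKic_1(X)$ with $\bdry\langle S,\pi,y\rangle=(-1)^m\langle T,\pi,y\rangle\neq 0$ supported in $A$ for a.e.\ $y\in K$, so by the $m=0$ step $\int_X f\,d\|\langle S,\pi,y\rangle\|=\infty$ on a positive-measure set, and
\[
\int_X f\,d\|S\|\ \geq\ \int_X f\,d\|S\rstr d\pi\|\ \geq\ \int_{\R^m}\int_X f\,d\|\langle S,\pi,y\rangle\|\,dy\ =\ \infty,
\]
using the slicing mass identity (with constant $1$, not just a coarea inequality with $C(m)$). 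I'd recommend you replace your chart-based reduction with this existence-of-$\pi$ argument: it removes the obstacle you identify, and also eliminates the appeal to bi-Lipschitz structure of $\set(T)$, which is unnecessary here.
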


\begin{proof}
 Denote by $\Gamma_A$ the family of nonconstant rectifiable paths with end-points in $A$. Since $M_p(\Gamma_A)=0$ there exists a Borel function $f\in L^p(X,\mu)$ with $f\geq 0$ and such that $$\int_\gamma f = \infty$$ for every $\gamma\in\Gamma_A$. Now, let $T\in \Lambda$ with $T\not=0$ and let $S\in\AKic_{m+1}(X)$ with $\partial S = T$. Suppose first that $m=0$. Let $c_i$ be Lipschitz curves as in Lemma~\ref{lem:weak-structure-1-currents}  for $S$, and
 denote by $c:[0,a]\to X$ the arc-length parameterization of $c_1$.  It follows from Lemmas~\ref{lemma:Lip-curve-current-mass-integral}  and \ref{lem:weak-structure-1-currents} that 
 \begin{equation*}
  \int_X f(x)d\|S\|(x) \geq \int_Xf(x)d\|c_{\#}\Lbrack 1_{[0,a]}\Rbrack\|(x) = \int_c f = \infty.
 \end{equation*}
Since $S$ was arbitrary it follows that $\capa_p(\Lambda, \AKic_{m+1}(X))=0$ in the case $m=0$. Now, suppose that $m\geq 1$.
Since $T\not=0$ there exists a Lipschitz map $\pi:X\to\R^m$ such that $T\rstr (1,\pi)\not=0$. We may assume that each component of $\pi$ is $1$-Lipschitz. By \cite[Theorem 5.6]{Ambrosio-Kirchheim-currents} we have
 \begin{equation*}
  \mass(T\rstr (1,\pi)) = \int_{\R^m} \mass(\langle T, \pi, y\rangle) dy,
 \end{equation*}
 where $\langle T, \pi, y\rangle$ denotes the slice of $T$ with respect to the map $\pi$ at $y$, see \cite{Ambrosio-Kirchheim-currents}.
 Thus there exists a measurable set $K\subset\R^m$ of strictly positive measure such that $\langle T, \pi, y\rangle\not=0$ for every $y\in K$. By \cite[Theorem 5.7]{Ambrosio-Kirchheim-currents} we may assume that $\langle S, \pi, y\rangle\in\AKic_1(X)$ and that $\bdry \langle S, \pi, y\rangle = (-1)^m \langle T, \pi, y\rangle$ is supported in $A\cap \pi^{-1}(\{y\})$ for every $y\in K$. Fix $y\in K$ and let $c_i$ be Lipschitz curves as in Lemma~\ref{lem:weak-structure-1-currents}  for $\langle S, \pi, y\rangle$. Let $c:[0,a]\to X$ be the arc-length parameterization of $c_1$.  Lemmas~\ref{lemma:Lip-curve-current-mass-integral}  and \ref{lem:weak-structure-1-currents} give 

 \begin{equation*}
  \int_X f(x)d\|\langle S, \pi, y\rangle\|(x) \geq \int_Xf(x)d\|c_{\#}\Lbrack 1_{[0,a]}\Rbrack\|(x) = \int_c f = \infty
 \end{equation*}
 for every $y\in K$, and since $K$ has strictly positive measure, it follows that
 \begin{equation*}
   \int_X fd\|S\| \geq \int_Xfd\|S\rstr d\pi\|
    \geq \int_{\R^m}\int_Xf(x)d\|\langle S, \pi, y\rangle\|(x)dy
    =\infty.
 \end{equation*}
 Since $S$ was arbitrary it follows that $\capa_p(\Lambda, \AKic_{m+1}(X))=0$.
\end{proof}


\section{Cochains in Lie groups} \label{part2}

\subsection{Statement of the main H\"older continuity estimates for cochains} \label{part21}

Let $G$ be a Lie group of dimension $n$, endowed with a left-invariant Finsler metric and the Hausdorff $n$-measure. Let $0\leq m\leq n-1$ and let $\mathcal{C} = (\mathcal{C}_m, \mathcal{C}_{m+1})$ with either $\mathcal{C}_k = \AKnc_k(G)$ or $\mathcal{C}_k=\AKic_k(G)$ for $k=m, m+1$. One of the principal aims of this paper is to give H\"older type estimates for $|\omega(T)|$ in terms of the flat norm or the filling volume of $T\in\mathcal{C}_m$. For this we will have to impose, in Theorems~\ref{hoeldercont2} and \ref{hoeldercont1} below, growth conditions on $T$ of the form
\begin{equation}\label{eq:growth-mass-T}
  \int_G\|T\|(B(z,r))^{\frac{1}{p-1}}d\|T\|(z)\leq A^{\frac{1}{p-1}} r^{\frac{\alpha}{p-1}}\mass(T)
  \end{equation}
  and
  \begin{equation}\label{eq:growth-mass-bdry-T}
\int_G\|\bdry T\|(B(z,r))^{\frac{1}{q-1}}d\|\bdry T\|(z)\leq B^{\frac{1}{q-1}} r^{\frac{\beta}{q-1}}\mass(\bdry T)
 \end{equation}
for suitable $p,q>1$, $\alpha,\beta>0$, and $r\geq 0$ and for some $A,B>0$. We remark that if
\begin{equation*}
 \|T\|(B(z,r))\leq A r^\alpha \quad\text{ and }\quad \|\bdry T\|(B(z,r))\leq B r^\beta
\end{equation*}
for every $z\in G$ then $T$ satisfies \eqref{eq:growth-mass-T} and \eqref{eq:growth-mass-bdry-T} for any $p,q>1$. Note that, by the convention established in Section~\ref{sec:metriccurrents}, inequality \eqref{eq:growth-mass-bdry-T} is an empty condition if $m=0$. We remark furthermore that there exist easy examples of currents $T$ which do not satisfy \eqref{eq:growth-mass-T} for any $\alpha$ and $p>1$. Indeed, let $n>m\geq 1$ and define $T\in\AKic^0_m(\R^n)$ by 
$$
T=\sum_{j=1}^\infty Q_j \varphi_{\#}(\bdry\Lbrack 1_{B_j}\Rbrack),  
$$
where $Q_j$ is the largest integer smaller than $2^{jm}j^{-2}$,  the map $\varphi: \R^{m+1}\to \R^n$ is given by $\varphi(x_1,\dots,x_{m+1}):= (x_1,\dots, x_{m+1}, 0,\dots,0)$, and $B_j\subset\R^{m+1}$ denotes the ball of radius $2^{-j}$ centered at $0$. Clearly, inequality  \eqref{eq:growth-mass-T} does not hold for any exponents $\alpha$ and $p>1$.

The main results of our paper can be stated as follows. In our first result we assume $\bdry T=0$.
 
\begin{theorem}\label{hoeldercont2}
Let $G$ and $m$ be as above and let $T\in\AKic_m^0(G)$. Suppose that either $G$ is a normed space or $\fillvol(T)\leq 1$. Suppose furthermore that there exist $A\geq 1$, $\alpha\in[0,m]$, $p> n-\alpha$ such that $T$ satisfies \eqref{eq:growth-mass-T} for all $r\geq 0$. If $\omega\in\mathcal{W}_p(\AKic_m^0(G))$ and $\omega$ has upper gradient $g$ then we have
\begin{equation}\label{eq:fillvol-hoelder-bound-G}
|\omega(T)|\leq E\Lambda(T)\, \fillvol(T)^{1-n/(p+\alpha)} \norm{g}_{p, N(\spt T, s_0)},
\end{equation}
with $$\Lambda(T) = \left(1 + \frac{p}{p+\alpha - n}\right)\left[A\mass(T)^{(p-1)}\right]^{n/(p(p+\alpha))},$$ and
$$s_0= E\cdot \fillvol(T)^{1/(m+1)}\left[1 + A^{-1/(p+\alpha)}\mass(T)^{\theta}\right],$$
where $E$ depends only on $m$, $G$, and the left-invariant Finsler metric on $G$, and where, moreover, $\theta=(1-\alpha/m)/(p+\alpha)$ if $m \geq 1$ and $\theta=(1-p)/p$ if $m=0$.  
\end{theorem}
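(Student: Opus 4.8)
\textbf{Proof strategy for Theorem~\ref{hoeldercont2}.}

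The plan is to prove the estimate via a Morrey-type iteration on a sequence of fillings of $T$ at successively finer scales, combined with the controlled-deformation machinery that lets us push currents down to small scale in $G$. First I would fix a near-optimal filling $S_0\in\AKic_{m+1}(G)$ with $\bdry S_0 = T$ and $\mass(S_0)$ close to $\fillvol(T)$. The key geometric input is that, using a controlled family of curves on $G$ (a left-invariant family obtained from the exponential map, as set up in Section~\ref{part22}), one can deform $T$ by a homotopy onto a ``small'' current: writing $\psi$ for the associated Lipschitz homotopy and applying Lemma~\ref{lemma:mass-estimate-push-cone} to $\psi_\#([0,1]\times T)$, one obtains a new filling of $T$ minus its image, whose mass is controlled by $\int g\,d\|\psi_\#([0,1]\times T)\|$, hence — after the mass estimate — by a weighted integral of $g$ against $\|T\|$ over a neighbourhood of $\spt T$ of controlled radius. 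The growth hypothesis \eqref{eq:growth-mass-T} enters precisely here: it converts $\int_G \|T\|(B(z,r))^{1/(p-1)}\,d\|T\|(z)$ into the factor $A^{1/(p-1)}r^{\alpha/(p-1)}\mass(T)$, which after a H\"older application against $\|g\|_p$ produces a power $r^{\,n/p - 1 + \alpha/p}$ (roughly) of the deformation scale $r$.

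Next I would run the iteration. Set up a geometric sequence of scales $r_j = 2^{-j}r_0$ with $r_0$ comparable to $\fillvol(T)^{1/(m+1)}$, and at each step write $\omega(T)$ as a telescoping sum: $T$ is filled by $S_0$, but also $T$ differs from its scale-$r_{j}$ deformation by $\bdry$ of a homotopy current of small mass, so $\omega(T)$ splits into the cochain evaluated on the ``cone part'' (bounded by $\int g$ over the homotopy, which by the above is $\lesssim r_j^{\,(n+\alpha)/p - 1}A^{1/p}\mass(T)^{(p-1)/p}\|g\|_p$) plus $\omega$ of the deformed current at the next scale. Because $p > n-\alpha$ the exponent $(n+\alpha)/p - 1$ is negative... wait — it must be arranged so the geometric sum converges; indeed $1 - n/(p+\alpha) > 0$ is the surviving H\"older exponent, and the per-step contributions form a convergent geometric series whose sum is $\lesssim \fillvol(T)^{1-n/(p+\alpha)}\|g\|_p$ times the stated prefactor $\Lambda(T)$. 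The factor $1 + \frac{p}{p+\alpha-n}$ in $\Lambda(T)$ is exactly the sum of that geometric series, and the $[A\mass(T)^{p-1}]^{n/(p(p+\alpha))}$ piece is the accumulated $A$- and mass-dependence coming from balancing the H\"older exponents at each scale. The radius $s_0$ of the neighbourhood of $\spt T$ over which $\|g\|_p$ is taken is simply the total displacement of all the deformations, $\sum_j (\text{scale}_j)\times(\text{Lipschitz data})$, which sums to the displayed expression; the term $A^{-1/(p+\alpha)}\mass(T)^\theta$ records the choice of initial scale $r_0$ relative to $A$ and $\mass(T)$, with the two cases $m\geq 1$ and $m=0$ reflecting whether there genuinely is an $(m+1)$-dimensional filling to cone over or whether $T$ is a $0$-current filled by curves.

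The hypothesis that $G$ is a normed space or that $\fillvol(T)\leq 1$ is used to guarantee a linear isoperimetric inequality (an $(m+1)$-filling of controlled mass for small $m$-cycles) uniformly at all the scales $r_j$ — in $\R^n$ this is global, but in a general Lie group with left-invariant Finsler metric it is only local, hence the filling-volume normalisation. I would invoke the isoperimetric/filling results of \cite{Wenger-isop-Eucl} (and its Lie-group analogue) here. \textbf{The main obstacle} I expect is making the deformation genuinely \emph{controlled} and quantitative: one needs the homotopy $\psi$ to be Lipschitz in both variables with constants depending only on $G$ and the metric (not on $T$), to keep the deformed current integral rectifiable with boundary still zero, and to track that the cone current's boundary is exactly $T$ minus the deformed $T$ so the telescoping is exact. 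This is where the ``controlled family of curves'' definition from Section~\ref{part22} does the real work, and verifying that the estimate of Lemma~\ref{lemma:mass-estimate-push-cone} feeds cleanly into the growth condition \eqref{eq:growth-mass-T} — rather than into a cruder $\|T\|(B)\leq Ar^\alpha$ bound — is the delicate bookkeeping step. Finally, assembling the constant $E$ amounts to collecting the (finitely many) geometric constants from the isoperimetric inequality, the curve family, and Lemma~\ref{lemma:mass-estimate-push-cone}, all of which depend only on $m$, $G$, and the Finsler metric, as claimed.
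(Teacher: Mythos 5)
Your proposal has the right geometric ingredients (the controlled family of curves, Lemma~\ref{lemma:mass-estimate-push-cone}, the isoperimetric inequality for choosing the filling), but it misses the central mechanism of the paper's proof and contains a structural gap.

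The crucial missing idea is the \emph{averaging over right-translates}. Since $\omega$ has an upper gradient $g$, one only knows that for a \emph{single} filling $S$ of $T$ one has $|\omega(T)|\le\int g\,d\|S\|$. There is no way to convert this to a bound by $\|g\|_p$ for a single $S$: the integral $\int g\,d\|S\|$ could be infinite. The paper resolves this by introducing the averaged quantity $\omega_+(T,r)=\frac{1}{\haus^n(B(e,r))}\int_{B(e,r)}|\omega(\varphi_{x\#}T)|\,d\haus^n(x)$ and comparing $|\omega(T)|$ to it. Proposition~\ref{proposition:avg-omega-estimate-curr} bounds $\omega_+(T,r)$ by applying the upper gradient inequality to translated fillings $\varphi_{x\#}V$ and then averaging in $x$; Proposition~\ref{proposition:tech-avg-diff-pt-omega-estimate} bounds $|\,|\omega(T)|-\omega_+(T,r)\,|$ by filling $\varphi_{x\#}T - T$ with the homotopy current $\psi_{x\#}([0,1]\times T)$ and again averaging in $x$. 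In both steps the Fubini/change-of-variables lemmas (Lemma~\ref{lemma:fubini-G}, Propositions~\ref{proposition:elem-upper-G} and \ref{proposition:growth-integration-variant}) convert the average of $\int g\,d\|\cdot\|$ into an honest $L^p$ norm $\|g\|_p$ against $\haus^n$; the growth condition \eqref{eq:growth-mass-T} is used precisely inside this averaging, and nowhere else. Your proposal deforms $T$ directly without averaging, and the claim that Hölder plus \eqref{eq:growth-mass-T} then yields a $\|g\|_p$ bound does not go through — you need the extra integration over $x\in B(e,r)$ to make Proposition~\ref{proposition:growth-integration-variant} applicable.

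The second issue is structural: the paper does \emph{not} run a dyadic iteration. It fixes a single scale $r$, obtains the two estimates above, adds them, and then optimizes over $r$ (taking $r\sim\left[\fillvol(T)^pA^{-1}\mass(T)^{1-p}\right]^{1/(p+\alpha)}$). The factor $1+\frac{p}{p+\alpha-n}$ in $\Lambda(T)$ is not the sum of a geometric series; it is simply the sum of the two coefficients — one from the filling-average estimate and one from the integral $\int_0^1 t^{(\alpha-n)/p}\,dt=\frac{p}{p-n+\alpha}$ appearing in Proposition~\ref{proposition:growth-integration-variant}. Similarly $s_0$ is not a sum of displacements across scales; it is the single displacement $Dr$ plus the radius $D\fillvol(T)^{1/(m+1)}$ of the neighbourhood supporting the near-optimal filling (which is where the hypothesis ``$G$ normed or $\fillvol(T)\le 1$'' and the isoperimetric inequality enter, via Proposition~\ref{proposition:avg-omega-estimate-curr}(iii)). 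A dyadic telescoping would also be problematic because the intermediate ``deformed currents'' at each scale are images under non-isometric maps and it is unclear how to control their fillings and growth uniformly. You would need to rework the exponent bookkeeping and, even so, still be forced to introduce averaging to close the argument.
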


Note that $N(\spt T, s_0)$ is defined at the beginning of Section~\ref{sec:notation}. The main principle behind Theorem \ref{hoeldercont2} is the following: the existence of a $p$-integrable upper gradient should force $\omega$ to be continuous with respect to the filling distance when $p$ is large enough, the same way as a Sobolev function with $p$-integrable gradient has to be continuous when $p>n$. However, in order for this principle to work we have to restrict $\omega$ to currents with controlled local growth, and the statement is therefore a bit technical. We give a simple corollary of Theorem \ref{hoeldercont2} to illustrate. Let $\omega \in \mathcal{W}_p(\AKic_m^0(\R^n))$ be as in the theorem. The theorem then implies that if $p>n-m$ then the restriction of $\omega$ to the class $\mathcal{S}_m$ of oriented $m$-dimensional spheres is continuous with respect to the filling distance; if 
$\fillvol(S_j -S)  \to 0$ for $S_j$, $S \in \mathcal{S}_m$, then $\omega(S_j -S) \to 0$. 

Theorem~\ref{hoeldercont2} together with Proposition~\ref{prop:upper-grad-lipfct-0-cochain} implies the following version of the Morrey-Sobolev inequality for Sobolev functions.
\begin{corollary}\label{cor:Morrey-Sobolev}
 Let $G$ be a Lie group of dimension $n$, endowed with a left-invariant Finsler metric and the Hausdorff $n$-measure. Let $u:G\to\overline{\R}$ be a function which has an upper gradient $g\in L^p(G)$ for some $p>n$. Then for all $x,y\in G$ with $d(x,y)\leq 1$ we have 
\begin{equation*}
 |u(x) - u(y)|\leq C d(x,y)^{1-n/p}\norm{g}_{p, B(x, Cd(x,y))},
\end{equation*}
where $C$ only depends on $p$, $G$, and the left-invariant Finsler metric.
\end{corollary}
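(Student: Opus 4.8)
The plan is to specialize Theorem~\ref{hoeldercont2} to $m=0$, after identifying $u$ with a $0$-dimensional cochain via Proposition~\ref{prop:upper-grad-lipfct-0-cochain}. First I would dispose of the trivial case $x=y$ and assume $x\neq y$. Let $\omega:\AKic_0(G)\to\overline{\R}$ be the linear cochain induced by $u$ as in Proposition~\ref{prop:upper-grad-lipfct-0-cochain}. Since $g$ is an upper gradient of the function $u$, that proposition guarantees that $g$ is an upper gradient of $\omega$ with respect to $\AKic_1(G)$; consequently the restriction of $\omega$ to $\AKic_0^0(G)$ belongs to $\mathcal{W}_p(\AKic_0^0(G))$ and has upper gradient $g\in L^p(G)$. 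As the test object I would take the $0$-current $T:=\Lbrack y\Rbrack-\Lbrack x\Rbrack$; it satisfies $T(1)=0$, so $T\in\AKic_0^0(G)$, and by the definition in Proposition~\ref{prop:upper-grad-lipfct-0-cochain} we have $\omega(T)=u(y)-u(x)$ if $u(x)$ and $u(y)$ are finite and $\omega(T)=+\infty$ otherwise.

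The two hypotheses of Theorem~\ref{hoeldercont2} are then checked directly. Since $m=0$ we must have $\alpha=0$, and because $\|T\|=\delta_x+\delta_y$ we have $\mass(T)=2$ and $\|T\|(B(z,r))\le 2$ for every $z\in G$ and $r\ge 0$; hence $\int_G\|T\|(B(z,r))^{1/(p-1)}\,d\|T\|(z)\le 2^{1/(p-1)}\mass(T)$, so the growth condition \eqref{eq:growth-mass-T} holds for all $r\ge 0$ with $A=2$ and $\alpha=0$, and moreover $p>n=n-\alpha$ by hypothesis. For the filling bound, a connected Lie group with a left-invariant Finsler metric is a length space, so for each $\varepsilon>0$ there is a Lipschitz curve $c$ from $x$ to $y$ with $\length(c)\le d(x,y)+\varepsilon$; by Lemma~\ref{lemma:Lip-curve-current-mass-integral} the current $c_\#\Lbrack 1_{[a,b]}\Rbrack\in\AKic_1(G)$ has boundary $T$ and mass at most $\length(c)$, whence $\fillvol(T)\le d(x,y)\le 1$.

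Theorem~\ref{hoeldercont2} now applies and gives $|\omega(T)|\le E\,\Lambda(T)\,\fillvol(T)^{1-n/p}\,\norm{g}_{p,N(\spt T,s_0)}$. Substituting $A=2$, $\mass(T)=2$, $\alpha=0$, $m=0$ and $\theta=(1-p)/p$ into the formulas of the theorem yields $\Lambda(T)=\bigl(1+\tfrac{p}{p-n}\bigr)2^{n/p}$ and $s_0=\tfrac{3}{2}E\,\fillvol(T)\le\tfrac{3}{2}E\,d(x,y)$, both depending only on $p$ and on $G$ together with its metric, since $E$ depends only on $m=0$, $G$, and the metric. The right-hand side of the estimate is therefore finite, using also $\norm{g}_{p,N(\spt T,s_0)}\le\norm{g}_{p,G}<\infty$, so $\omega(T)$ is finite, $u(x)$ and $u(y)$ are finite, and $\omega(T)=u(y)-u(x)$. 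Finally, $\spt T\subset\{x,y\}$ gives $N(\spt T,s_0)\subset B(x,d(x,y)+s_0)\subset B\bigl(x,(1+\tfrac{3}{2}E)\,d(x,y)\bigr)$, and $p>n$ makes the exponent $1-n/p$ positive, so $\fillvol(T)^{1-n/p}\le d(x,y)^{1-n/p}$. Combining these, the corollary follows with $C:=\max\{E\,\Lambda(T),\,1+\tfrac{3}{2}E\}$, which depends only on $p$, $G$, and the left-invariant Finsler metric.

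I expect the only genuinely delicate point to be bookkeeping: verifying that once $m=0$ and $\alpha=0$ are fixed, every quantity entering the final bound ($A$, $\mass(T)$, $\theta$, $\Lambda(T)$, $s_0$, and $E$) is either universal or controlled solely by $p$ and $G$, together with the observation that the degenerate case where $u(x)$ or $u(y)$ is infinite cannot occur under the hypotheses, since the finiteness of the right-hand side in Theorem~\ref{hoeldercont2} already excludes it. Everything else is a direct substitution into Theorem~\ref{hoeldercont2} and Proposition~\ref{prop:upper-grad-lipfct-0-cochain}.
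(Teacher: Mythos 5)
Your proof is correct and takes exactly the route the paper indicates (the paper only states that the corollary follows from Theorem~\ref{hoeldercont2} combined with Proposition~\ref{prop:upper-grad-lipfct-0-cochain}, without writing out the deduction). Your instantiation with $T=\Lbrack y\Rbrack-\Lbrack x\Rbrack$, $m=\alpha=0$, $A=\mass(T)=2$, the length-space bound $\fillvol(T)\le d(x,y)\le 1$, and the bookkeeping for $\Lambda(T)$ and $s_0$ all check out, and the observation that finiteness of the right-hand side of Theorem~\ref{hoeldercont2} rules out $|u(x)|=\infty$ or $|u(y)|=\infty$ is exactly the right way to close the degenerate case.
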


Our second main result provides an analog of Theorem~\ref{hoeldercont2} for currents possibly with boundary.

\begin{theorem} \label{hoeldercont1}
Let $G$, $m$, and $\mathcal{C}$ be as above and let $T\in\mathcal{C}_m$. Suppose there exist $A,B>0$, $\alpha, \beta\in[0,n]$, $p> \max\{1, n-\alpha\}$, and $q>\max\{1, n-\beta\}$ such that $T$ satisfies \eqref{eq:growth-mass-T} and \eqref{eq:growth-mass-bdry-T} for all $r\geq 0$. If $\omega\in\mathcal{W}_{q,p}(\mathcal{C}_m)$ then we have
\begin{equation}\label{eq:flatnorm-hoelder-estimate}
|\omega(T)| \leq E\, \Lambda(T)\, \Big(\flatnormFAT(T, \mathcal{C})^{1-\frac{\lambda}{1+\delta}} + \flatnormFAT(T, \mathcal{C})^{1 - \frac{\gamma+\delta}{1+\delta}}  \Big)\,\|\omega\|_{q,p}
\end{equation}
with
$$
\Lambda(T) = \frac{2 + A^{1/p}+ B^{1/q}}{1-\gamma}\left(1 + \mass(T) + \mass(\bdry T)\right)^{\frac{\gamma + \delta}{1+\delta}},
$$
$$\delta= \max\{\alpha/p, \beta/q\}\quad\text{and}\quad \lambda = \min\{n/p, n/q\}\quad\text{and}\quad \gamma= \max\left\{\frac{n-\alpha}{p}, \frac{n-\beta}{q}\right\},$$
where $E$ depends only on $m$, $G$, and the left-invariant Finsler metric on $G$. 
\end{theorem}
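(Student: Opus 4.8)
The plan is to derive Theorem~\ref{hoeldercont1} from the basic sublinearity estimate combined with a smoothing-by-translation argument in $G$, followed by an optimization over the translation scale. Fix an upper norm $h\in L^q(G)$ and an upper gradient $g\in L^p(G)$ of $\omega$ with $\|h\|_q+\|g\|_p\le\|\omega\|_{q,p}+\varepsilon$, which is possible by the definition \eqref{poli}. For any decomposition $T=R+\bdry V$ with $R\in\mathcal C_m$ and $V\in\mathcal C_{m+1}$, sublinearity gives $|\omega(T)|\le|\omega(R)|+|\omega(\bdry V)|$, and the defining inequalities for $h$ and $g$ --- applied to $R$, and to the pair $(\bdry V,V)$ --- yield $|\omega(T)|\le\int_G h\,d\|R\|+\int_G g\,d\|V\|$. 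I would choose the decomposition almost optimal for the flat norm, so that $\mass(R)+\mass(V)\le\flatnormFAT(T,\mathcal C)+\varepsilon$. This is useless as it stands, since $\|R\|$ and $\|V\|$ may be concentrated where $h$ and $g$ are large; the next step replaces these singular measures by mollified ones.

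\textbf{Step 2: translation and averaging.} For $X$ in the Lie-algebra ball $B_s:=\{X:|X|\le s\}$, $s\le 1$, let $\tau_X$ denote right translation by $\exp X$, and set $\psi_X(t,x):=x\exp(tX)$, so that $\psi_X(0,\cdot)=\id$ and $\psi_X(1,\cdot)=\tau_X$. Left-invariance of the metric makes this deformation ``controlled'', with all constants depending only on $G$ and the Finsler metric: the curve $t\mapsto x\exp(tX)$ has metric speed $\equiv|X|\le s$ uniformly in $x$; the map $\psi_X(t,\cdot)$, being right translation by $\exp(tX)$, is $\|\operatorname{Ad}_{\exp(-tX)}\|$-Lipschitz, hence $C_G$-Lipschitz; left translations are isometries that preserve $\mu$; and $\tau_X(\spt T)\subset N(\spt T,s)$. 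By Lemma~\ref{lemma:mass-estimate-push-cone} the homotopy currents $\Sigma_X:=\psi_{X\#}([0,1]\times T)$ and $\Pi_X:=\psi_{X\#}([0,1]\times\bdry T)$ belong to $\mathcal C_{m+1}$ and $\mathcal C_m$ respectively, are supported in $N(\spt T,s)$, have mass at most $C_G s\,\mass(T)$ and $C_G s\,\mass(\bdry T)$, and satisfy $\bdry\Sigma_X=\tau_{X\#}T-T-\Pi_X$. Writing $\tau_{X\#}T=\tau_{X\#}R+\bdry(\tau_{X\#}V)$ and applying sublinearity and the inequalities for $h,g$ once more, for every $X\in B_s$,
\[
|\omega(T)|\le\int_G g\,d\|\tau_{X\#}V\|+\int_G h\,d\|\tau_{X\#}R\|+\int_G g\,d\|\Sigma_X\|+\int_G h\,d\|\Pi_X\|.
\]
Averaging the right-hand side over $X\in B_s$ and using Fubini and Hölder's inequality, the first two terms are bounded by $C_G s^{-n/p}\|g\|_p\mass(V)$ and $C_G s^{-n/q}\|h\|_q\mass(R)$ (using only $\|R\|(B)\le\mass(R)$, $\|V\|(B)\le\mass(V)$, and $\mu(B(x,C_Gs))\approx s^n$); and for the last two, the $X$-averages of $\|\Sigma_X\|$ and $\|\Pi_X\|$ are dominated by $C_G s^{1-n}$ times $\mu$ weighted near $\spt T$ by $\|T\|(B(\cdot,C_Gs))$ and $\|\bdry T\|(B(\cdot,C_Gs))$, whence a Hölder split together with the growth hypotheses \eqref{eq:growth-mass-T} and \eqref{eq:growth-mass-bdry-T} --- which is precisely why these are stated with exponents $1/(p-1)$ and $1/(q-1)$ --- gives the bounds $C_G A^{1/p}s^{1+(\alpha-n)/p}\|g\|_p\mass(T)^{(p-1)/p}$ and $C_G B^{1/q}s^{1+(\beta-n)/q}\|h\|_q\mass(\bdry T)^{(q-1)/q}$.

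\textbf{Step 3: optimization, and the reduction of Theorem~\ref{hoeldercont2}.} Collecting the four bounds, letting $\varepsilon\to0$, and using $\mass(R),\mass(V)\le\flatnormFAT(T,\mathcal C)$, one obtains
\[
|\omega(T)|\le C_G\Big[(s^{-n/p}+s^{-n/q})\flatnormFAT(T,\mathcal C)(\|g\|_p+\|h\|_q)+A^{1/p}s^{1+\frac{\alpha-n}{p}}\mass(T)^{\frac{p-1}{p}}\|g\|_p+B^{1/q}s^{1+\frac{\beta-n}{q}}\mass(\bdry T)^{\frac{q-1}{q}}\|h\|_q\Big].
\]
Since $p>n-\alpha$ and $q>n-\beta$, the exponents $1+(\alpha-n)/p$ and $1+(\beta-n)/q$ are positive, so the bracket is a term decreasing in $s$ plus terms increasing in $s$; minimizing over $s\in(0,1]$ --- the minimizer being of order $\flatnormFAT(T,\mathcal C)^{1/(1+\delta)}$ up to the mass factors, with $s=1$ in the regime where that would exceed $1$, which is what produces the second power of $\flatnormFAT(T,\mathcal C)$ and the factor $(1+\mass(T)+\mass(\bdry T))^{(\gamma+\delta)/(1+\delta)}$ appearing in $\Lambda(T)$ --- yields \eqref{eq:flatnorm-hoelder-estimate} with $\delta,\lambda,\gamma$ as in the statement, after elementary algebra. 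When $m=0$ the two boundary terms vanish by the convention $\mass(\bdry T)=0$. The same scheme with $\bdry T=0$ (so that only the $g$, $p$ terms survive), combined with $\flatnormFAT(T,\mathcal C)\le\fillvol(T)$, specializes to Theorem~\ref{hoeldercont2}.

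\textbf{Expected main obstacle.} The crux is Step 2: one must construct the deformation --- the ``controlled family of curves'' --- so that all Lipschitz and Jacobian constants are uniform in the base point; this is where left-invariance is genuinely used, via isometric left translations, the boundedness of $\operatorname{Ad}$ on a neighbourhood of $\id$, and exponential coordinates at $\id$ transported by left translation. One must then correctly propagate the $1/(p-1)$ and $1/(q-1)$ form of the growth hypotheses through Fubini and the Hölder split to obtain the averaged integral bounds; this is exactly why \eqref{eq:growth-mass-T} and \eqref{eq:growth-mass-bdry-T} are the natural hypotheses rather than pointwise ball growth. The ensuing optimization is routine but bookkeeping-heavy precisely because the $p$- and $q$-scalings differ, which is the origin of the two-term form of \eqref{eq:flatnorm-hoelder-estimate}.
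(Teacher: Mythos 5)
Your proof is correct and follows essentially the same route as the paper's, though organized a bit more economically. Both arguments combine a near-optimal flat decomposition $T=R+\bdry V$, a right-translate-and-average step over a small ball (Fubini plus H\"older with the $1/(p-1)$-moment growth hypotheses), a homotopy filling $\tau_{X\#}T-T=\bdry\Sigma_X+\Pi_X$ built from exponential coordinates, and optimization over the translation scale. What the paper calls a ``controlled family of curves'' is precisely your $\psi_X(t,x)=x\exp(tX)$ in the Euclidean/Lie-exponential case; the abstraction is there only so that the same machinery can be reused with Carnot dilations when proving Theorem~\ref{hoeldercont2} for a Carnot group, which is orthogonal to the present theorem.

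The genuinely different bookkeeping is worth noting. The paper routes the estimate through the intermediate quantity $\omega_+(T,r)=\fint_{B(e,r)}|\omega(\varphi_{x\#}T)|$, splits $|\omega(T)|\le|\,|\omega(T)|-\omega_+(T,r)\,|+\omega_+(T,r)$, and bounds the two pieces separately (Propositions~\ref{proposition:avg-omega-estimate-curr} and \ref{proposition:tech-avg-diff-pt-omega-estimate}). This forces a preliminary step, Lemma~\ref{lemma:sobolev-omega-translates}, to establish that $x\mapsto|\omega(\varphi_{x\#}T)|$ is finite a.e.\ (in fact locally Sobolev) so that $\omega_+(T,r)$ makes sense. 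You instead write the decomposition $T=\tau_{X\#}R-\Pi_X+\bdry(\tau_{X\#}V-\Sigma_X)$ pointwise in $X$, apply sublinearity and the upper-norm/upper-gradient inequalities to get a bound on the constant $|\omega(T)|$ for every $X$, and only then average; no finiteness of $|\omega(\tau_{X\#}T)|$ is ever needed. This is cleaner for the purpose of proving this theorem, at the cost of not producing the Sobolev-regularity-of-translates statement, which the paper records as a byproduct. The remaining estimates (your $s^{-n/p}\,\flatnormFAT$ and $A^{1/p}s^{1+(\alpha-n)/p}\mass(T)^{(p-1)/p}$ terms, the analogous $q,\beta$ terms, and the choice $s\sim(\flatnormFAT/(1+\nmass(T)))^{1/(1+\delta)}$, capped at a fixed $r_0$) match the paper's, and the two-exponent form of \eqref{eq:flatnorm-hoelder-estimate} arises just as you say because the $p$- and $q$-scalings differ.
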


Denote by $\mathcal{P}_m(\R^n)$ the space of real polyhedral $m$-chains in $\R^n$ and by $\mathcal{F}_m(\R^n)$ the completion with respect to the flat norm of $\mathcal{P}_m(\R^n)$. As a first consequence of Theorem \ref{hoeldercont1} we obtain that if $\omega$ is linear and belongs to 
$W_{q,p}(\AKnc_m(\R^n),\AKnc_{m+1}(\R^n))$, and if $p>n-m$ and $q> n-m+1$, then $\omega$ is well-defined for every $T \in \mathcal{P}_m(\R^n)$, in the sense that there exists a unique cochain $\omega' : \mathcal{P}_m(\R^n) \to \R$ such that the restriction of every representative of $\omega$ to $\mathcal{P}_m(\R^n)$ coincides with $\omega'$. However, unlike in the case of Whitney flat forms mentioned in the introduction and also below, $\omega'$ does not necessarily have a unique extension to the completion $\mathcal{F}_m(\R^n)$ because our estimates depend on the local mass growths of $T$ and $\partial T$. 


As a second consequence of Theorem~\ref{hoeldercont1} we obtain the following statement about Whitney flat forms and thus partly recover Wolfe's theorem mentioned in the introduction. Every flat $m$-form $\omega$ in $\R^n$ gives rise to a unique linear cochain $\tilde{\omega}:\mathcal{F}_m(\R^n)\to\R$ which is Lipschitz continuous with respect to the flat norm; more precisely,  
\begin{equation}\label{eq:lip-cont-omega-flat}
 |\tilde{\omega}(T)| \leq \tilde{E}\, \flatnormFAT(T) \|\omega\|_\flat
\end{equation}
for every $T\in\mathcal{F}_m(\R^n)$, where $\tilde{E}$ is independent of $T$ and $\omega$. The assignment $\omega\mapsto\tilde{\omega}$ is linear and injective, and $\|\omega\|_\flat$ is defined by $\|\omega\|_\flat = \max\{\|\omega\|_\infty, \|d\omega\|_\infty\}.$
Note that Wolfe's theorem asserts the same with $\tilde{E}=1$; moreover, it provides a converse.
We briefly sketch how Theorem~\ref{hoeldercont1} implies the statement above. Let $\omega$ first be a flat $m$-form in $\R^n$ with compact support. By the discussion after Definition~\ref{sobospace} the form $\omega$ gives rise to a linear cochain $\tilde{\omega}^k$ in $W_{k,k}(\AKnc_m(\R^n),\AKnc_{m+1}(\R^n))$ for every $k\in\N$. It follows from the paragraph above that for all $k$ large enough $\tilde{\omega}^k(T)$ is well-defined for every $T\in\mathcal{P}_m(\R^n)$; moreover, for every $k$ large enough we have $\|\tilde{\omega}^k\|_{k,k}\leq C\|\omega\|_\flat$ for some constant $C$ which is  independent of $k$. Finally, Lemma~\ref{fugledelemma} together with Proposition~\ref{elementary} show that for every $T\in\mathcal{P}_m(\R^n)$ we have $\tilde{\omega}^k(T) = \tilde{\omega}^l(T)$ for all $k,l$ large enough. We can therefore define a linear cochain $\tilde{\omega}$ on $\mathcal{P}_m(\R^n)$ by $\tilde{\omega}(T):= \lim_{k\to\infty}\tilde{\omega}^k(T)$. Since in Theorem~\ref{hoeldercont1}, the exponents of $\flatnormFAT$ tend to $1$ and $\Lambda(T)\to2$ when $p,q\to\infty$ it follows that $\tilde{\omega}$ indeed satisfies \eqref{eq:lip-cont-omega-flat}, and clearly, $\tilde{\omega}$ extends to $\mathcal{F}_m(\R^n)$. 
To prove the assertion for general, not necessarily compactly supported, flat forms $\omega$, we fix $T \in \mathcal{P}_m(\R^n)$, and a ball $B(0,R)$ containing the support of $T$. Moreover, we choose a smooth compactly supported function $\varphi_j$ with the following properties: $\varphi_j$ takes values between $0$ and $1$, equals $1$ on $B(0,R)$, and $|\nabla \varphi_j|$ is bounded by $1/j$. Next, we define $\omega_j$ by multiplying the coefficients of $\omega$ by $\varphi_j$. We can now define 
$\tilde{\omega}(T)=\tilde{\omega}_j(T)$ as above; the definition is clearly independent of $j$. Moreover, 
$|\omega_j|_{\flat} \to |\omega|_{\flat}$, so we can apply the above argument with the compactly supported forms $\omega_j$ to get the conclusion also for $\omega$. 

Finally, we note that Theorems \ref{hoeldercont2} and \ref{hoeldercont1} do not in general hold for the borderline exponents $p=n-\alpha$ and $q=n-\beta$ as the following example shows.

\begin{example}
\label{sharpthm}
Let $n \geq 3$ and $1 \leq m \leq n-2$. Moreover, let $T=\Lbrack 1_{B(0,1)}\Rbrack\in\AKic_m(\R^m)$ and define $F:\R^m \to \R^n$ by
$$
F(y_1,\ldots,y_m)=(0,\ldots,0,y_{1}, \ldots, y_m).
$$ 
For $x \in \R^n$ let $T_x:= \psi_{x\#} T$, where the map $\psi_x$ is given by $\psi_x(y):=F(y)+x$. Then the currents $T_x$ satisfy \eqref{eq:growth-mass-T} with $\alpha = m$ and \eqref{eq:growth-mass-bdry-T} with $\beta=m-1$. Fix a smooth $\varphi: \R^m \to [0,1]$ such that 
$\varphi$ equals $1$ on $B(0,1)$ and $0$ on $\R^m \setminus B(0,2)$. Finally, denote $\tilde{x}=(x_1,\ldots, x_{n-m})$, and define an $m$-form $\tilde{\omega}$ on $\R^n$ by
$$
\tilde{\omega}(x) = \varphi(x_{n-m+1},\ldots,x_n)\max \{\log (\log 1/|\tilde{x}|),0 \}\,  dx_{n-m+1} \wedge \cdots \wedge dx_n. 
$$
We have 
$$
|\tilde{\omega}| \leq 1_{\R^{n-m} \times B(0,2)}\max\{\log (\log 1/|\tilde{x}|),0\} \in L^q(\R^n) \quad \text{for every } q \geq 1, 
$$
and 
$$
| d \tilde{\omega}| \leq 1_{\R^{n-m} \times B(0,2)} \max\{|\tilde{x}|^{-1}(\log 1/|\tilde{x}|)^{-1},1/e\} \in L^{n-m}(\R^n).   
$$
By the discussion in Section \ref{exception}, $\tilde{\omega}$ induces a cochain $\omega \in \mathcal{W}_{q,n-m}(\AKic_m(\R^n))$ for every $q \geq 1$. However, $\omega(T_x)$ converges to infinity as $x \to 0$. This shows that Theorem \ref{hoeldercont1} does not hold with the borderline exponents. By slightly modifying the example, we see that this is the case also for Theorem \ref{hoeldercont2}; instead of an $m$-ball, let $T$ be induced by an $m$-sphere. Then we can construct a cochain $\omega \in \mathcal{W}_{n-m}(\AKic_m^0(\R^n))$ with a singularity at $T$ in a similar way as above. 
\end{example}

The proofs of the two theorems above will be given at the end of Section~\ref{section:tech-est-cochains}. In Section~\ref{part22} and most of \ref{section:tech-est-cochains} we prove auxiliary results used in the proofs of the two theorems.  We briefly discuss the main geometric ideas of the proof of Theorem \ref{hoeldercont2}. Let $T\in\AKic_m^0(G)$  and $\omega\in\mathcal{W}_p(\AKic_m^0(G))$. Using the group structure of $G$, we show that the averages 
$$
 \omega_+(T, r):=  \frac{1}{\haus^n(B(e,r))}\int_{B(e,r)} |\omega(\varphi_{x\#}T)|\,d\haus^n(x) 
$$
are well-defined, where $\varphi_x$ is the right-multiplication by $x$. The proof of the theorem is based on a simple change of variables formula (Lemma \ref{lemma:fubini-G}), and two basic estimates concerning  $\omega_+(T, r)$. Firstly, we take almost minimal fillings of the currents $\varphi_{x\#}T$, and then estimate $\omega_+(T, r)$ using the upper gradient property of $\omega$ over the fillings, and change of variables. We also use isoperimetric methods to show that we can restrict ourselves to a small neighborhood of the support of $T$. Secondly, we fill $T-\varphi_{x\#}T$ with a current whose geometry is suitably controlled, using a notion of controlled family of curves. In euclidean space we could simply choose this family of curves to be geodesic segments transporting $T$ to $\varphi_{x\#}T$. We then estimate the difference $| \omega(T)- \omega_+(T, r)|$, using the upper gradient property of $\omega$ over these fillings, and change of variables. In this second step we need to be able to control the local growth of the fillings, and it is for this reason that we need to assume local growth conditions on $T$. Finally, we combine the two estimates and choose the radius $r$ in an optimal way to finish the proof. The proof of Theorem \ref{hoeldercont1} follows the same steps but estimates concerning the boundary of $T$ also come into play.


\subsection{Basic integral estimates} \label{part22}

The aim of this as well as most of the next section is to develop the tools which will allow us to prove the H\"older continuity estimates stated in the previous section. 

Let $G$ be a Lie group of dimension $n$, endowed with a left-invariant Finsler metric and the Hausdorff $n$-measure. We first prove the following estimate.

\begin{proposition}\label{proposition:elem-upper-G}
 Given a finite Borel measure $\mu$ on $G$, a Borel measurable function $f:G\to[0,\infty]$, a Borel set $A\subset G$, and $p\geq1$, we have
\begin{equation}\label{eq:int-est-upper-G}
 \int_A\int_Gf(zx)d\mu(z)d\haus^n(x) \leq \|f\|_{p, \Omega} \left[\haus^n(A)\mu(G)\right]^{\frac{p-1}{p}} \varrho(\mu, A^{-1})^{\frac{1}{p}},
\end{equation} 
where we have set $\Omega = (\spt\mu)\cdot A$ and $$\varrho(\mu, C):= \sup_{z\in G}\mu(zC)$$ whenever $C\subset G$ is Borel measurable.
\end{proposition}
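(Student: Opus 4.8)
The plan is to combine Hölder's inequality on the product space $A\times G$ with a change of variables that exploits the left-invariance of $\haus^n$. As a preliminary reduction, I would first observe that one may assume $f$ vanishes outside $\Omega=(\spt\mu)\cdot A$: for $\mu$-almost every $z$, namely for $z\in\spt\mu$, and for every $x\in A$ one has $zx\in\Omega$, so replacing $f$ by $f\cdot 1_\Omega$ changes neither the left-hand side of \eqref{eq:int-est-upper-G} nor $\|f\|_{p,\Omega}$. Since a Lie group is $\sigma$-compact, $\haus^n$ is $\sigma$-finite and $\mu$ is finite, so Tonelli's theorem applies freely to the non-negative Borel function $(z,x)\mapsto f(zx)$ on $G\times G$; this will be used without further comment.

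Next I would apply Hölder's inequality on $A\times G$ equipped with the product measure $\haus^n\times\mu$, which has total mass $\haus^n(A)\,\mu(G)$:
$$\int_A\int_G f(zx)\,d\mu(z)\,d\haus^n(x)\;\leq\;\Big(\int_A\int_G f(zx)^p\,d\mu(z)\,d\haus^n(x)\Big)^{1/p}\big(\haus^n(A)\,\mu(G)\big)^{(p-1)/p}.$$
(For $p=1$ this step is vacuous and the argument below still yields the claim.) It then remains to estimate $J:=\int_A\int_G f(zx)^p\,d\mu(z)\,d\haus^n(x)$. I would interchange the order of integration and, for fixed $z$, carry out the substitution $y=zx$. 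Because the Finsler metric on $G$ is left-invariant, left translation $x\mapsto zx$ is an isometry of $G$ and therefore preserves $\haus^n$; hence $\int_A f(zx)^p\,d\haus^n(x)=\int_{zA} f(y)^p\,d\haus^n(y)=\int_G f(y)^p\,1_{zA}(y)\,d\haus^n(y)$. Interchanging integrals once more gives
$$J=\int_G f(y)^p\Big(\int_G 1_{zA}(y)\,d\mu(z)\Big)d\haus^n(y).$$

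To finish, I would use the set-theoretic equivalence $y\in zA\iff z\in yA^{-1}$, which shows $1_{zA}(y)=1_{yA^{-1}}(z)$ and hence $\int_G 1_{zA}(y)\,d\mu(z)=\mu(yA^{-1})\leq\varrho(\mu,A^{-1})$ for every $y\in G$. Substituting this bound and recalling that $f$ is supported in $\Omega$ yields $J\leq\varrho(\mu,A^{-1})\int_\Omega f^p\,d\haus^n=\varrho(\mu,A^{-1})\,\|f\|_{p,\Omega}^p$, and plugging this into the Hölder estimate above proves \eqref{eq:int-est-upper-G}. I do not expect a serious obstacle here: the argument is essentially a two-fold application of Tonelli plus one Hölder inequality, and the only points needing genuine care are the justification of the change of variables — which rests precisely on the hypothesis that the metric, and hence $\haus^n$, is left-invariant — and the bookkeeping identity $zA\ni y\iff z\in yA^{-1}$ that converts the inner $\mu$-integral into $\mu(yA^{-1})$.
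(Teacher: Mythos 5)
Your proof is correct, and it reaches the same bound by a somewhat different route than the paper. The paper first isolates the change-of-variables identity $\int_A\int_G f(zx)\,d\mu(z)\,d\haus^n(x)=\int_G f(x)\,\mu(xA^{-1})\,d\haus^n(x)$ as a standalone lemma (which it then reuses elsewhere), applies Hölder's inequality to the resulting single integral $\int_G f\cdot\mu(\cdot\,A^{-1})\,d\haus^n$ over $\Omega$, and then re-expands $\int_G\mu(xA^{-1})^{p/(p-1)}d\haus^n(x)$ via a second application of the same identity (with $\mu(\cdot\,A^{-1})^{1/(p-1)}$ in place of $f$) before invoking the uniform bound $\mu(\cdot\,A^{-1})\leq\varrho(\mu,A^{-1})$. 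You instead apply Hölder directly on the product space $A\times G$ with measure $\haus^n\times\mu$, which has finite total mass $\haus^n(A)\mu(G)$, and then treat the remaining $L^p$ integral $\int_A\int_G f(zx)^p$ by the same Fubini-plus-translation-invariance argument and the same uniform bound on $\mu(yA^{-1})$. Your route avoids the ``expand back'' step and is arguably a bit more direct; the paper's route has the advantage of factoring out the identity $\int_A\int_G f(zx)\,d\mu\,d\haus^n=\int_G f\,\mu(\cdot\,A^{-1})\,d\haus^n$ as a lemma that is reused later (for instance in the proof of the variant Proposition that follows, and in the capacity estimates). Two minor remarks: your opening reduction to $f$ supported on $\Omega$ is harmless but not strictly needed, since the identity you derive already gives $J=\int_G f^p\,\mu(\cdot\,A^{-1})\,d\haus^n$ and $\mu(yA^{-1})=0$ automatically whenever $y\notin\Omega$ (this is exactly the paper's opening observation); and the case $\haus^n(A)=\infty$ makes the right-hand side infinite (barring degeneracies), so Hölder on the finite product measure is only invoked in the nontrivial case.
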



We first note:

\begin{lemma}\label{lemma:fubini-G}
Given a Borel measure $\mu$ on $G$, a Borel measurable function $f:G\to[0,\infty]$, and a Borel set $A\subset G$, we have
\begin{equation*}
 \int_A\int_Gf(zx)d\mu(z)d\haus^n(x) = \int_Gf(x)\mu(xA^{-1})d\haus^n(x).
\end{equation*} 
\end{lemma}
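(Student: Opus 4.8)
The plan is to prove the change-of-variables identity
\[
\int_A\int_G f(zx)\,d\mu(z)\,d\haus^n(x) = \int_G f(x)\,\mu(xA^{-1})\,d\haus^n(x)
\]
by reducing it to Fubini's theorem together with the \emph{left-invariance} of the Hausdorff $n$-measure on $G$. First I would recall that since $G$ carries a left-invariant Finsler metric, the measure $\haus^n$ is left-invariant: for every Borel set $E\subset G$ and every $g\in G$ we have $\haus^n(gE)=\haus^n(E)$, and consequently $\int_G \phi(gx)\,d\haus^n(x)=\int_G\phi(x)\,d\haus^n(x)$ for every nonnegative Borel function $\phi$. This is the only structural fact about $G$ that is needed.

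Next I would write the left-hand side as a double integral over $G\times G$ of the nonnegative Borel function $(z,x)\mapsto \mathbf 1_A(x)\,f(zx)$ and apply Tonelli's theorem to interchange the order of integration:
\[
\int_A\int_G f(zx)\,d\mu(z)\,d\haus^n(x) = \int_G\left(\int_G \mathbf 1_A(x)\,f(zx)\,d\haus^n(x)\right)d\mu(z).
\]
For each fixed $z\in G$, I perform the substitution $y=zx$ in the inner integral; by left-invariance of $\haus^n$ (applied with the left translation $L_z$, whose inverse is $L_{z^{-1}}$) this inner integral equals $\int_G \mathbf 1_A(z^{-1}y)\,f(y)\,d\haus^n(y)$, and $\mathbf 1_A(z^{-1}y)=\mathbf 1_{zA}(y)$. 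Substituting back and applying Tonelli once more gives
\[
\int_G\int_G \mathbf 1_{zA}(y)\,f(y)\,d\haus^n(y)\,d\mu(z) = \int_G f(y)\left(\int_G \mathbf 1_{zA}(y)\,d\mu(z)\right)d\haus^n(y).
\]
Finally I observe that $y\in zA$ if and only if $z\in yA^{-1}$, so $\int_G \mathbf 1_{zA}(y)\,d\mu(z)=\mu(yA^{-1})$, which yields the claimed identity after renaming $y$ as $x$.

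The main point to be careful about — rather than a genuine obstacle — is the measurability and the justification of Tonelli's theorem: one must check that $(z,x)\mapsto f(zx)$ is Borel measurable on $G\times G$ (which follows since multiplication $G\times G\to G$ is continuous and $f$ is Borel) and that $\mu$ is $\sigma$-finite, which holds because $\mu$ is finite. The substitution step requires only that a left translation of $G$ is a measure-preserving Borel isomorphism for $\haus^n$, which is immediate from left-invariance of the metric; no Jacobian factor appears precisely because we use $\haus^n$ and not a coordinate Lebesgue measure. Everything else is a routine bookkeeping of indicator functions, so I would not dwell on it.
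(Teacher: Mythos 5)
Your proof is correct and follows essentially the same route as the paper: both hinge on rewriting $\mu(xA^{-1})$ (equivalently, the indicator identity $1_A(z^{-1}x)=1_{xA^{-1}}(z)$), applying Fubini--Tonelli, and invoking left-invariance of $\haus^n$ to absorb the translation $x\mapsto zx$; you simply run the computation from left to right while the paper runs it from right to left.
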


\begin{proof}
 We have $$\mu(xA^{-1}) = \int_G 1_{xA^{-1}}(z)d\mu(z) = \int_G1_A(z^{-1}x)d\mu(z)$$ and hence, by Fubini-Tonelli theorem and left-invariance of $\haus^n$,
 \begin{equation*}
  \begin{split}
   \int_Gf(x)\mu(xA^{-1})d\haus^n(x) &= \int_G\int_Gf(x)1_A(z^{-1}x)d\mu(z)d\haus^n(x)\\
    &= \int_G\int_Gf(zx)1_A(x)d\haus^n(x)d\mu(z)\\
    &= \int_A\int_Gf(zx)d\mu(z)d\haus^n(x).
  \end{split}
 \end{equation*}
\end{proof}

\begin{proof}[Proof of Proposition~\ref{proposition:elem-upper-G}]
 Note first that if $x\not\in (\spt \mu)\cdot A$ then $xA^{-1}\cap \spt \mu=\emptyset$ and thus $\mu(xA^{-1})=0$. In case $p=1$ then inequality \eqref{eq:int-est-upper-G} follows directly from Lemma~\ref{lemma:fubini-G}. If $p>1$ then we use Lemma~\ref{lemma:fubini-G}, the Fubini-Tonelli theorem, and 
 H\"older's inequality, to obtain
 \begin{equation*}
  \begin{split}
   \int_A\int_Gf(zx)d\mu(z)d\haus^n(x) &= \int_Gf(x)\mu(xA^{-1})d\haus^n(x)\\
   &\leq \|f\|_{p, \Omega}\left(\int_G\mu(xA^{-1})^{\frac{1}{p-1}}\mu(xA^{-1})d\haus^n(x)\right)^{\frac{p-1}{p}}\\
   &= \|f\|_{p,\Omega}\left(\int_A\int_G\mu(zxA^{-1})^{\frac{1}{p-1}}d\mu(z)d\haus^n(x)\right)^{\frac{p-1}{p}}\\
   &\leq \|f\|_{p, \Omega} \left[\haus^n(A)\mu(G)\right]^{\frac{p-1}{p}} \varrho(\mu, A^{-1})^{\frac{1}{p}}.
  \end{split}
 \end{equation*}
\end{proof}

By using the fact that $zxA^{-1}\subset zAA^{-1}$ for every $x\in A$ in the proof above we also obtain the following variant of Proposition~\ref{proposition:elem-upper-G}.

\begin{proposition}\label{proposition:elem-upper-G-variant}
 Given a finite Borel measure $\mu$ on $G$, a Borel measurable function $f:G\to[0,\infty]$, a Borel set $A\subset G$, and $p>1$, we have
\begin{equation*}
 \int_A\int_Gf(zx)d\mu(z)d\haus^n(x) \leq \|f\|_{p, \Omega} \haus^n(A)^{\frac{p-1}{p}} \left(\int_G \mu(zAA^{-1})^{\frac{1}{p-1}}d\mu(z)\right)^{\frac{p-1}{p}},
\end{equation*} 
where $\Omega = (\spt\mu)\cdot A$.
\end{proposition}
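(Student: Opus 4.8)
The plan is to follow the proof of Proposition~\ref{proposition:elem-upper-G} line by line and change only its last step, replacing the crude estimate $\mu(zxA^{-1})\le\varrho(\mu,A^{-1})$ by the sharper set inclusion $zxA^{-1}\subset zAA^{-1}$, which holds for every $x\in A$ and yields a bound on $\mu(zxA^{-1})$ that is independent of $x$.

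Concretely, first I would note, exactly as before, that for $z\in\spt\mu$ and $x\in A$ one has $zx\in\Omega=(\spt\mu)\cdot A$, so $f$ enters only through its restriction to $\Omega$; this justifies the factor $\|f\|_{p,\Omega}$ on the right-hand side. Then I would apply Lemma~\ref{lemma:fubini-G} to rewrite the left-hand side as $\int_G f(x)\,\mu(xA^{-1})\,d\haus^n(x)$, and H\"older's inequality with exponents $p$ and $p/(p-1)$ to pull out $\|f\|_{p,\Omega}$, leaving the factor $\big(\int_G \mu(xA^{-1})^{p/(p-1)}\,d\haus^n(x)\big)^{(p-1)/p}$. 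Writing $\mu(xA^{-1})^{p/(p-1)}=\mu(xA^{-1})^{1/(p-1)}\mu(xA^{-1})$, expanding the last factor as $\mu(xA^{-1})=\int_G 1_A(z^{-1}x)\,d\mu(z)$, and using Fubini--Tonelli together with the left-invariance of $\haus^n$ (the substitution $x\mapsto zx$), one arrives, exactly as in the earlier proof, at
$$
\int_G \mu(xA^{-1})^{p/(p-1)}\,d\haus^n(x)=\int_A\int_G \mu(zxA^{-1})^{1/(p-1)}\,d\mu(z)\,d\haus^n(x).
$$
Now I would invoke $zxA^{-1}\subset zAA^{-1}$ for $x\in A$, hence $\mu(zxA^{-1})\le\mu(zAA^{-1})$; since the latter does not depend on $x$, the double integral on the right is at most $\haus^n(A)\int_G \mu(zAA^{-1})^{1/(p-1)}\,d\mu(z)$. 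Substituting this back and distributing the exponent $(p-1)/p$ over the product $\haus^n(A)\cdot\int_G(\cdots)$ gives the asserted inequality.

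I do not expect any genuine obstacle here: the argument is a verbatim variant of an already-established proof, and the single new ingredient is the elementary inclusion above. The only point requiring (routine) care is the legitimacy of the interchanges of integrals, but this holds since $\mu$ is finite, every integrand is nonnegative and Borel measurable, and $\haus^n$ is $\sigma$-finite on the sets in question, so Fubini--Tonelli applies without restriction, precisely as in the proof of Proposition~\ref{proposition:elem-upper-G}.
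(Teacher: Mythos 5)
Your proof is correct and is exactly the argument the paper intends: the paper itself states that Proposition~\ref{proposition:elem-upper-G-variant} is obtained from the proof of Proposition~\ref{proposition:elem-upper-G} ``by using the fact that $zxA^{-1}\subset zAA^{-1}$ for every $x\in A$,'' which is precisely the single modification you make.
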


\begin{remark}
Note for example that if $A=B(e,r)$ then $zAA^{-1} \subset B(z,2r)$.
\end{remark}

We can use Proposition~\ref{proposition:elem-upper-G} to obtain the following estimate in Euclidean space.

\begin{proposition}\label{prop:growth-integration-Euclidean}
 Let $n\geq 1$ and let $\mu$ be a finite Borel measure on $\R^n$ such that, for some $A, r_0>0$ and $\alpha\in[0, n]$,
 \begin{equation}\label{eq:strong-growth-condition-mu}
  \mu(B(z,r))\leq Ar^\alpha
 \end{equation}
 for all $r\in(0,r_0)$ and all $z\in \R^n$. Let $f:\R^n\to[0,\infty]$ be Borel measurable and $p>\max\{1, n-\alpha\}$. Then for every $r\in(0,r_0)$ we have
 \begin{equation*}
  \int_{B(0,r)}\int_0^1\int_{\R^n} f(z+tx)\,d\mu(z)dt\,dx \leq \frac{A^{\frac{1}{p}}p}{p-n+\alpha}r^{\frac{\alpha}{p}} \left[\omega_nr^n\mu(\R^n)\right]^{\frac{p-1}{p}} \|f\|_{p, \Omega},
 \end{equation*}
 where $\Omega = N(\spt\mu, r)$ and where $\omega_n$ denotes the volume of the unit ball in $\R^n$. 
\end{proposition}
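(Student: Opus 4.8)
The plan is to reduce the assertion to Proposition~\ref{proposition:elem-upper-G}, applied to the additive group $G=\R^n$ (on which $\haus^n$ is Lebesgue measure and translation is the group operation, so all hypotheses of that proposition concerning $G$ hold trivially), after swapping the order of integration and rescaling in the inner variable. First I would use Tonelli's theorem to move the $t$-integral outside, which is legitimate since the integrand is nonnegative:
\[
\int_{B(0,r)}\int_0^1\int_{\R^n} f(z+tx)\,d\mu(z)\,dt\,dx=\int_0^1\left(\int_{B(0,r)}\int_{\R^n} f(z+tx)\,d\mu(z)\,dx\right)dt.
\]
For each fixed $t\in(0,1]$ I substitute $y=tx$ in the inner integral; since $x\in B(0,r)$ corresponds to $y\in B(0,tr)$ and $dx=t^{-n}\,dy$, this gives
\[
\int_{B(0,r)}\int_{\R^n} f(z+tx)\,d\mu(z)\,dx=t^{-n}\int_{B(0,tr)}\int_{\R^n} f(z+y)\,d\mu(z)\,dy.
\]

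Next I would apply Proposition~\ref{proposition:elem-upper-G} with $G=\R^n$, the measure $\mu$, the function $f$, the Borel set $A=B(0,tr)$, and the exponent $p$. In the additive group the set $A^{-1}=-B(0,tr)=B(0,tr)$ is symmetric, so $\varrho(\mu,A^{-1})=\sup_{z}\mu(B(z,tr))\leq A(tr)^\alpha$, using \eqref{eq:strong-growth-condition-mu} and the fact that $tr\leq r<r_0$; moreover $\haus^n(A)=\omega_n(tr)^n$, and $(\spt\mu)\cdot A=(\spt\mu)+B(0,tr)\subset N(\spt\mu,tr)\subset N(\spt\mu,r)=\Omega$, so $\|f\|_{p,(\spt\mu)+B(0,tr)}\leq\|f\|_{p,\Omega}$. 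Proposition~\ref{proposition:elem-upper-G} therefore yields
\[
\int_{B(0,tr)}\int_{\R^n} f(z+y)\,d\mu(z)\,dy\leq\|f\|_{p,\Omega}\,\bigl[\omega_n(tr)^n\mu(\R^n)\bigr]^{\frac{p-1}{p}}A^{\frac{1}{p}}(tr)^{\frac{\alpha}{p}}.
\]

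Finally I would multiply by $t^{-n}$, integrate in $t$ over $(0,1)$, and factor out the $r$-dependent constants. Collecting the powers of $t$ one gets the exponent $-n+\tfrac{n(p-1)}{p}+\tfrac{\alpha}{p}=\tfrac{\alpha-n}{p}$, hence $\int_0^1 t^{(\alpha-n)/p}\,dt=\tfrac{p}{p-n+\alpha}$, which is finite precisely because $p>n-\alpha$; what remains after pulling out $r$ is exactly the factor $\tfrac{A^{1/p}p}{p-n+\alpha}\,r^{\alpha/p}\,[\omega_n r^n\mu(\R^n)]^{(p-1)/p}\,\|f\|_{p,\Omega}$, as claimed. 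There is no genuine obstacle beyond this bookkeeping: the only two places where the hypotheses really enter are the convergence of the $t$-integral (needing $p>n-\alpha$) and the validity of the growth bound at scale $tr$ (needing $tr<r_0$, guaranteed by $t\leq 1$ and $r<r_0$), while the finiteness of $\mu$ and translation-invariance required by Proposition~\ref{proposition:elem-upper-G} are immediate.
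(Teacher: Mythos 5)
Your proposal is correct and is essentially identical to the paper's own proof: the paper also applies the change of variables $x\mapsto tx$ to rewrite the triple integral as $\int_0^1 t^{-n}\int_{B(0,tr)}\int_{\R^n} f(z+x)\,d\mu(z)\,dx\,dt$, invokes Proposition~\ref{proposition:elem-upper-G} on each slice with $A=B(0,tr)$ and the growth bound $\varrho(\mu,B(0,tr))\leq A(tr)^\alpha$, and then evaluates $\int_0^1 t^{(\alpha-n)/p}\,dt=\frac{p}{p-n+\alpha}$. Your reduction, your use of the symmetry of $B(0,tr)$, and your bookkeeping of powers of $t$ and $r$ all match the paper's argument.
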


\begin{proof}
 We simply use the change of variable formula and Proposition~\ref{proposition:elem-upper-G} to calculate
  \begin{equation*}
   \begin{split}
     \int_{B(0,r)}\int_0^1\int_{\R^n} f(z+tx)\,d\mu(z)dt\,dx &= \int_0^1t^{-n} \int_{B(0,tr)}\int_{\R^n} f(z+x)\,d\mu(z)\,dx\,dt\\
     &\leq A^{\frac{1}{p}}r^{\frac{\alpha}{p}}  \left[\omega_nr^n\mu(\R^n)\right]^{\frac{p-1}{p}} \|f\|_{p, \Omega} \int_0^1 t^{\frac{-np + n(p-1) + \alpha}{p}}dt\\
     &= \frac{A^{\frac{1}{p}}p}{p-n+\alpha}r^{\frac{\alpha}{p}} \left[\omega_nr^n\mu(\R^n)\right]^{\frac{p-1}{p}} \|f\|_{p, \Omega}.
   \end{split}
  \end{equation*}
\end{proof}

We now generalize Proposition~\ref{prop:growth-integration-Euclidean} to the setting of Lie groups. For this, we first make the following technical definition. 

\begin{defn}\label{def:controlled-family-curves}
 Let $M$ be a manifold with distance $d$ coming from a Finsler metric, and let $x_0\in M$ and $r_0>0$. We say that the ball $B(x_0,r_0)$ admits a $(C,s, \lambda, \eta)$-controlled family of curves, where $C, \lambda \geq 1$ and $s, \eta>0$,  if there exists a Lipschitz map $$H: [0,1]\times B(x_0,r_0)\to (M,d)$$ such that for all $x, t$ we have $H(0,x)=H(t,x_0)=x_0$ and $H(1,x) = x$, and furthermore, the curve $t\mapsto H(t,x)$ is $\eta$-Lipschitz for every $x$; finally, for every $t\in(0,1]$ the map $H_t(x):= H(t,x)$ is injective, satisfies $H_t(B(x_0,r))\subset B(x_0, \lambda tr)$ for all $r\in(0,r_0)$, and the jacobian of $H_t$ is bounded by
 \begin{equation*}
  C^{-1} t^s \leq J_n(d_x H_t) \leq C t^s
 \end{equation*}
 for almost every $x\in B(x_0,r_0)$.
 \end{defn}

Note that if $B(x_0,r_0)$ admits a $(C,s, \lambda, \eta)$-controlled family of curves then so does $B(x_0, r)$ for every $r\in(0,r_0)$; indeed the restriction of $H$ to $[0,1]\times B(x_0,r)$ clearly defines a $(C,s, \lambda, \eta)$-controlled family of curves.
We now give several examples of  manifolds with controlled families of curves. 
\begin{enumerate}
\item Let $M$ be a manifold of dimension $n$ with a Finsler metric, and let $x_0\in M$ and $r_0>0$ be such that there exists a biLipschitz map $F: B_E(R) \to M$, where $B_E(R)$ is the Euclidean $n$-ball of radius $R$ centered at $0$, such that $F(0) = x_0$ and such that $B(x_0, r_0)\subset F(B_E(R))$. Then $B(x_0, r_0)$ admits a $(C^n, n, C, Cr_0)$-controlled family of curves, where $C$ only depends on the biLipschitz constant of $F$. Indeed, if $F$ is a $\bar{C}$-biLipschitz map as above then the map $H(t,x):= F(t\cdot F^{-1}(x))$ satisfies all the properties with $C=\bar{C}^2$. Note that, if $G$ is a Carnot group of topological dimension $n$ then the Lie exponential map is a global diffeomorphism, and thus for every $r_0$ there exists $C$ such that every ball $B(x_0, r)$ with $r\leq r_0$ admits a $(C^n, n, C, Cr)$-controlled family of curves. Likewise, if $M$ is a Riemannian manifold of dimension $n$, $x_0\in M$ and $0< r_0<\operatorname{injrad}_{x_0}(M)$ then the exponential map $\exp_{x_0}: B(0,r_0)\subset T_{x_0}M\to B(x_0, r_0)$ is a diffeomorphism, and hence $B(x_0, r)$ admits a $(C^n, n, C, Cr)$-controlled family of curves for every $0<r<r_0$, where $C$ is a constant.
\item Let $G$ be a Carnot group of step $c$ and homogeneous dimension $Q$, endowed with a left-invariant Finsler metric. Then there exists a constant $D$ such that every ball $B(x_0,r_0)$ in $G$ admits a $(1, Q, 1, D\tau(r_0))$-controlled family of curves, where  
  \begin{equation}\label{eq:def-tau-r}
     \tau(r):=\left\{\begin{array}{ll} 
   r & 0\leq r\leq 1\\
   r^c & 1< r.  \end{array}\right.
   \end{equation}
Indeed, one can prove that the map $H(t,x):= \delta_t(x)$, where $\delta_t$ is the dilatation homomorphism, satisfies all the desired properties for $x_0=e$, where $e$ denotes the identity element in $G$. Since left-translations are isometries the result follows. The only non-trivial part in the above is to prove the estimate on the Lipschitz constant. This is done in the lemma below.
\end{enumerate}

\begin{lemma}\label{lemma:Lip-distortions}
 Let $G$ be a Carnot group of step $c$, endowed with a left-invariant Finsler metric $d_0$. Then there exists a constant $D$ such that for all $x\in G$ the curve $\gamma:[0,1]\to G$ given by $\gamma(t):= \delta_t(x)$ is $D\tau(|x|)$-Lipschitz, where we have abbreviated $|x|:= d_0(e,x)$. 
\end{lemma}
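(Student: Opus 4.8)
The plan is to reduce the statement to a pointwise bound on the metric speed of $\gamma$. Write $W := \log x = W_1 + \dots + W_c$, where $W_i$ lies in the $i$-th layer $V_i$ of the stratified Lie algebra $\mathfrak{g}$ of $G$ and $c$ is the step. Since the dilation $\delta_t$ acts on $V_i$ as multiplication by $t^i$, we have $\gamma(t) = \exp\big(Y(t)\big)$ with $Y(t) := \sum_{i=1}^{c} t^i W_i$, so $\gamma$ is smooth on $[0,1]$, the metric speed $|\dot\gamma|(t)$ equals the left-invariant Finsler norm $\|\dot\gamma(t)\|_{d_0}$ for every $t$, and $d_0(\gamma(s),\gamma(t)) \le \length(\gamma|_{[s,t]}) = \int_s^t |\dot\gamma|(u)\,du$. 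Hence it suffices to produce a constant $D$, depending only on $G$ and $d_0$, with $|\dot\gamma|(t) \le D\,\tau(|x|)$ for all $t \in [0,1]$.

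To estimate $|\dot\gamma|(t)$ I would transport $\dot\gamma(t)$ to the identity. By the standard formula for the differential of $\exp$, $(dL_{\gamma(t)^{-1}})_{\gamma(t)}\,\dot\gamma(t) = \Phi\big(\operatorname{ad}_{Y(t)}\big)\big(\dot Y(t)\big)$, where $\Phi$ is a fixed polynomial with $\Phi(0) = \mathrm{id}$ (it is a polynomial because $\operatorname{ad}$ raises the grading, so $\operatorname{ad}_Z^{\,c} = 0$ on $\mathfrak g$) and $\dot Y(t) = \sum_{i=1}^c i\,t^{i-1}W_i$. Expanding $\operatorname{ad}_{Y(t)} = \sum_i t^i \operatorname{ad}_{W_i}$, every term of $\Phi(\operatorname{ad}_{Y(t)})\dot Y(t)$ has the form $(\text{const})\cdot t^{\,d-1}\operatorname{ad}_{W_{i_1}}\!\cdots\operatorname{ad}_{W_{i_k}}(W_j)$ with $d := i_1 + \dots + i_k + j \le c$ (otherwise the iterated bracket vanishes), so the exponent $d - 1$ is $\ge 0$ and $t^{d-1} \le 1$ for $t \in [0,1]$. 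Bounding each iterated bracket by a structure constant times $\prod_m \|W_{i_m}\|_e \cdot \|W_j\|_e$, and absorbing norm-equivalence factors on the finite-dimensional space $\mathfrak g$ into the constant, I obtain $|\dot\gamma|(t) \le C'\sum \|W_{i_1}\|_e\cdots\|W_{i_k}\|_e\,\|W_j\|_e$, a finite sum over index tuples with $i_1 + \dots + i_k + j \le c$, where $C'$ depends only on $G$ and $d_0$.

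It remains to prove the component estimate $\|W_l\|_e \le C_0 \max\{|x|,\,|x|^l\}$ for $1 \le l \le c$, with $C_0$ depending only on $G$ and $d_0$; granting this, each term in the previous sum is at most $C_0^{k+1}|x|^{k+1} \le C_0^{k+1}|x| = C_0^{k+1}\tau(|x|)$ when $|x| \le 1$ (as $k+1 \ge 1$), and at most $C_0^{k+1}|x|^{\,i_1+\dots+i_k+j} \le C_0^{k+1}|x|^c = C_0^{k+1}\tau(|x|)$ when $|x| > 1$, so summing the finitely many terms yields $|\dot\gamma|(t) \le D\,\tau(|x|)$. To establish the component estimate, choose a rectifiable curve $\sigma:[0,1]\to G$ from $e$ to $x$ of length at most $2|x|$, parametrized proportionally to arc length, set $Z := \log\circ\,\sigma$ and $v := (dL_{\sigma^{-1}})\dot\sigma$, so that $\|v(u)\|_e \le 2|x|$ for a.e. $u$. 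Inverting the $\exp$-differential formula gives $\dot Z(u) = \Psi\big(\operatorname{ad}_{Z(u)}\big)v(u)$ for a fixed polynomial $\Psi$ with $\Psi(0) = \mathrm{id}$. Since $\operatorname{ad}$ raises the grading, the $V_l$-component of $\Psi(\operatorname{ad}_Z)v$ involves $v$ together with only those components $Z_m$ with $m < l$; hence, inducting on $l$ and integrating, one gets $\|Z_l(u)\|_e \le K_l \sum_{k=1}^l |x|^k$ for $u \in [0,1]$, and in particular $\|W_l\|_e = \|Z_l(1)\|_e \le K_l\,l\,\max\{|x|,|x|^l\}$; taking $C_0 := \max_l K_l\, l$ finishes the proof.

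The main obstacle is this last component estimate for $|x|$ large, namely $\|W_l\|_e \lesssim |x|^l$ — it is the ``ball--box''/homogeneous-gauge comparison for simply connected nilpotent Lie groups and is the only genuinely non-formal ingredient; the inductive ODE argument sketched above is one way to obtain it, but it may alternatively be quoted from the literature on the large-scale geometry of nilpotent groups. All the remaining steps are bookkeeping with the grading of $\mathfrak g$ and norm equivalences on a finite-dimensional vector space.
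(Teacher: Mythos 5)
Your proof is correct, but it takes a genuinely different route from the paper's. The paper exploits the self-similar structure of the Carnot group: for $|x| \leq 1$ it works entirely inside a fixed ball, using only the local biLipschitz property of $\exp$ and the elementary fact that $t\mapsto\bar\delta_t(v)$ is Lipschitz with constant comparable to $|v|$; for $|x|>1$ it writes $\gamma(t)=\delta_{1/r}\circ\delta_t(\delta_r(x))$ with $r=|x|^{-1}$, reducing to the small-$|x|$ case at the cost of the Lipschitz constant of $\delta_{1/r}$, which is $\lesssim r^{-c}$ — this produces the factor $|x|^c=\tau(|x|)$ with essentially no computation. You instead bound $|\dot\gamma|(t)$ directly via the differential-of-$\exp$ formula, track the grading through the iterated brackets, and then need the ball--box estimate $\|W_l\|_e\lesssim\max\{|x|,|x|^l\}$, which you establish by an inductive ODE argument. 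The two are not independent: your ball--box estimate is exactly what the paper's dilation trick would give you for free (rescale a unit curve by $\delta_{|x|}$ and use the local biLipschitz bound on $\log$ near $e$), so the paper's route is both shorter and less computational, while yours is more explicit about where the derivative of $\gamma$ actually lives in the grading. Both yield the stated constant $D$; your sketch of the inductive ODE step is the only place where details are left out, and you correctly flag it as the one non-formal ingredient — if you want a self-contained proof you should either spell out that induction or replace it by the paper's rescaling argument.
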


\begin{proof}
 Let $\mathfrak{g} = V_1\oplus\dots\oplus V_c$ be a stratification of the Lie algebra $\mathfrak{g}$ of $G$.  Endow $\mathfrak{g}$ with an inner product such that the $V_j$ are pairwise orthogonal.  Let $\bar{\delta}_t: \mathfrak{g}\to\mathfrak{g}$ denote the Lie algebra homomorphism such that $\bar{\delta}_t(v) = t^j v$ for every $v\in V_j$ and every $j=1,\dots, c$ . Then the dilatation homomorphism $\delta_t$ satisfies $\delta_t \circ\exp = \exp\circ\bar{\delta}_t$, where $\exp\mathfrak{g}\to G$ is the  Lie exponential map. Note that $\exp$  is a diffeomorphism and, in particular, a local biLipschitz homeomorphism. Let $R>0$ be large enough so that $\exp(B(0,R))$ contains the unit ball around the identity in $G$. Let $C$ be the biLipschitz constant of $\exp|_{B(0,R)}$. Let $v\in B(0,R)$ be such that $x=\exp(v)$. It is straightforward to check that the map $t\mapsto \bar{\delta}_t(v)$ is $|v|$-Lipschitz on $[0,1]$. It thus follows that 
 $$d(\delta_t(x), \delta_s(x)) = d(\exp(\bar{\delta}_t(v)), \exp(\bar{\delta}_s(v)))\leq C |\bar{\delta}_t(v) - \bar{\delta}_s(v)| \leq C^2|t-s|\,|x|$$ and hence the claim (with $D= C^2$) in the case that $|x|\leq 1$. Now, suppose that $|x|>1$. Define $r:= |x|^{-1}$. We then have that $|\delta_r(x)|\leq r|x| = 1$ and $$\gamma(t) = \delta_{\frac{1}{r}}\circ \delta_t(\delta_r(x)).$$ Since $\delta_{\frac{1}{r}}$ is $(1/r)^c$-Lipschitz it thus follows with the above that $\gamma$ is $Dr^{-c}$-Lipschitz, as claimed.
\end{proof}

We now prove the following generalization of Proposition~\ref{prop:growth-integration-Euclidean}.

\begin{proposition}\label{proposition:growth-integration-variant}
 Let $G$ be a Lie group of dimension $n$, endowed with a left-invariant Finsler metric. Let $r_0>0$ and suppose $B(e, r_0)$ admits a $(C,s, \lambda, \eta)$-controlled family of curves, defined by a Lipschitz map $H$ as in Definition~\ref{def:controlled-family-curves}. Let $\mu$ be a finite Borel measure on $G$ such that, for some $A>0$, $\alpha\in[0, s]$, and $p>\max\{1, s-\alpha\}$,
 \begin{equation}\label{eq:weak-growth-condition-mu}
  \int_G\mu(B(z,r))^{\frac{1}{p-1}}d\mu(z)\leq A^{\frac{1}{p-1}} r^{\frac{\alpha}{p-1}}\mu(G)
 \end{equation}
 for all $r\in(0,2\lambda r_0)$. Then for every Borel measurable function $f:G\to[0,\infty]$ and every $r\in(0,r_0)$ we have
 \begin{equation*}
  \int_{B(e,r)}\int_0^1\int_G f(zH_t(x))d\mu(z)dt\,d\haus^n(x) \leq \bar{A}\, r^{\frac{\alpha}{p}} \left[\haus^n(B(e,r))\mu(G)\right]^{\frac{p-1}{p}} \|f\|_{p, \Omega},
 \end{equation*}
 where $$\bar{A} = \frac{C^2A^{\frac{1}{p}}(2\lambda)^{\frac{\alpha}{p}}p}{p-s+\alpha}$$ and $\Omega = (\spt\mu)\cdot B(e,\lambda r) = N(\spt\mu, \lambda r)$.
\end{proposition}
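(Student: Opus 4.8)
The plan is to follow the scheme of the proof of Proposition~\ref{prop:growth-integration-Euclidean}, replacing the Euclidean dilation $x\mapsto tx$ by the maps $H_t(x):=H(t,x)$ of the controlled family of curves. First I would use the Fubini--Tonelli theorem (all integrands are non-negative and Borel, $\mu$ is finite, and $\haus^n$ is finite on the compact ball $B(e,r)$) to interchange the integrations and reduce to bounding, for each fixed $t\in(0,1]$, the quantity
\[
 I(t):=\int_{B(e,r)}\int_G f(zH_t(x))\,d\mu(z)\,d\haus^n(x),
\]
and then integrating the resulting bound against $dt$ over $(0,1)$.

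To estimate $I(t)$, set $g(y):=\int_G f(zy)\,d\mu(z)$. Using the lower jacobian bound $J_n(d_xH_t)\ge C^{-1}t^s$ of Definition~\ref{def:controlled-family-curves} in the form $1\le C t^{-s}J_n(d_xH_t)$, followed by the area formula (legitimate since $H_t$ is Lipschitz and injective on $B(e,r)$, so in particular $H_t(B(e,r))$ is compact, hence Borel), I obtain
\[
 I(t)=\int_{B(e,r)}g(H_t(x))\,d\haus^n(x)\le C t^{-s}\int_{B(e,r)}g(H_t(x))\,J_n(d_xH_t)\,d\haus^n(x)=C t^{-s}\int_{H_t(B(e,r))}g(y)\,d\haus^n(y).
\]
Then I would apply Proposition~\ref{proposition:elem-upper-G-variant} with the Borel set $A:=H_t(B(e,r))$. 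Three observations make this work cleanly. First, $H_t(B(e,r))\subset B(e,\lambda tr)$ by Definition~\ref{def:controlled-family-curves}, hence $H_t(B(e,r))H_t(B(e,r))^{-1}\subset B(e,\lambda tr)B(e,\lambda tr)^{-1}$, and the computation in the remark following Proposition~\ref{proposition:elem-upper-G-variant} gives $z\,H_t(B(e,r))H_t(B(e,r))^{-1}\subset B(z,2\lambda tr)$ for all $z$; since $2\lambda tr<2\lambda r_0$, the growth hypothesis \eqref{eq:weak-growth-condition-mu} then bounds $\int_G\mu\big(z\,H_t(B(e,r))H_t(B(e,r))^{-1}\big)^{1/(p-1)}\,d\mu(z)$ by $A^{1/(p-1)}(2\lambda tr)^{\alpha/(p-1)}\mu(G)$. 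Second, the upper jacobian bound gives $\haus^n(H_t(B(e,r)))=\int_{B(e,r)}J_n(d_xH_t)\,d\haus^n\le C t^s\,\haus^n(B(e,r))$. Third, $(\spt\mu)\cdot H_t(B(e,r))\subset(\spt\mu)\cdot B(e,\lambda r)=\Omega$, the last equality being a consequence of left-invariance of the metric. Inserting these into Proposition~\ref{proposition:elem-upper-G-variant}, collecting the exponents of $t$ (one checks $-s+s\tfrac{p-1}{p}+\tfrac{\alpha}{p}=\tfrac{\alpha-s}{p}$) and bounding the constant $C\cdot C^{(p-1)/p}=C^{2-1/p}\le C^2$ (as $C\ge1$) gives
\[
 I(t)\le C^2 A^{1/p}(2\lambda)^{\alpha/p}\,t^{(\alpha-s)/p}\,r^{\alpha/p}\,[\haus^n(B(e,r))\,\mu(G)]^{(p-1)/p}\,\|f\|_{p,\Omega}.
\]

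Finally I would integrate this over $t\in(0,1)$: since $p>\max\{1,s-\alpha\}$ we have $(\alpha-s)/p>-1$, so $\int_0^1 t^{(\alpha-s)/p}\,dt=\tfrac{p}{p-s+\alpha}$ is finite, and multiplying through produces exactly the claimed estimate with $\bar A=\tfrac{C^2A^{1/p}(2\lambda)^{\alpha/p}p}{p-s+\alpha}$. I expect the only genuinely non-routine step to be the change of variables: one must use the injectivity of $H_t$ together with the two-sided jacobian bounds to trade the integral of $g\circ H_t$ over $B(e,r)$ for an honest integral of $g$ over $H_t(B(e,r))$, and then notice that although $H_t(B(e,r))$ is not a metric ball, its containment in $B(e,\lambda tr)$ suffices to control both its $\haus^n$-measure and the product set $AA^{-1}$ entering Proposition~\ref{proposition:elem-upper-G-variant}. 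Everything else is the same elementary bookkeeping of exponents as in the Euclidean case.
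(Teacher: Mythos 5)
Your proof is correct and follows essentially the same route as the paper's: Fubini--Tonelli, the change of variables via the two-sided Jacobian bounds, Proposition~\ref{proposition:elem-upper-G-variant} applied to $A=H_t(B(e,r))$ together with the containment $H_t(B(e,r))\subset B(e,\lambda t r)$, and finally integration of $t^{(\alpha-s)/p}$ over $(0,1)$. The only cosmetic difference is that you insert $1\le C t^{-s}J_n(d_xH_t)$ and then apply the area formula, whereas the paper writes the change of variables as an equality involving $J_n(d_{H_t^{-1}(x)}H_t)^{-1}$ and then bounds that factor; these are the same computation, and your final constant $C^2$ matches the paper's $\bar A$ after bounding $C^{2-1/p}\le C^2$.
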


Note that \eqref{eq:weak-growth-condition-mu} is exactly \eqref{eq:growth-mass-T} when $\mu=\|T\|$. Note also that \eqref{eq:weak-growth-condition-mu} is for example satisfied if
\begin{equation*}
  \mu(B(z,r))\leq Ar^\alpha
 \end{equation*}
 for all $r\in(0,2\lambda r_0)$ and all $z\in G$. Note furthermore that the value of $\eta$ is of no importance in the above proposition and it does not appear in the estimate. It will only be of importance when we use the above proposition in Section \ref{section:tech-est-cochains}.

\begin{proof}
 Let $H:[0,1]\times B(e, r_0)\to G$ be the Lipschitz map defining the controlled family of curves. Fix $r\in(0,r_0)$ and note that the restriction of $H$ to $[0,1]\times B(e, r)$ defines a $(C,s, \lambda, \eta)$-controlled family of curves on $B(e,r)$. Note that $$\haus^n(H_t(B(e,r))) \leq C t^s \haus^n(B(e,r)).$$
 We now use the change of variable formula and Proposition~\ref{proposition:elem-upper-G-variant} in order to compute, with $B:= B(e,r)$, that
\begin{equation*}
  \begin{split}
    \int_{B(e,r)}\int_0^1&\int_G f(zH_t(x))d\mu(z)dt\,d\haus^n(x) \\
     &=  \int_0^1\int_{H_t(B)}\int_Gf(zx) J_n(d_{H_t^{-1}(x)}H_t)^{-1} d\mu(z)d\haus^n(x)dt\\
     &\leq C  \int_0^1t^{-s} \int_{H_t(B)}\int_Gf(zx)  d\mu(z)d\haus^n(x)dt\\
     &\leq C \|f\|_{p,\Omega} \int_0^1t^{-s}\haus^n(H_t(B))^{\frac{p-1}{p}}  \left(\int_G \mu(zH_t(B)H_t(B)^{-1})^{\frac{1}{p-1}}d\mu(z)\right)^{\frac{p-1}{p}}dt\\
     &\leq C^{2-\frac{1}{p}} \|f\|_{p,\Omega} \haus^n(B)^{\frac{p-1}{p}} \int_0^1t^{-\frac{s}{p}} \left(\int_G \mu(B(z,2\lambda tr))^{\frac{1}{p-1}}d\mu(z)\right)^{\frac{p-1}{p}}dt\\
     &\leq C^{2-\frac{1}{p}} A^{\frac{1}{p}}(2\lambda)^{\frac{\alpha}{p}}r^{\frac{\alpha}{p}} \left[\haus^n(B)\mu(G)\right]^{\frac{p-1}{p}} \|f\|_{p, \Omega}\int_0^1t^{\frac{\alpha-s}{p}}dt\\
     &= \frac{C^{2-\frac{1}{p}}A^{\frac{1}{p}}(2\lambda)^{\frac{\alpha}{p}}p}{p-s+\alpha}r^{\frac{\alpha}{p}} \left[\haus^n(B(e,r))\mu(G)\right]^{\frac{p-1}{p}} \|f\|_{p, \Omega}.
  \end{split}
 \end{equation*}
\end{proof}


\subsection{Technical estimates for cochains and the proof of H\"older continuity}\label{section:tech-est-cochains}

In this section we will use the results from the previous section in order to prove Theorems~\ref{hoeldercont2} and \ref{hoeldercont1}. 

Let $G$ be a Lie group, endowed with a left-invariant Finsler metric $d_0$. For $x\in G$ let $\varphi_x$ denote the right-multiplication map by $x$, that is, $\varphi_x(z):= zx$. Define a function $\bar{\tau}_G:[0,\infty)\to[0,\infty)$ by
\begin{equation*}
 \bar{\tau}_G(r):= \max\left\{\|\operatorname{Ad}_x \|: x\in B(e, r) \right\},
\end{equation*}
where $\operatorname{Ad}_x$ is the adjoint, that is, $\operatorname{Ad}_x = d_e\Psi_x$ with $\Psi_x(z):= xzx^{-1}$, and where $\|\cdot\|$ denotes the operator norm on $T_eG$. In the following we will write $\bar{\tau}(r)$ instead of $\bar{\tau}_G(r)$ if there is no risk of ambiguity. It is easy to check that $\varphi_x$ is $\bar{\tau}_G(|x|)$-Lipschitz, where $|x|:= d_0(e,x)$. In general, it seems difficult to determine an explicit upper bound for $\bar{\tau}_G(r)$, however, in the following case this is possible.

\begin{lemma}\label{lemma:Lip-right-mult-Carnot}
 Let $G$ be a Carnot group of step $c$, endowed with a left-invariant Finsler metric. Then there exists a constant $D$ such that 
         \begin{equation}\label{eq:def-tau-bar-r}
     \bar{\tau}_G(r)\leq \left\{\begin{array}{ll} 
   D & 0\leq r\leq 1\\
   Dr^{c-1} & 1< r.  \end{array}\right.
   \end{equation} 
\end{lemma}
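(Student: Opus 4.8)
The plan is to reduce the estimate on large balls to the unit ball, using that the dilations $\delta_t$ are group automorphisms of $G$ which scale the Finsler distance by the factor $t$. On the unit ball the bound is immediate: the (closed) ball $B(e,1)$ is compact, since a left-invariant Finsler distance on $G$ is proper and induces the manifold topology, and $x\mapsto\operatorname{Ad}_x=d_e\Psi_x$ is smooth; hence $D:=\bar\tau_G(1)=\max\{\|\operatorname{Ad}_x\|:x\in B(e,1)\}$ is finite. Since $\bar\tau_G$ is visibly nondecreasing in $r$, this already gives $\bar\tau_G(r)\le D$ for $0\le r\le 1$.

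The key algebraic input for the range $r>1$ is a scaling identity for $\operatorname{Ad}$ under dilations. As in the proof of Lemma~\ref{lemma:Lip-distortions}, fix a stratification $\mathfrak g=V_1\oplus\dots\oplus V_c$ and let $\bar\delta_t:\mathfrak g\to\mathfrak g$ be the Lie algebra automorphism acting as $t^j$ on $V_j$; recall $\delta_t\circ\exp=\exp\circ\bar\delta_t$ and $d_e\delta_t=\bar\delta_t$. Since each $\delta_t$ is an automorphism of $G$, one checks directly that $\Psi_{\delta_t(x)}=\delta_t\circ\Psi_x\circ\delta_{1/t}$ for every $x\in G$ and every $t>0$; differentiating at the identity (all three maps fix $e$) yields
\[
 \operatorname{Ad}_{\delta_t(x)}=\bar\delta_t\circ\operatorname{Ad}_x\circ\bar\delta_t^{-1},
 \qquad\text{equivalently}\qquad
 \operatorname{Ad}_x=\bar\delta_t^{-1}\circ\operatorname{Ad}_{\delta_t(x)}\circ\bar\delta_t .
\]

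To conclude, choose the inner product on $\mathfrak g$ making the $V_j$ pairwise orthogonal (changing the norm on $T_eG$ alters operator norms only by bounded factors, absorbed into $D$), so that $\bar\delta_t$ is diagonal with eigenvalues $t^1,\dots,t^c$, whence $\|\bar\delta_t\|=t$ for $0<t\le1$ and $\|\bar\delta_t\|=t^c$ for $t\ge1$. Given $r>1$ and $x\in B(e,r)$, set $t:=1/r\le1$; then $\delta_t(x)\in B(e,tr)=B(e,1)$, so $\|\operatorname{Ad}_{\delta_t(x)}\|\le D$, and the displayed identity gives
\[
 \|\operatorname{Ad}_x\|\le\|\bar\delta_t^{-1}\|\,\|\operatorname{Ad}_{\delta_t(x)}\|\,\|\bar\delta_t\|
 \le\|\bar\delta_r\|\cdot D\cdot\tfrac1r=r^c\cdot D\cdot r^{-1}=D\,r^{c-1}.
\]
Taking the maximum over $x\in B(e,r)$ proves \eqref{eq:def-tau-bar-r}.

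The two points requiring care are exactly those flagged above: first, that $B(e,1)$ is compact, so that $\bar\tau_G(1)<\infty$, which rests on the left-invariant Finsler distance being proper and compatible with the manifold topology; and second, the precise behavior of the operator norms $\|\bar\delta_t\|$ for $t\le1$ and $t\ge1$, which is why one works with the stratification-adapted inner product as in Lemma~\ref{lemma:Lip-distortions}. Everything else is a direct computation, and I do not expect any further obstruction.
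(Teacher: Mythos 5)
Your proof is correct and follows essentially the same dilation argument as the paper: reduce the case $r>1$ to the unit ball by conjugating with dilations, using that $\delta_t$ is a group automorphism and that $\bar\delta_t$ acts as $t^j$ on the stratum $V_j$. The only cosmetic difference is that you conjugate $\operatorname{Ad}_x$ directly at the Lie-algebra level via $\operatorname{Ad}_x=\bar\delta_t^{-1}\circ\operatorname{Ad}_{\delta_t(x)}\circ\bar\delta_t$, whereas the paper factors the right-multiplication map $\varphi_x=\delta_{1/r}\circ\varphi_{\delta_r(x)}\circ\delta_r$ and multiplies Lipschitz constants at the group level before translating back to $\operatorname{Ad}_x$; the two computations are parallel and give the same bound.
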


\begin{proof}
 Denote the left-invariant Finsler metric by $d_0$. It is clear from the above that there exists $D$ such that $\bar{\tau}_G(r)\leq D$ for all $0<r\leq 1$. Now, let $x\in G$ with $d_0(x, e)>1$. Set $r:= d_0(x,e)^{-1}$ and note that $\delta_r$ is $r$-Lipschitz while $\delta_{1/r}$ is $r^{-c}$-Lipschitz. Since $d_0(\delta_r(x), e)\leq 1$ and $$\varphi_x = \delta_{1/r} \circ \varphi_{\delta_r(x)} \circ \delta_r$$ it follows immediately that $\varphi_x$ is $Dr^{-(c-1)}$-Lipschitz. Finally, since left-multiplication is an isometry we conclude that $$\|\operatorname{Ad}_x(v)\| \leq Dr^{-(c-1)} \|v\|$$ for every $v\in T_eG$ and hence the claim.
\end{proof}

\begin{lemma}\label{lemma:sobolev-omega-translates}
 Let $G$ be a Lie group of dimension $n$, endowed with a left-invariant Finsler metric. Let $0\leq m\leq n-1$ and $\mathcal{C} = (\mathcal{C}_m, \mathcal{C}_{m+1})$ with either $\mathcal{C}_k = \AKnc_k(G)$ or $\mathcal{C}_k=\AKic_k(G)$ for $k=m, m+1$. Suppose $\omega$ is a cochain on $\mathcal{C}$ and $T\in\mathcal{C}$. Then the function $u: G\to[0,\infty]$ defined by $u(x):= |\omega(\varphi_{x\#}T)|$ has the following properties:
 \begin{enumerate}
  \item[(i)] if $\omega\in\mathcal{W}_{q,p}(\mathcal{C})$ for some $1\leq p,q<\infty$ then $u\in W^{1,\kappa}_{\rm loc}(G)$ with $\kappa=\min\{p,q\}$;
    \item[(ii)] if $\omega\in\mathcal{W}_{p}(\mathcal{C})$ for some $1\leq p<\infty$ and if $\fillvol(T, \mathcal{C}_{m+1})<\infty$ then $u\in W^{1,p}_{\rm loc}(G)$.
   \end{enumerate}
 \end{lemma}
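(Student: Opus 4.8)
The plan is, in each of the two cases, to exhibit an explicit upper gradient of $u$ in the sense of Heinonen--Koskela which lies in $L^\kappa_{\rm loc}(G)$, together with a pointwise bound on $u$ by an $L^1_{\rm loc}$ function; since $G$, being a complete Lie group with a left-invariant Finsler metric, is locally bi-Lipschitz to $\R^n$ and carries $\haus^n$ as a left Haar measure, the Newtonian and classical Sobolev spaces coincide locally on $G$ and the conclusion follows (cf. \cite{Shan}, \cite{HKST}). For the upper gradient I would argue along curves. Fix $p,q\in G$ and a Lipschitz curve $\gamma$ from $p$ to $q$, parametrised by arc length. Since $\omega$ is a cochain, $|u(p)-u(q)|\le|\omega(\varphi_{q\#}T-\varphi_{p\#}T)|$. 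Fill the current $\varphi_{q\#}T-\varphi_{p\#}T$ by the cone-type current $S:=\Psi_\#([0,\ell]\times T)\in\mathcal{C}_{m+1}$, where $\Psi(t,z):=z\gamma(t)=\varphi_{\gamma(t)}(z)$ is Lipschitz because $\Psi(t,\cdot)=\varphi_{\gamma(t)}$ is $\bar{\tau}_G(|\gamma(t)|)$-Lipschitz, with $\sup_t\bar{\tau}_G(|\gamma(t)|)<\infty$ by compactness of the image of $\gamma$, while $\Psi(\cdot,z)$ is $1$-Lipschitz since left translations are isometries. From $\bdry([0,\ell]\times T)=[\ell]\times T-[0]\times T-[0,\ell]\times\bdry T$ (the last term absent when $m=0$) one gets $\bdry S=\varphi_{q\#}T-\varphi_{p\#}T-\Psi_\#([0,\ell]\times\bdry T)$. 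In case (ii) the hypothesis $\fillvol(T,\mathcal{C}_{m+1})<\infty$, i.e. $T=\bdry W$ for some $W\in\mathcal{C}_{m+1}$, forces $\bdry T=0$, so the correction term vanishes; in case (i) it is retained.

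Next, apply the upper gradient $g$ of $\omega$ to $\bdry S$ with filling $S$, and — in case (i) — the upper norm $h$ to the $m$-current $\Psi_\#([0,\ell]\times\bdry T)\in\mathcal{C}_m$, to obtain $|u(p)-u(q)|\le\int_G g\,d\|S\|+\int_G h\,d\|\Psi_\#([0,\ell]\times\bdry T)\|$. Lemma~\ref{lemma:mass-estimate-push-cone} gives $\|S\|\le(m+1)\Psi_\#(\lambda^m\lm^1\times\|T\|)$ and $\|\Psi_\#([0,\ell]\times\bdry T)\|\le m\,\Psi_\#(\lambda^{m-1}\lm^1\times\|\bdry T\|)$ with $\lambda(t)=\bar{\tau}_G(|\gamma(t)|)$; unwinding the push-forward and using Fubini turns the right-hand side into $\int_\gamma\rho\,ds$, where
\[
\rho(w):=(m+1)\,\bar{\tau}_G(|w|)^m\!\int_G g(zw)\,d\|T\|(z)\;+\;m\,\bar{\tau}_G(|w|)^{m-1}\!\int_G h(zw)\,d\|\bdry T\|(z),
\]
the second summand dropped in case (ii) and whenever $m=0$. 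Since $p,q$ and $\gamma$ were arbitrary, $\rho$ is an upper gradient of $u$.

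It remains to check integrability. On a ball $B\subset G$ the weight $\bar{\tau}_G(|\cdot|)$ is bounded and $\haus^n(B)<\infty$; combining this with Minkowski's integral inequality and left-invariance of $\haus^n$ — which reduces $\int_B g(z\cdot)^p\,d\haus^n$ to $\int_{zB}g^p\,d\haus^n\le\|g\|_{L^p(G)}^p$, and likewise for $h$ — yields $\|\rho\|_{L^p(B)}\lesssim\|g\|_{L^p(G)}\mass(T)+\|h\|_{L^q(G)}\mass(\bdry T)<\infty$, finite since metric currents and boundaries of normal currents have finite mass; hence $\rho\in L^\kappa_{\rm loc}(G)$ with $\kappa=\min\{p,q\}$ in case (i) and $\kappa=p$ in case (ii). For the pointwise bound, $\|\varphi_{x\#}T\|\le\bar{\tau}_G(|x|)^m\,\varphi_{x\#}\|T\|$ gives $u(x)\le\bar{\tau}_G(|x|)^m\int_G h(zx)\,d\|T\|(z)$ in case (i) via the upper norm, and, since $\bdry(\varphi_{x\#}W)=\varphi_{x\#}T$, $u(x)\le\bar{\tau}_G(|x|)^{m+1}\int_G g(zx)\,d\|W\|(z)$ in case (ii) via the upper gradient; both lie in $L^1_{\rm loc}(G)$ by the same Fubini/left-invariance computation, so $u<\infty$ a.e. and $u\in L^1_{\rm loc}(G)$. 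Being absolutely continuous along $\haus^1$-almost every curve (by the upper gradient inequality just proved) and dominated by an $L^1_{\rm loc}$ function, $u$ then lies in $N^{1,\kappa}_{\rm loc}(G)=W^{1,\kappa}_{\rm loc}(G)$.

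I expect the main obstacle to be controlling, locally in $G$, the mass of the cone filling $S$ and — in case (i) — of the correction term $\Psi_\#([0,\ell]\times\bdry T)$: this is exactly where one must split off the right-translation distortion $\bar{\tau}_G$, which is only locally bounded, exploit that left translation is an isometry of both $d$ and $\haus^n$, and, for the correction term, use that $\omega$ has an \emph{upper norm} rather than merely an upper gradient. This dichotomy accounts for the exponent $\min\{p,q\}$ in (i) against $p$ in (ii), and for why (ii) must instead assume $\fillvol(T,\mathcal{C}_{m+1})<\infty$ — a hypothesis used both to annihilate the correction term and to bound $u$ itself through a filling $W$ of $T$.
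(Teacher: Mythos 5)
Your strategy is the same as the paper's: take $\rho(w)=(m+1)\,\bar\tau_G(|w|)^m\int_G g(zw)\,d\|T\|(z)+m\,\bar\tau_G(|w|)^{m-1}\int_G h(zw)\,d\|\bdry T\|(z)$ as candidate upper gradient, verify the upper gradient inequality via the cone $\psi_\#([0,1]\times T)$ with $\psi(t,z)=z\gamma(t)$ and Lemma~\ref{lemma:mass-estimate-push-cone}, and check local $L^\kappa$ integrability by Fubini / change of variables. Your isolation of the distortion factor $\bar\tau_G$, the observation that $\fillvol(T,\mathcal{C}_{m+1})<\infty$ kills the boundary correction in case (ii), and the Minkowski/Jensen argument for $\rho\in L^\kappa_{\rm loc}$ all match the paper. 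However, there are two real gaps.

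First, you never establish that $u$ is measurable. You bound $u$ pointwise by a measurable locally integrable function and conclude $u\in L^1_{\rm loc}$; but a non-measurable function can be dominated by an $L^1_{\rm loc}$ function, and a priori $x\mapsto|\omega(\varphi_{x\#}T)|$ is just an unstructured composition of a sublinear set function $\omega$ with a map into $\mathcal{C}_m$ — there is no continuity or Borel assumption on $\omega$ to propagate. The paper handles this by first showing $\bar v$ is an upper gradient and then invoking \cite[Theorem 1.11]{Jarvenpaaetal}, which says that on a space supporting a local Poincar\'e inequality a function with a locally integrable upper gradient is automatically measurable and locally integrable; only then is \cite{Shan} applied. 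Your proof needs something equivalent, or a direct argument for Borel/Lebesgue measurability of $u$.

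Second, your function $\rho$ is not quite an upper gradient of $u$ in the strict sense: if a rectifiable curve $\gamma$ has both endpoints $a,b$ with $u(a)=u(b)=\infty$ — which is not excluded, since $\{u=\infty\}$ is only known to be a null set and could have positive $\haus^1$-measure — then the Newtonian-space definition requires $\int_\gamma\rho\,ds=\infty$, and your $\rho$ does not guarantee this. The paper handles this by replacing $v$ with $\bar v$ which is set to $+\infty$ on a Borel null set containing $\{u=\infty\}$, and then treating the three cases ($u(a)$ or $u(b)$ finite; some interior $u(c)$ finite; $u\equiv\infty$ on $|\gamma|$) separately. This is a small but genuine repair that your write-up omits. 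The rest — the local bi-Lipschitz identification of $G$ with $\R^n$ in place of the Poincar\'e argument, and Minkowski in place of Jensen — are equivalent alternatives.
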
 

In case $\omega$ is a linear cochain then statements (i) and (ii) also hold for $u(x):=\omega(\varphi_{x\#}T)$. In case $\fillvol(T, \mathcal{C}_{m+1})<\infty$  then it is in fact enough if $\omega$ is a cochain on $\mathcal{C}^0$. Note that in statement (ii) one cannot replace the condition $\fillvol(T, \mathcal{C}_{m+1})<\infty$ by $\bdry T = 0$ in general.

\begin{proof}
 We only prove statement (i) because the proof of statement (ii) is analogous.
 Let $h \in L^q(G)$ be an upper norm of $\omega$ and $g\in L^p(G)$ be an upper gradient of $\omega$. We first show that $u(x)$ is finite for almost every $x\in G$. For this suppose to the contrary that there exists a Borel set $B\subset G$ of strictly positive measure such that $u(x)=\infty$ for every $x\in B$. We may assume without loss of generality that $B$ is contained in the ball $B(e,r)$ for some $r<\infty$. Since $$\|\varphi_{x\#}T\|\leq \bar{\tau}(|x|)^m\varphi_{x\#}\|T\|\leq \bar{\tau}(r)^m\varphi_{x\#}\|T\|$$ for every $x\in B$ we obtain from Proposition~\ref{proposition:elem-upper-G} that 
\begin{equation*}
 \begin{split}
  \infty &= \int_Bu(x)d\haus^n(x)\\
   &\leq \bar{\tau}(r)^m \int_B\int_Gh(zx)d\|T\|(z)d\haus^n(x)\\
   & \leq \bar{\tau}(r)^m \|h\|_p\mass(T)[\haus^n(B)]^{\frac{p-1}{p}},
  \end{split}
 \end{equation*}
which gives a contradiction. This shows that $u(x)$ is indeed finite for almost every $x\in G$.
Now, define a function $v: G\to[0,\infty]$ by 
\begin{equation*}
 v(x):= (m+1)\bar{\tau}(|x|)^m \int_G g(zx) d\|T\|(z) + m\bar{\tau}(|x|)^{m-1}\int_G h(zx)d\|\partial T\|(z).
\end{equation*}
Then $v$ is Borel measurable and locally in $L^\kappa(G)$ since, by Jensen inequality and Lemma~\ref{lemma:fubini-G},
\begin{equation*}
 \begin{split}
 \int_{B(e, r)}&\left(\bar{\tau}(|x|)^m \int_G g(zx)d\|T\|(z)\right)^pd\haus^n(x)\\
 &\leq \bar{\tau}(r)^{pm} \mass(T)^{p-1}  \int_{B(e, r)}\int_G g(zx)^pd\|T\|(z)d\haus^n(x)\\
&\leq \bar{\tau}(r)^{pm} \mass(T)^p \int_Gg(x)^pd\haus^n(x)\\
 &= \bar{\tau}(r)^{pm} \mass(T)^p \|g\|_p^p <\infty,
\end{split}
\end{equation*}
and analogously, in the case $m\geq 1$,
\begin{equation*}
  \int_{B(e, r)}\left(\bar{\tau}(|x|)^{m-1} \int_G h(zx)d\|\bdry T\|(z)\right)^qd\haus^n(x) \leq \bar{\tau}(r)^{q(m-1)} \mass(\partial T)^q \|\omega\|_q^q<\infty
\end{equation*}
for every $r>0$.
Now, let $B\subset G$ be a Borel set with $\haus^n(B)=0$ and such that $u(x)<\infty$ for all $x\not\in B$. Define $\bar{v}$ by $\bar{v}(x) = v(x)$ if $x\not\in B$ and $\bar{v}(x) = \infty$ if $x\in B$. It follows that $\bar{v}$ is Borel measurable and locally in $L^\kappa(G)$. We show that $\bar{v}$ is an upper gradient of the function $u$. For this, let $a,b\in G$ and let $\gamma:[0,1]\to G$ be a rectifiable curve joining $a$ and $b$, parameterized proportional to arc-length. We must show that $$|u(b) - u(a)|\leq \int_0^1\bar{v}\circ\gamma(t) |\dot{\gamma}(t)|dt,$$ where it is understood that the right hand side must equal $\infty$ in case $u(a)=\infty$ or $u(b)=\infty$.
Define $\psi:[0,1]\times G\to G$ by $\psi(t,z):= z\gamma(t)$ and note that $\psi(t, \cdot)$ is $\bar{\tau}(|\gamma(t)|)$-Lipschitz for every $t\in[0,1]$ and $\psi(\cdot, z)$ is $\Lipconst(\gamma)$-Lipschitz for every $z\in G$. Define $S:= \psi_{\#}([0,1]\times T)$. If $m\geq 1$ define $R:= \psi_{\#}([0,1]\times \partial T)$, if $m=0$ then set $R=0$. If $T$ is a normal current then so are $S$ and $R$. If $T$ is an integral current then so are $S$ and $R$.
Clearly, we have $$\varphi_{b\#}T - \varphi_{a\#}T = \partial S + R.$$ It follows from Lemma~\ref{lemma:mass-estimate-push-cone} that $$\|S\| \leq (m+1)\Lipconst(\gamma)\; \psi_{\#}\left[\bar{\tau}(|\gamma(\cdot)|)^m\lm^1\times \|T\|\right]$$ and $$\|R\| \leq m\Lipconst(\gamma)\; \psi_{\#}\left[\bar{\tau}(|\gamma(\cdot)|)^{m-1} \lm^1\times \|\partial T\|\right].$$ If $u(a)<\infty$ or $u(b)<\infty$ we conclude that
\begin{equation*}
 \begin{split}
  |u(b) - u(a)| &= |\,|\omega(\varphi_{b\#}T)| - |\omega(\varphi_{a\#}T)|\,|\\
  &\leq |\omega(\partial S)| + |\omega(R)|\\
  &\leq \int_G g(z)d\|S\|(z) + \int_G h(z)d\|R\|(z)\\
  &\leq  (m+1)\Lipconst(\gamma) \int_0^1 \int_G g(z\gamma(t)) \bar{\tau}(|\gamma(t)|)^m d\|T\|(z)dt\\
 &\quad +  m\Lipconst(\gamma) \int_0^1 \int_G h(z\gamma(t)) \bar{\tau}(|\gamma(t)|)^{m-1} d\|\partial T\|(z)dt\\
  &\leq \int_0^1\bar{v}\circ\gamma(t) |\dot{\gamma}(t)|dt.
 \end{split}
\end{equation*}
Now suppose that $u(a) = u(b)=\infty$. If there exists a point $c$ in the image of $\gamma$ such that $u(c)<\infty$ then it follows as above (by replacing $a$ by $c$) that $$\int_0^1\bar{v}\circ\gamma(t) |\dot{\gamma}(t)|dt=\infty,$$ and this clearly also holds if $u=\infty$ everywhere on the image of $\gamma$. This shows that $\bar{v}$ is an upper gradient for $u$.
 Since every ball in $G$ of sufficiently small radius (independent of the center) admits a weak $1$-Poincar\'e inequality it follows from \cite[Theorem 1.11]{Jarvenpaaetal} that $u$ is measurable and locally integrable. Furthermore, by \cite{Shan}, a locally integrable function with locally $\kappa$-integrable upper gradient has a representative in $W^{1,\kappa}_{\operatorname{loc}}$. The proof is complete.
\end{proof}

Given $\omega$ and $T$ as in Lemma~\ref{lemma:sobolev-omega-translates} we may define
\begin{equation*}
 \omega_+(T, r):=  \frac{1}{\haus^n(B(e,r))}\int_{B(e,r)} |\omega(\varphi_{x\#}T)|\,d\haus^n(x)
\end{equation*}
for $r>0$. If furthermore $\omega$ is a linear cochain then we may define
\begin{equation*}
 \omega(T, r):=  \frac{1}{\haus^n(B(e,r))}\int_{B(e,r)} \omega(\varphi_{x\#}T)\,d\haus^n(x)
\end{equation*}
for $r>0$. We can estimate $\omega_+(T,r)$ and $|\omega(T,r)|$ as follows.

\begin{proposition}\label{proposition:avg-omega-estimate-curr}
 Let $G$ be a Lie group of dimension $n$,  endowed with a left-invariant Finsler metric. Let $0\leq m\leq n-1$ and $\mathcal{C} = (\mathcal{C}_m, \mathcal{C}_{m+1})$ with either $\mathcal{C}_k = \AKnc_k(G)$ or $\mathcal{C}_k=\AKic_k(G)$ for $k=m, m+1$.  Then the following properties hold:
 \begin{enumerate}
  \item[(i)] if $\omega\in\mathcal{W}_{q,p}(\mathcal{C}_m)$ for some $1\leq p,q<\infty$ and if $T\in\mathcal{C}_m$ then
  \begin{equation}\label{eq:omega-average-gen-flatnorm-norcurr}
  \omega_+(T,r) \leq \bar{\tau}(r)^m \flatnormFAT(T, \mathcal{C}) \left[\bar{\tau}(r)\haus^n(B(e,r))^{-\frac{1}{p}}\|g\|_p + \haus^n(B(e,r))^{-\frac{1}{q}}\|h\|_q\right]
 \end{equation}
 for all $r>0$, every upper norm $h$ and upper gradient $g$ of $\omega$ with respect to $\mathcal{C}_{m+1}$;
 \item[(ii)] if $\omega\in\mathcal{W}_{p}(\mathcal{C}_m^0)$ for some $1\leq p<\infty$ and if $T\in\mathcal{C}_m^0$ then
 \begin{equation}\label{eq:omega-average-gen-fillvol-norcurr}
  \omega_+(T,r) \leq \bar{\tau}(r)^{m+1} \haus^n(B(e,r))^{-\frac{1}{p}} \fillvol(T, \mathcal{C}_{m+1}) \|g\|_p
 \end{equation}
  for all $r>0$ and whenever $g$ is an upper gradient of $\omega$ with respect to $\mathcal{C}_{m+1}$;
  \item[(iii)] if $\mathcal{C}_k = \AKic_k(G)$ for $k=m, m+1$ and $T\in\mathcal{C}_m^0$ and if either $m=0$ or $G$ is a Carnot group of step $c$ or $\fillvol(T)\leq 1$ then $\|g\|_p$ in \eqref{eq:omega-average-gen-fillvol-norcurr} may be replaced by $\|g\|_{p, N(\spt T, r+\varrho(\fillvol(T)))}$ where
 \begin{equation*}
  \varrho(t) = \left\{\begin{array}{ll} Dt^{\frac{1}{m+1}} & 0<t\leq 1 \;\text{ or }\; m=0\\
   Dt^{\frac{c^{m+1}}{1+c +\dots + c^m}} & 1< t \;\text{ and }\; m\geq 1,
  \end{array}\right.
  \end{equation*}
with a constant $D$ depending only on $G$ and on the Finsler metric.
 \end{enumerate}
\end{proposition}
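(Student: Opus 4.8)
The plan is to derive all three estimates from a single mechanism: push $T$, together with an almost optimal flat-norm decomposition (for (i)) or filling (for (ii), (iii)), forward by the right translation $\varphi_x$; bound $|\omega(\varphi_{x\#}T)|$ pointwise in $x$ using the defining inequalities of an upper norm $h$ and an upper gradient $g$ of $\omega$; and integrate in $x$ over $B(e,r)$ via the change of variables estimate of Proposition~\ref{proposition:elem-upper-G}. Two features of $\varphi_x$ will be used throughout: it is $\bar{\tau}(|x|)$-Lipschitz, hence $\bar{\tau}(r)$-Lipschitz on $B(e,r)$, so that $\|\varphi_{x\#}S\|\leq\bar{\tau}(r)^k\varphi_{x\#}\|S\|$ for any $k$-current $S$ and $x\in B(e,r)$; and $\varphi_x$ displaces every point by exactly $|x|$, since $d(zx,z)=d(x,e)=|x|$ by left-invariance. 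Since $\varphi_x$ is a diffeomorphism it preserves each $\mathcal{C}_k$ and commutes with $\bdry$.

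For (i) I would fix $\varepsilon>0$, choose $R\in\mathcal{C}_m$ and $V\in\mathcal{C}_{m+1}$ with $T=R+\bdry V$ and $\mass(R)+\mass(V)\leq\flatnormFAT(T,\mathcal{C})+\varepsilon$, and use sublinearity on $\varphi_{x\#}T=\varphi_{x\#}R+\bdry(\varphi_{x\#}V)$ to get $|\omega(\varphi_{x\#}T)|\leq|\omega(\varphi_{x\#}R)|+|\omega(\bdry(\varphi_{x\#}V))|$. The upper norm inequality applied to $\varphi_{x\#}R\in\mathcal{C}_m$ bounds the first term by $\bar{\tau}(r)^m\int_G h(zx)\,d\|R\|(z)$ for $x\in B(e,r)$, and the upper gradient inequality applied to the pair $\varphi_{x\#}V\in\mathcal{C}_{m+1}$, $\bdry(\varphi_{x\#}V)\in\mathcal{C}_m$ bounds the second by $\bar{\tau}(r)^{m+1}\int_G g(zx)\,d\|V\|(z)$. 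Integrating over $B(e,r)$, dividing by $\haus^n(B(e,r))$, and applying Proposition~\ref{proposition:elem-upper-G} with $A=B(e,r)$ (the exponent there being $q$, resp.\ $p$) together with $\varrho(\mu,B(e,r)^{-1})\leq\mu(G)$ and $\|\cdot\|_{q,\Omega}\leq\|\cdot\|_q$ turns the two terms into $\bar{\tau}(r)^m\mass(R)\haus^n(B(e,r))^{-1/q}\|h\|_q$ and $\bar{\tau}(r)^{m+1}\mass(V)\haus^n(B(e,r))^{-1/p}\|g\|_p$; summing, bounding $\mass(R),\mass(V)$ by $\flatnormFAT(T,\mathcal{C})+\varepsilon$, and letting $\varepsilon\to0$ gives \eqref{eq:omega-average-gen-flatnorm-norcurr}.

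Part (ii) is the same argument with $R=0$: since $\bdry T=0$ one takes $V\in\mathcal{C}_{m+1}$ with $\bdry V=T$ and $\mass(V)\leq\fillvol(T,\mathcal{C}_{m+1})+\varepsilon$, so $\varphi_{x\#}T=\bdry(\varphi_{x\#}V)$, only the upper gradient term occurs, and \eqref{eq:omega-average-gen-fillvol-norcurr} follows after $\varepsilon\to0$. For (iii) I would repeat this choosing $V$ so that in addition $\spt V\subseteq N(\spt T,\varrho(\fillvol(T))+\varepsilon)$. Then for $x\in B(e,r)$ every point of $\spt(\varphi_{x\#}V)=\varphi_x(\spt V)$ lies within $|x|\leq r$ of $\spt V$, hence within $r+\varrho(\fillvol(T))+\varepsilon$ of $\spt T$, so the set $\Omega=(\spt\|V\|)\cdot B(e,r)$ in Proposition~\ref{proposition:elem-upper-G} is contained in $N(\spt T,r+\varrho(\fillvol(T))+\varepsilon)$ and $\|g\|_p$ is replaced by $\|g\|_{p,N(\spt T,r+\varrho(\fillvol(T))+\varepsilon)}$; letting $\varepsilon\to0$, with $\bigcap_{s'>s}N(\spt T,s')=N(\spt T,s)$ and dominated convergence (majorant $g^p$), yields the claim.

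Thus everything reduces to producing, for each $\varepsilon>0$, a filling $V$ of $T$ with $\mass(V)\leq\fillvol(T)+\varepsilon$ and $\spt V\subseteq N(\spt T,\varrho(\fillvol(T))+\varepsilon)$. For $m=0$ I would take an almost optimal filling, decompose it into Lipschitz curves $c_i$ by Lemma~\ref{lem:weak-structure-1-currents}, and replace it by the sum of the integral currents induced by the $c_i$: this has no larger mass, and since each $c_i$ starts in $\spt T$ with length at most the mass of the original filling (hence $\leq\fillvol(T)+\varepsilon$), it is supported in $N(\spt T,\fillvol(T)+\varepsilon)$, so $\varrho(t)=t$ works. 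For $\fillvol(T)\leq1$ I would take a mass-minimizing filling $V$ (or, to sidestep existence questions, an almost mass-minimizing one), slice it by $f(z)=\dist(z,\spt T)$, and exploit (near-)minimality — which gives $\mass(V\rstr\{f>\rho\})\leq\fillvol(\langle V,f,\rho\rangle)+\varepsilon$ for a.e.\ $\rho$ — together with the small-scale isoperimetric inequality $\fillvol(Z)\leq C\mass(Z)^{(m+1)/m}$ for $m$-cycles (valid since $G$ is locally biLipschitz to $\R^n$): with $\phi(\rho):=\mass(V\rstr\{f>\rho\})$, the coarea inequality and a routine ODE argument force $\phi$ to vanish (after a final capping-off) beyond $\rho=D\fillvol(T)^{1/(m+1)}+\varepsilon$. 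For a Carnot group of step $c$ one runs the same reduction through the dilations $\delta_t$, which form a controlled family of curves by the example following Definition~\ref{def:controlled-family-curves} and Lemma~\ref{lemma:Lip-distortions}; tracking how $(m+1)$-dimensional mass and the Finsler distance rescale under $\delta_t$ along the iteration is what produces the exponent $c^{m+1}/(1+c+\dots+c^m)$ in $\varrho$. I expect this last, Carnot-group, case to be the main obstacle: one must arrange the filling to be simultaneously essentially mass-minimal \emph{and} confined to a neighborhood of $\spt T$ of radius controlled purely by $\fillvol(T)$ rather than by $\diam\spt T$, which forces a careful iteration of the dilation--slicing scheme; by comparison the $m=0$ and $\fillvol(T)\leq1$ cases are routine.
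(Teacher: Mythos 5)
Your treatment of (i) and (ii) is essentially the paper's own: pick a near-optimal flat-norm decomposition (or filling), push forward by $\varphi_x$, note $\|\varphi_{x\#}S\|\le\bar\tau(r)^k\varphi_{x\#}\|S\|$, apply the upper-norm/upper-gradient inequalities, and integrate via Proposition~\ref{proposition:elem-upper-G}. Your reduction of (iii) to ``produce a filling $V$ with $\mass(V)\le(1+\varepsilon)\fillvol(T)$ and $\spt V\subset N(\spt T,\varrho(\fillvol(T)))$'' is also exactly the paper's reduction, and your $m=0$ construction via Lemma~\ref{lem:weak-structure-1-currents} is correct and matches the paper.

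The gap is in the other two cases of (iii). For $\fillvol(T)\le1$ your slicing/ODE sketch omits a step that the paper makes explicit: the small-mass Euclidean isoperimetric inequality in $G$ (from \cite{Wenger-flatconv}) applies only to cycles of mass $\le D_0$, and you would first need to show (by the coarea/slicing argument the paper carries out with $\lambda(x)=\dist(\spt T,x)$) that some slice $\langle S,\lambda,t\rangle$ with $t$ comparable to $\fillvol(T)$ already has mass $\le D_0$; a priori the slices of an almost-minimal filling need not be small. You also invoke a ``locally biLipschitz to $\R^n$'' isoperimetric inequality, which is a local statement, whereas what is needed is the genuinely global result of \cite{Wenger-flatconv} for small-mass cycles in $G$; after the slice one also needs the diameter-controlled filling of \cite[Lemma~5.3]{Wenger-flatconv}, not just an isoperimetric mass bound. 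For the Carnot case you acknowledge you have not produced the argument; the paper does not derive it from dilations either but instead quotes \cite[Proposition~4.3 and Corollary~7.3]{Wenger-asymptotic-rank}, which is precisely the statement that Carnot groups of step $c$ admit confined isoperimetric fillings with the exponent $c^{m+1}/(1+c+\dots+c^m)$. So the structure of your proposal is right, but (iii) as written is incomplete for $\fillvol(T)\le1$ and genuinely open in the Carnot case; both require the cited external results or a careful reproof thereof.
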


If $\omega$ is an addition a linear cochain then $\omega_+(T,r)$ can be replaced by $|\omega(T,r)|$ in statements (i), (ii), and (iii).

\begin{proof}
 Let $\omega$ be as in (i) and let $h$ be an upper norm and $g$ an upper gradient of $\omega$ with respect to $\mathcal{C}_{m+1}$.
 Let $U\in\mathcal{C}_m$ and $V\in\mathcal{C}_{m+1}$ be such that $T = U + \partial V$. Clearly, we have
 \begin{equation*}
  \|\varphi_{x\#} U\| \leq  \bar{\tau}(|x|)^m\varphi_{x\#}\|U\|
 \end{equation*}
 and
  \begin{equation*}
  \|\varphi_{x\#} V\| \leq  \bar{\tau}(|x|)^{m+1}\varphi_{x\#}\|V\|
 \end{equation*}
 for all $x\in G$. Together with Proposition~\ref{proposition:elem-upper-G} this yields
 \begin{equation*}
  \begin{split}
   \omega_+(T,r) & \leq \frac{1}{\haus^n(B(e,r))}\left(\int_{B(e,r)}|\omega(\varphi_{x\#}U)|\,d\haus^n(x) + \int_{B(e,r)}|\omega(\partial \varphi_{x\#}V)|\,d\haus^n(x)\right)\\
   &\leq \bar{\tau}(r)^m\frac{1}{\haus^n(B(e,r))} \int_{B(e,r)}\int_G h(zx)d\|U\|(z)d\haus^n(x)\\
   &\;\;\; + \bar{\tau}(r)^{m+1}\frac{1}{\haus^n(B(e,r))} \int_{B(e,r)}\int_G g(zx)d\|V\|(z)d\haus^n(x)\\
   &\leq \bar{\tau}(r)^m \left[\haus^n(B(e,r))^{-\frac{1}{q}} \|h\|_{q,\Omega_U} \mass(U) + \bar{\tau}(r) \haus^n(B(e,r))^{-\frac{1}{p}}\|g\|_{p,\Omega_V} \mass(V)\right]
  \end{split}
 \end{equation*}
 for every $r>0$, where $\Omega_U = (\spt U)\cdot B(e,r)$ and $\Omega_V = (\spt V)\cdot B(e,r)$. Taking the infimum over all $U$ and $V$ this yields \eqref{eq:omega-average-gen-flatnorm-norcurr} and proves (i). If $T\in\mathcal{C}_m^0$ then the above calculation with $U=0$ yields \eqref{eq:omega-average-gen-fillvol-norcurr} and thus (ii).
 
 We now prove statement (iii). In view of the inequality above, it is clearly enough to show that for every $\varepsilon>0$ there exists a filling $V\in\AKic_{m+1}(G)$ of $T$ satisfying $\mass(V)\leq (1+\varepsilon)\fillvol(T)$ and $$\spt V\subset N(\spt T, \varrho(\fillvol(T))).$$
 If $m=0$ then the existence of such $V$ follows from Lemma~\ref{lem:weak-structure-1-currents}. If $m\geq 1$ and $G$ is a Carnot group then the existence of such a $V$ is given by \cite[Proposition 4.3 and Corollary 7.3]{Wenger-asymptotic-rank}. Finally, suppose $m\geq 1$ and $\fillvol(T)\leq 1$. 
  By \cite{Wenger-flatconv}, there exists $D_0>0$ depending only on $G$ and the Finsler metric such that $G$ admits a Euclidean isoperimetric inequality for all cycles in $\AKic_m(G)$ of mass at most $D_0$. Let $\varepsilon\in(0,1)$ and let $S\in\AKic_{m+1}(G)$ be such that $\partial S = T$ and $\mass(S)\leq (1+\varepsilon)\fillvol(T)$. Define a $1$-Lipschitz function $\lambda(x):= \dist(\spt T, x)$. Set $\delta:=2D_0^{-1}\fillvol(T)$. By \cite[Theorems 5.6 and 5.7]{Ambrosio-Kirchheim-currents}, there exists $t\in(0, \delta)$ such that 
$$
\langle S, \lambda, t\rangle = \partial(S\rstr\{\lambda\leq t\}) - T
$$ 
is an integral current and has 
$$
\mass(\langle S, \lambda, t\rangle) \leq D_0.
$$
Indeed, otherwise we would have 
$$
\mass(S)\geq \|S\|(\{\lambda \leq \delta\})\geq \int_0^\delta \mass(\langle S, \lambda, t\rangle) dt > D_0\delta \geq 2\fillvol(T),
$$
 a contradiction. Set $T':=\langle S, \lambda, t\rangle$. Since $-\partial(S\rstr\{\lambda> t\}) = T'$ we clearly have $\fillvol(T')\leq \|S\|(\{\lambda>t\}).$
By \cite[Lemma 5.3 and its proof]{Wenger-flatconv} there exists a filling $S'\in\AKic_{m+1}(G)$ of $T'$ such that $$\mass(S')\leq (1+\varepsilon)\fillvol(T')$$
and
$$\spt S'\subset N(\spt T', D'\fillvol(T')^{\frac{1}{m+1}}),$$
where $D'$ only depends on $G$ and $d_0$.
It follows that $V:= S\rstr\{\lambda \leq t\} - S'$ is in $\AKic_{m+1}(G)$, has boundary $\partial V = T$, and satisfies $$\mass(V) \leq \|S\|(\{\lambda\leq t\}) + (1+\varepsilon)\fillvol(T') \leq (1+\varepsilon)\mass(S)\leq (1+\varepsilon)^2\fillvol(T)$$ and
\begin{equation}\label{eq:spt-V-small-fillvol}
\spt V\subset N(\spt T, D\fillvol(T)^{\frac{1}{m+1}}), 
\end{equation}
where $D = 4D_0^{-1} + 2D'$. Since $\varepsilon>0$ was arbitrary this completes the proof of the statement.
\end{proof}

Our next estimate is the following:

\begin{proposition}\label{proposition:tech-avg-diff-pt-omega-estimate}
 Let $G$ be a Lie group of dimension $n$, endowed with a left-invariant Finsler metric. Let $r_0>0$ and suppose $B(e,r_0)$ admits a $(D,s, \lambda, \eta)$-controlled family of curves. Let $0\leq m\leq n-1$ and $\mathcal{C} = (\mathcal{C}_m, \mathcal{C}_{m+1})$ with either $\mathcal{C}_k = \AKnc_k(G)$ or $\mathcal{C}_k=\AKic_k(G)$ for $k=m, m+1$. Let $T\in\mathcal{C}$ and suppose $T$ satisfies \eqref{eq:growth-mass-T} and \eqref{eq:growth-mass-bdry-T} for some $A,B>0$, $\alpha, \beta\in[0,s]$, $p> \max\{1, s-\alpha\}$, $q>\max\{1, s-\beta\}$, and all $r\in(0,2\lambda r_0)$. Then the following properties hold:
\begin{enumerate}
\item[(i)] if $\omega\in\mathcal{W}_{q,p}(\mathcal{C})$ then for every $r\in(0,r_0)$ and every upper norm $h$ and upper gradient $g$ of $\omega$ we have
\begin{equation*}
  \begin{split}
  |\,|\omega(T)| - \omega_+(T,r)\,| &\leq   D^2\eta\bar{\tau}(\lambda r)^{m-1}\left[ \bar{A} \bar{\tau}(\lambda r)r^{\frac{\alpha}{p}}\haus^n(B(e,r))^{-\frac{1}{p}}\mass(T)^{\frac{p-1}{p}} \|g\|_{p, \Omega}\right.\\
  &\quad\quad\quad\quad\quad\quad\quad\quad\quad + \left.\bar{B}r^{\frac{\beta}{q}} \haus^n(B(e,r))^{-\frac{1}{q}}\mass(\partial T)^{\frac{q-1}{q}} \|h\|_{q, \Omega}\right];
  \end{split}
 \end{equation*}
  \item[(ii)] if $\omega\in\mathcal{W}_{p}(\mathcal{C}^0)$ and $\fillvol(T, \mathcal{C}_{m+1})<\infty$ then for every $r\in(0,r_0)$ and every upper gradient $g$ of $\omega$ we have
  \begin{equation*}
   |\,|\omega(T)| - \omega_+(T,r)\,| \leq   D^2\eta\bar{\tau}(\lambda r)^m\left[ \bar{A} r^{\frac{\alpha}{p}}\haus^n(B(e,r))^{-\frac{1}{p}}\mass(T)^{\frac{p-1}{p}} \|g\|_{p, \Omega}\right].
 \end{equation*}
  \end{enumerate}
  In the inequalities, we have used
  \begin{equation*}
   \bar{A}:= \frac{(2\lambda)^{\frac{\alpha}{p}} A^{\frac{1}{p}}p(m+1)}{p-s+\alpha} \quad\text{ and }\quad \bar{B}:= \frac{(2\lambda)^{\frac{\beta}{q}} B^{\frac{1}{q}}qm}{q-s+\beta}
  \end{equation*}
  and $\Omega = (\spt T)\cdot B(e,\lambda r)$.
\end{proposition}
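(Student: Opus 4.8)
The plan is to estimate $|\,|\omega(T)| - \omega_+(T,r)\,|$ by bounding, for each translate $\varphi_{x\#}T$ with $x\in B(e,r)$, the difference $|\,|\omega(\varphi_{x\#}T)| - |\omega(T)|\,|$, and then averaging over $x$. By the triangle-type inequality for cochains, $|\,|\omega(\varphi_{x\#}T)| - |\omega(T)|\,|\le |\omega(\varphi_{x\#}T - T)|$ whenever at least one of the two terms is finite (the exceptional case where both are infinite is handled separately, as in the proof of Lemma~\ref{lemma:sobolev-omega-translates}), so it suffices to build a controlled filling of $\varphi_{x\#}T - T$ and invoke the upper gradient/upper norm property of $\omega$.

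First I would use the $(D,s,\lambda,\eta)$-controlled family of curves $H:[0,1]\times B(e,r_0)\to G$ to deform $T$ to $\varphi_{x\#}T$. Concretely, for $x\in B(e,r)$ define $\psi_x:[0,1]\times G\to G$ by $\psi_x(t,z):= z H_t(x)$ (so $\psi_x(0,z)=z$ since $H_0(x)=e$, and $\psi_x(1,z)=zx$), and set $S_x:= \psi_{x\#}([0,1]\times T)$ and, when $m\ge 1$, $R_x:= \psi_{x\#}([0,1]\times\partial T)$ (with $R_x=0$ if $m=0$). Then $\partial S_x + R_x = \varphi_{x\#}T - T$, so that $\varphi_{x\#}T - T = \partial S_x + R_x$ exhibits $\varphi_{x\#}T-T$ as a boundary plus a lower-dimensional term. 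Each $\psi_x(t,\cdot)$ is $\bar\tau(|H_t(x)|)$-Lipschitz and, since $t\mapsto H_t(x)$ is $\eta$-Lipschitz and $H_1(x)=x\in B(e,r)$, its image lies in $B(e,\lambda r)$ for $t\le 1$; hence $\bar\tau(|H_t(x)|)\le\bar\tau(\lambda r)$. Also $\psi_x(\cdot,z)$ is $\eta$-Lipschitz in $t$. Applying Lemma~\ref{lemma:mass-estimate-push-cone} with $\lambda(t)=\bar\tau(\lambda r)$ and $\delta(z)=\eta$ gives $\|S_x\|\le (m+1)\eta\bar\tau(\lambda r)^m\,\psi_{x\#}(\lm^1\times\|T\|)$ and $\|R_x\|\le m\eta\bar\tau(\lambda r)^{m-1}\,\psi_{x\#}(\lm^1\times\|\partial T\|)$.

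Next, using the upper gradient property of $\omega$ over the filling $S_x$ and the upper norm property over $R_x$,
\begin{equation*}
|\omega(\varphi_{x\#}T - T)| \le \int_G g\,d\|S_x\| + \int_G h\,d\|R_x\|,
\end{equation*}
and by the mass bounds above together with the change-of-variables description of $\psi_{x\#}(\lm^1\times\|T\|)$ the right side is at most
\begin{equation*}
(m+1)\eta\bar\tau(\lambda r)^m\!\int_0^1\!\!\int_G g(zH_t(x))\,d\|T\|(z)\,dt + m\eta\bar\tau(\lambda r)^{m-1}\!\int_0^1\!\!\int_G h(zH_t(x))\,d\|\partial T\|(z)\,dt.
\end{equation*}
Now I would average this over $x\in B(e,r)$, divide by $\haus^n(B(e,r))$, and apply Proposition~\ref{proposition:growth-integration-variant} twice: once with $\mu=\|T\|$, exponent $p$, and growth data $(A,\alpha)$ coming from \eqref{eq:growth-mass-T}, and once with $\mu=\|\partial T\|$, exponent $q$, and growth data $(B,\beta)$ from \eqref{eq:growth-mass-bdry-T}. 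This produces exactly the factors $\bar A r^{\alpha/p}\haus^n(B(e,r))^{(p-1)/p}\mass(T)^{\dots}\|g\|_{p,\Omega}$ and $\bar B r^{\beta/q}\haus^n(B(e,r))^{(q-1)/q}\mass(\partial T)^{\dots}\|h\|_{q,\Omega}$ with $\Omega=(\spt T)\cdot B(e,\lambda r)$; dividing by $\haus^n(B(e,r))$ converts the $(p-1)/p$ power into $-1/p$ and likewise for $q$, and tracking the constants $D$ (Jacobian bound, used when passing through the change of variables inside Proposition~\ref{proposition:growth-integration-variant}, contributing the $D^2$) gives the stated inequality (i). For (ii) one simply takes $T\in\mathcal C^0$ so $\partial T=0$, $R_x=0$, and only the first term survives, with $\bar\tau(\lambda r)^{m-1}$ replaced by $\bar\tau(\lambda r)^m$ because there is no extra $\bar\tau(\lambda r)$ gained from a boundary filling; here one needs $\fillvol(T,\mathcal C_{m+1})<\infty$ only to know (via Lemma~\ref{lemma:sobolev-omega-translates}(ii)) that $\omega_+(T,r)$ is finite and the exceptional ``both infinite'' case does not contribute.

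The main obstacle I anticipate is bookkeeping rather than conceptual: one must be careful that the Lipschitz constants entering Lemma~\ref{lemma:mass-estimate-push-cone} are $\bar\tau(\lambda r)$ and $\eta$ (not $\bar\tau(r)$), justify that $\psi_x(t,\cdot)$ has image in $B(e,\lambda r)$ uniformly in $t\in[0,1]$ and $x\in B(e,r)$ so that $\Omega$ can be taken independent of $x$ and $t$, and verify that Proposition~\ref{proposition:growth-integration-variant} is applicable, i.e. that its hypothesis $\alpha,\beta\in[0,s]$ and the growth bounds hold on the full range $(0,2\lambda r_0)$ — which is exactly what the statement assumes. A secondary subtlety is the handling of infinite values: one should argue, exactly as in Lemma~\ref{lemma:sobolev-omega-translates}, that if $|\omega(T)|=\infty$ then $\omega_+(T,r)=\infty$ as well (using that $u(x)=|\omega(\varphi_{x\#}T)|$ is finite a.e.\ when $\omega\in\mathcal W_{q,p}$, resp.\ $\mathcal W_p(\mathcal C^0)$ with finite filling volume), so the inequality is vacuously true, and otherwise the cochain triangle inequality $|\,|\omega(T)|-|\omega(\varphi_{x\#}T)|\,|\le|\omega(\varphi_{x\#}T-T)|$ is legitimate.
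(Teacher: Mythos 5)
Your proposal follows essentially the same path as the paper: build the deformation current $S_x=\psi_{x\#}([0,1]\times T)$ and (when $m\geq 1$) $R_x=\psi_{x\#}([0,1]\times\partial T)$ from the controlled family of curves, observe $\partial S_x=\varphi_{x\#}T-T-R_x$, apply sublinearity plus the upper gradient/upper norm to bound $|\,|\omega(T)|-|\omega(\varphi_{x\#}T)|\,|$, estimate $\|S_x\|$ and $\|R_x\|$ via Lemma~\ref{lemma:mass-estimate-push-cone} with Lipschitz data $\bar\tau(\lambda r)$ and $\eta$, average over $x\in B(e,r)$, and invoke Proposition~\ref{proposition:growth-integration-variant} twice. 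This matches the paper's argument, including the handling of the a.e.\ finiteness of $|\omega(\varphi_{x\#}T)|$ via Lemma~\ref{lemma:sobolev-omega-translates} and the role of $\fillvol(T,\mathcal{C}_{m+1})<\infty$ in part (ii).
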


If $\omega$ is in addition linear then the inequalities in the proposition hold with $|\,|\omega(T)| - \omega_+(T,r)|$ replaced by $|\omega(T) - \omega(T,r)|$.

\begin{proof}
 We only prove (i) since the proof of (ii) is analogous. Let $H:[0,1]\times B(e, r_0)\to G$ be the Lipschitz map defining the controlled family of curves. Let $r\in(0,r_0)$. For $x\in B(e,r)$ define $\psi_x: [0,1]\times G\to G$ by $\psi_x(t,z):= zH_t(x)$ and note that $\psi_x(t,\cdot)$ is $\bar{\tau}(\lambda r)$-Lipschitz for every $t\in[0,1]$ and $\psi_x(\cdot, z)$ is $\eta$-Lipschitz for every $z\in G$. Define $S_x:= \psi_{x\#}([0,1]\times T)$. If $m\geq 1$ define $R_x:= \psi_{x\#}([0,1]\times\partial T)$; if $m=0$ then set $R_x:=0$. Note that $S_x\in\AKnc_{m+1}(G)$ and $R_x\in\AKnc_m(G)$ if $T\in\AKnc_m(G)$ and $S_x\in\AKic_{m+1}(G)$ and $R_x\in\AKic_m(G)$ if $T\in\AKic_m(G)$. Since  $\partial S_x = \varphi_{x\#}T - T - R_x$ and since, by Lemma~\ref{lemma:sobolev-omega-translates}, we have $|\omega(\varphi_{x\#}T)|<\infty$ for almost every $x\in B(e,r)$, we obtain
 \begin{equation*}
   |\,|\omega(T)| - |\omega(\varphi_{x\#}T)|\,| \leq |\omega(\partial S_x)| + |\omega(R_x)| \leq \int_G g(z)d\|S_x\|(z) + \int_Gh(z)d\|R_x\|(z)
 \end{equation*}
 for almost every $x\in B(e,r)$ and hence
 \begin{equation*}
  \begin{split}
   |\,|\omega(T)| - \omega_+(T,r)\,| &\leq \frac{1}{\haus^n(B(e,r))}\int_{B(e,r)} |\omega(T) - |\omega(\varphi_{x\#}T)||\,d\haus^n(x)\\
    &\leq \frac{1}{\haus^n(B(e,r))} \int_{B(e,r)} \int_G g(z)d\|S_x\|(z)\,d\haus^n(x) \\
    & \; + \frac{1}{\haus^n(B(e,r))}  \int_{B(e,r)} \int_G h(z)d\|R_x\|(z)\,d\haus^n(x).
  \end{split}
 \end{equation*}
 By Lemma~\ref{lemma:mass-estimate-push-cone}, we obtain
 \begin{equation*}
  \|S_x\| \leq (m+1) \eta \bar{\tau}(\lambda r)^m\,\psi_{x\#}(\lm^1\times \|T\|)
 \end{equation*}
 as well as 
 \begin{equation*}
   \|R_x\| \leq  m\eta \bar{\tau}(\lambda r)^{m-1}\,\psi_{x\#}(\lm^1\times \|\partial T\|)
 \end{equation*}
 for all $x\in B(e,r)$. Proposition~\ref{proposition:growth-integration-variant} yields 
 \begin{equation*}
  \begin{split}
  \int_{B(e,r)} \int_G g(z)d\|S_x\|&(z)\,d\haus^n(x) \\
   &\leq  (m+1)\eta \bar{\tau}(\lambda r)^m  \int_{B(e,r)} \int_0^1\int_G g(zH_t(x))d\|T\|(z)dt\,d\haus^n(x)\\
   &\leq  (m+1)D^2\eta \bar{\tau}(\lambda r)^m\frac{(2\lambda)^{\frac{\alpha}{p}} A^{\frac{1}{p}}p}{p-s+\alpha}r^{\frac{\alpha}{p}} \left[\haus^n(B(e,r))\mass(T)\right]^{\frac{p-1}{p}} \|g\|_{p, \Omega},
  \end{split}
 \end{equation*}
 where $\Omega = (\spt T) \cdot B(e,\lambda r)$. Similarly, we obtain
  \begin{equation*}
  \begin{split}
  \int_{B(e,r)} \int_G h(z)d\|R_x\|&(z)\,d\haus^n(x) \\
  &\leq  m\eta \bar{\tau}(\lambda r)^{m-1}  \int_{B(e,r)} \int_0^1\int_G h(zH_t(x))d\|\partial T\|(z)dt\,d\haus^n(x)\\
   &\leq mD^2\eta \bar{\tau}(\lambda r)^{m-1}\frac{(2\lambda)^{\frac{\beta}{q}} B^{\frac{1}{q}}q}{q-s+\beta}r^{\frac{\beta}{q}} \left[\haus^n(B(e,r))\mass(\partial T)\right]^{\frac{q-1}{q}} \|h\|_{q, \Omega}.
  \end{split}
 \end{equation*}
 Combining the above estimates gives the claim.
\end{proof}

We are finally ready to prove Theorems~\ref{hoeldercont2} and \ref{hoeldercont1}. We first give the proof of the latter theorem.

\begin{proof}[Proof of Theorem~\ref{hoeldercont1}]
 There exist $D\geq 1$ and $r_0>0$ such that $B(e,r)$ admits a $(D, n, D, Dr)$-controlled family of curves for every $0<r\leq r_0$ and such that
 $$\haus^n(B(e,r))\geq D^{-1} r^n$$ for all $0\leq r\leq r_0$. We may of course assume that $r_0\leq 1$. Note that there exists $D'$ such that $\bar{\tau}(Dr)\leq D'$ for all $0\leq r\leq r_0$. 
It now follows from Propositions~\ref{proposition:avg-omega-estimate-curr} and \ref{proposition:tech-avg-diff-pt-omega-estimate} that for every upper norm $h$ of $\omega$ and every upper gradient $g$ of $\omega$ with respect to $\mathcal{C}_{m+1}$ we have
\begin{equation*}
 \begin{split}
 |\omega(T)| &\leq |\,|\omega(T)|-\omega_+(T,r)\,|+ \omega_+(T,r) \\
 &\leq E' \left[\frac{A^{1/p}p}{p-n+\alpha} \mass(T)^{(p-1)/p}r^{1+\frac{\alpha}{p}} + \flatnormFAT(T, \mathcal{C})\right] r^{-\frac{n}{p}}\norm{g}_{p}\\
&\quad+ E'\left[\frac{B^{1/q}q}{q-n+\beta}\mass(\partial T)^{(q-1)/q} r^{1+\frac{\beta}{q}}+ \flatnormFAT(T, \mathcal{C})\right] r^{-\frac{n}{q}}\norm{h}_{q}\\
&\leq E' \left[\frac{A^{1/p}}{1-\gamma} (1 + \nmass(T))^{(p-1)/p}r^{1+\frac{\alpha-n}{p}} + \flatnormFAT(T, \mathcal{C})r^{-\frac{n}{p}}\right] \norm{g}_{p}\\
&\quad+ E'\left[\frac{B^{1/q}}{1-\gamma}(1+\nmass(T))^{(q-1)/q} r^{1+\frac{\beta-n}{q}}+ \flatnormFAT(T, \mathcal{C})r^{-\frac{n}{q}}\right] \norm{h}_{q}\\
\end{split}
\end{equation*}
for $0<r\leq r_0$, where $E'$ is a constant only depending on $D$, $D'$, and $m$, and where $\nmass(T) = \mass(T) + \mass(\bdry T)$. 
Set $$r:= \left(\frac{\flatnormFAT(T,\mathcal{C})}{1+\nmass(T)}\right)^{\frac{1}{1+\delta}}r_0$$ and note that $r\leq r_0$ since $\flatnormFAT(T, \mathcal{C})\leq \mass(T)$. With this choice of $r$ the inequality above easily yields
\begin{equation*}
 \begin{split}
  |\omega(T)|&\leq E\left[\frac{A^{1/p}}{1-\gamma} (1 + \nmass(T))^{1-\frac{1}{p}- \frac{1+\frac{\alpha-n}{p}}{1+\delta}}\flatnormFAT(T,\mathcal{C})^{\frac{1+\frac{\alpha-n}{p}}{1+\delta}}\right] \norm{g}_p\\
  &\quad +E\left[ (1 + \nmass(T))^{\frac{n}{p(1+\delta)}}\flatnormFAT(T, \mathcal{C})^{1- \frac{n}{p(1+\delta)}}\right] \norm{g}_{p}\\
&\quad + E \left[\frac{B^{1/q}}{1-\gamma}(1+\nmass(T))^{1-\frac{1}{q}- \frac{1+\frac{\beta-n}{q}}{1+\delta}}\flatnormFAT(T,\mathcal{C})^{\frac{1+\frac{\beta-n}{q}}{1+\delta}}\right]\norm{h}_q\\
&\quad + E \left[(1+\nmass(T))^{\frac{n}{q(1+\delta)}}\ \flatnormFAT(T, \mathcal{C})^{1-\frac{n}{q(1+\delta)}}\right]\norm{h}_{q}\\
 \end{split}
\end{equation*}
for some constant $E$ depending only on $E'$ and $r_0$. Since the exponents of $\flatnormFAT(T,\mathcal{C})$ are all between $\frac{1-\gamma}{1+\delta}$ and $\frac{1+\delta-\lambda}{1+\delta}$ and the exponents of $(1+\nmass(T))$ are bounded above by $\frac{\gamma+\delta}{1+\delta}$ we obtain from the above inequality that
\begin{equation*}
 |\omega(T)|\leq E \Lambda(T)\, \Big(\flatnormFAT(T, \mathcal{C})^{1-\frac{\lambda}{1+\delta}} + \flatnormFAT(T, \mathcal{C})^{1 - \frac{\gamma+\delta}{1+\delta}}  \Big)\,(\norm{g}_p + \norm{h}_q).
\end{equation*}
Since $h$ and $g$ were arbitrary the proof is complete.
\end{proof}

The proof of Theorem~\ref{hoeldercont2} is similar to the proof above but moreover uses the Euclidean isoperimetric inequality.

\begin{proof}[Proof of Theorem~\ref{hoeldercont2}]
There exist $D\geq 1$ and $r_0>0$ such that $B(e,r)$ admits a $(D, n, D, Dr)$-controlled family of curves for every $0<r\leq r_0$ and such that
 $$\haus^n(B(e,r))\geq D^{-1} r^n$$ for all $0\leq r\leq r_0$. We may of course assume that $r_0\leq 1$. Note that there exists $D'$ such that $\bar{\tau}(Dr)\leq D'$ for all $0\leq r\leq r_0$. Note furthermore that if $G$ is a normed space then the above holds with $r_0=\infty$.
Now, Propositions~\ref{proposition:avg-omega-estimate-curr} and \ref{proposition:tech-avg-diff-pt-omega-estimate} yield
\begin{equation}\label{eq:omega-T-est-initial-theorem-proof-hoelder2}
 \begin{split}
 |\omega(T)| &\leq |\,|\omega(T)|-\omega_+(T,r)\,|+ \omega_+(T,r) \\
 &\leq F r^{-\frac{n}{p}} \left[\frac{A^{\frac{1}{p}}p}{p-n+\alpha} \mass(T)^{\frac{p-1}{p}} r^{1+\frac{\alpha}{p}}  + \fillvol(T)\right]\|g\|_{p, N(\spt T, F \fillvol(T)^{\frac{1}{m+1}} + Dr)}
\end{split}
\end{equation}
for $0<r\leq r_0$, where $F$ is a constant only depending on $G$ and the left-invariant Finsler metric $d_0$ on $G$. Of course, we may assume that $T\not=0$. Now, suppose first that $G$ is a normed space. Setting $$r:= \left[\fillvol(T)^pA^{-1}\mass(T)^{1-p}\right]^{\frac{1}{p+\alpha}},$$ the above inequality becomes
\begin{equation}\label{eq:omega-T-fillvol-estimate-proof-theorem}
|\omega(T)|\leq F\left(1 + \frac{p}{p+\alpha - n}\right)\fillvol(T)^{1-n/(p+\alpha)}A^{\eta}\mass(T)^{(p-1)\eta} \norm{g}_{p, N(\spt T, t)}, 
\end{equation}
where $\eta = n/(p(p+\alpha))$ and $$t = F \cdot \fillvol(T)^{\frac{1}{m+1}} + Dr.$$ If $m=0$ then we clearly have $t\leq s_0$, where $s_0$ is as in the statement of the theorem. If $m\geq 1$ then it follows from the Euclidean isoperimetric inequality that $t\leq s_0$. This proves the theorem for the case that $G$ is a normed space.

Next, suppose that $G$ is arbitrary and $\fillvol(T)\leq 1$. If $m=0$ then set $D'':= r_0$. If $m\geq 1$ then define $D''$ as follows. By \cite{Wenger-flatconv}, there exists $0<D_0\leq 1$ such that $G$ admits a Euclidean isoperimetric inequality for cycles in $\AKic_m(G)$ of mass at most $D_0$. Denote by $\bar{D}$ the isoperimetric constant. We may assume that $\bar{D}\geq 1$. Set $D'':= \min\{\bar{D}^{-1}, D_0\} r_0$. Finally, define
\begin{equation}\label{eq:def-r-proof-hoeldercont2}
 r:= D''\left[\fillvol(T)^pA^{-1}\mass(T)^{1-p}\right]^{\frac{1}{p+\alpha}}.
\end{equation}
We claim that $r\leq r_0$. Indeed, if $m=0$ then $\mass(T)\geq 2$ and thus we clearly have $r\leq r_0$. If $m\geq 1$ and $\mass(T)>D_0$ then $$\left[\fillvol(T)^pA^{-1}\mass(T)^{1-p}\right]^{\frac{1}{p+\alpha}} \leq \left(\frac{1}{D_0}\right)^{\frac{p-1}{p+\alpha}} \leq D_0^{-1}$$ and hence $r\leq r_0$, as claimed. If $m\geq 1$ and $\mass(T)\leq D_0$ then, by the Euclidean isoperimetric inequality,  $$\left[\fillvol(T)^pA^{-1}\mass(T)^{1-p}\right]^{\frac{1}{p+\alpha}} \leq \left[\bar{D}^p A^{-1}\mass(T)^{1+\frac{p}{m}}\right]^{\frac{1}{p+\alpha}}\leq \bar{D}^{\frac{p}{p+\alpha}} \leq \bar{D}$$ and thus $r\leq r_0$, as claimed. With $r$ as in \eqref{eq:def-r-proof-hoeldercont2} it is not difficult to see that \eqref{eq:omega-T-est-initial-theorem-proof-hoelder2} becomes
\begin{equation*}
|\omega(T)|\leq E\left(1 + \frac{p}{p+\alpha - n}\right)\fillvol(T)^{1-n/(p+\alpha)}A^{\eta}\mass(T)^{(p-1)\eta} \norm{g}_{p, N(\spt T, t)}, 
\end{equation*}
 with a constant $E$ depending only on $G$ and $d_0$, and where again $$t = F \cdot \fillvol(T)^{\frac{1}{m+1}} + Dr.$$ If $m=0$ then clearly $t\leq s_0$. If $m\geq 1$ and $\mass(T)>D_0$ then a straightforward calculation shows that $t\leq s_0$. Finally, if $m\geq 1$ and $\mass(T)\leq D_0$ then the Euclidean isoperimetric inequality for cycles in $\AKic_m(G)$ of mass at most $D_0$ also gives that $t\leq s_0$. This completes the proof of the theorem.
 \end{proof}


\subsection{Families with positive modulus or capacity} \label{section:lie-haus-cap}
In Section \ref{section:rel-haus-cap}, we showed that the capacity of a set of currents vanishes if the currents are supported in a small enough set. The assumptions on the underlying metric space were mild. On the other hand, lower bounds or even positivity of capacities do not hold in general unless the underlying metric space has some structure. In this section we show that in the case of Lie groups, a single current with suitable local mass growth has non-zero $p$-capacity for large enough $p$. This property is closely connected to continuity, and has already implicitly appeared in the proofs of our main results above. We also give an example illustrating the sharp exponent $p$ for which this property holds.

We begin with the following elementary observation.

\begin{proposition}
\label{elementary}
 Let $G$ be a Lie group  of dimension $n$, endowed with a left-invariant Finsler metric, and let $T\in\AKc_m(G)$ with $T\not=0$ and $0\leq m\leq n$. Let $B\subset G$ be a Borel set with $\haus^n(B)>0$. Then the set $\Gamma:=\{ \varphi_{x\#}T: x\in B\}$ has
$M_q(\Gamma)>0$  for every $q \geq 1$.
\end{proposition}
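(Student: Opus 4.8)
The plan is to produce an explicit lower bound for $M_q(\Gamma)$ by exhibiting a ``test current'' $S \in \Gamma$ against which any admissible function must have large integral, and then integrating this over a suitable portion of $B$. First I would fix a tuple $(f,\pi_1,\dots,\pi_m) \in \Lipspace_b(G)\times\Lipspace^m(G)$ with $T(f,\pi) \ne 0$; this exists since $T \ne 0$. Rescaling, we may assume each $\pi_i$ is $1$-Lipschitz and $|f| \le 1$, so that for any $x \in G$ the inequality from Proposition~\ref{prop:upper-grad-norm-Lipform-cochain} gives
$$
|\varphi_{x\#}T(f,\pi)| \le \int_G |f|\prod_{i=1}^m \ptLip \pi_i \, d\|\varphi_{x\#}T\| \le \|\varphi_{x\#}T\|(G).
$$
Here the key point is to relate $T(f,\pi)$ and $\varphi_{x\#}T(f,\pi)$: by definition of push-forward, $\varphi_{x\#}T(f,\pi) = T(f\circ\varphi_x, \pi\circ\varphi_x)$, and since $\varphi_x$ is $\bar\tau_G(|x|)$-Lipschitz (as recalled at the start of Section~\ref{section:tech-est-cochains}), for $x$ ranging over a bounded set $B(e,\rho)$ the functions $\pi_i\circ\varphi_x$ are uniformly Lipschitz. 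I expect the main technical obstacle to be the continuity/uniformity in $x$: one needs that $x \mapsto T(f\circ\varphi_x,\pi\circ\varphi_x)$ is continuous (so that $|\varphi_{x\#}T(f,\pi)|$ stays bounded away from $0$ on a set of positive $\haus^n$-measure), which follows from property (i) in Definition~\ref{def:current} together with the smoothness of multiplication in $G$, but must be argued with some care.

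Next I would argue as follows. After replacing $B$ by a subset of positive measure and finite diameter, we may assume $B \subset B(e,\rho)$. By the continuity just discussed, there is a constant $\sigma>0$ and a Borel set $B' \subset B$ with $\haus^n(B')>0$ such that $|\varphi_{x\#}T(f,\pi)| \ge \sigma$ for all $x \in B'$. Let $L := \sup_{x\in B(e,\rho)} \prod_i \Lipconst(\pi_i\circ\varphi_x) < \infty$, which is finite because $\varphi_x$ is $\bar\tau_G(\rho)$-Lipschitz for all $x\in B(e,\rho)$. Then for every $x \in B'$,
$$
\sigma \le |\varphi_{x\#}T(f,\pi)| \le L \int_G |f\circ\varphi_x| \, d\|\varphi_{x\#}T\| \le L\, \|\varphi_{x\#}T\|(G).
$$
Now let $u \ge 0$ be any Borel function admissible for $\Gamma$, i.e.\ $\int_G u \, d\|\varphi_{x\#}T\| \ge 1$ for all $x \in B'$. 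The strengthened mass estimate \eqref{eq:strong-mass-def-estimate-ptLip} (applied to $\varphi_{x\#}T$, whose mass measure we can compute via push-forward) shows $\|\varphi_{x\#}T\| \le \bar\tau_G(\rho)^m \, \varphi_{x\#}\|T\|$, so
$$
1 \le \int_G u \, d\|\varphi_{x\#}T\| \le \bar\tau_G(\rho)^m \int_G u(zx)\, d\|T\|(z)
$$
for every $x \in B'$. Integrating this over $B'$ and invoking Proposition~\ref{proposition:elem-upper-G} with $\mu = \|T\|$, $f = u$, $A = B'$, $p=q$:
$$
\haus^n(B') \le \bar\tau_G(\rho)^m \int_{B'}\int_G u(zx)\, d\|T\|(z)\, d\haus^n(x) \le \bar\tau_G(\rho)^m \|u\|_{q}\big[\haus^n(B')\mass(T)\big]^{\frac{q-1}{q}} \varrho(\|T\|,(B')^{-1})^{\frac1q}.
$$
Rearranging yields $\|u\|_q^q \ge c$ for a constant $c>0$ depending only on $\haus^n(B')$, $\mass(T)$, $\bar\tau_G(\rho)$, and $\varrho(\|T\|,(B')^{-1})$, but not on $u$. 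Taking the infimum over admissible $u$ gives $M_q(\Gamma) \ge c > 0$, as desired.

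A remark on where the argument could be streamlined: one does not actually need the full strength of $|\varphi_{x\#}T(f,\pi)|\ge\sigma$ together with the upper bound; it suffices to know $\|\varphi_{x\#}T\|(G) > 0$ for $x$ in a set of positive measure, which is automatic since $\varphi_x$ is a bi-Lipschitz homeomorphism of $G$ and hence $\mass(\varphi_{x\#}T) \ge \bar\tau_G(|x|^{-1})^{-m}\mass(T) > 0$ for every $x$. Thus the real content is purely the change-of-variables estimate of Proposition~\ref{proposition:elem-upper-G}, and the delicate point about continuity of $x\mapsto\varphi_{x\#}T(f,\pi)$ can in fact be bypassed entirely by working directly with the mass measures. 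I would present the clean version: fix $\rho$ with $B\subset B(e,\rho)$, note $\|\varphi_{x\#}T\|\le \bar\tau_G(\rho)^m\varphi_{x\#}\|T\|$ and $\mass(\varphi_{x\#}T)\ge \bar\tau_G(\rho)^{-m}\mass(T)>0$ for all $x\in B$, and run the displayed chain of inequalities above with $B$ in place of $B'$. The main obstacle is therefore merely bookkeeping the Lipschitz distortion constants $\bar\tau_G(\rho)$ of right translations on a bounded set; everything else is a direct application of Proposition~\ref{proposition:elem-upper-G}.
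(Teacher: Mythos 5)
Your final ``clean version'' is correct and follows essentially the same route as the paper: bound $\|\varphi_{x\#}T\|\le\bar\tau_G(\rho)^m\varphi_{x\#}\|T\|$ for $x\in B\subset B(e,\rho)$, then apply the change-of-variables estimate (Lemma~\ref{lemma:fubini-G}, here invoked through Proposition~\ref{proposition:elem-upper-G}). The paper phrases this as a contradiction --- assuming $M_q(\Gamma)=0$, producing $f\in L^q(G)$ with $\int_G f\,d\|\varphi_{x\#}T\|=\infty$ for every $x\in B$, and then deriving $\|f\|_q=\infty$ --- whereas you give a direct lower bound $\|u\|_q\ge c>0$ for every admissible $u$; these are the same estimate packaged differently, and neither version requires the lower mass bound $\mass(\varphi_{x\#}T)\ge\bar\tau_G(\rho)^{-m}\mass(T)$ that you mention. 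Be aware, though, that your initial sketch does contain a genuine gap: for a fixed tuple $(f,\pi)$ with $T(f,\pi)\ne 0$, continuity of $x\mapsto\varphi_{x\#}T(f,\pi)$ only guarantees that $|\varphi_{x\#}T(f,\pi)|\ge\sigma$ on a neighborhood of $e$, and an arbitrary positive-measure Borel set $B$ need not meet that neighborhood at all; you were right to identify this as delicate and to bypass it in the final version.
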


\begin{proof}
 We may assume without loss of generality that $B$ is contained in a ball $B(e,R)$.
 We argue by contradiction and suppose that $M_q(\Gamma)=0$  for some $q \geq 1$. There then exists $f\in L^q(G)$ with $f\geq 0$ and such that $$\int_G f(z)d\|\varphi_{x\#}T\|(z)=\infty$$ for every $x\in B$. Since $$\|\varphi_{x\#}T\|\leq \bar{\tau}(|x|)^m\varphi_{x\#}\|T\| \leq \bar{\tau}(R)^m\varphi_{x\#}\|T\|$$ for every $x\in B$, H\"older's inequality and Lemma~\ref{lemma:fubini-G} imply
 \begin{equation*}
  \begin{split}
  \infty &= \int_B\int_G f^q(z)d\|\varphi_{x\#}T\|(z)d\haus^n(x)\\
  & \leq \bar{\tau}(R)^m \int_G f^q(x)\|T\|(xB^{-1})d\haus^n(x)\\
  & \leq  \bar{\tau}(R)^m \mass(T) \|f\|_q^q,
  \end{split}
 \end{equation*}
 contradicting the fact that $f\in L^q(G)$.
 \end{proof}

\begin{proposition}
\label{caplowerbound}
 Let $G$ be a Lie group of dimension $n$, endowed with a left-invariant Finsler metric. Let $0\leq m\leq n-1$ and $T\in\AKic_m^0(G)$ with $T\not=0$ and $\fillvol(T)<\infty$. Suppose there exist $A, r_1>0$ and $\alpha\in[0,n]$ such that $T$ satisfies \eqref{eq:growth-mass-T} for every $r\in(0,r_1)$. Then we have 
 $$\capa_p(\{T\}, \AKic_{m+1}(G))>0$$
 for every $p>n-\alpha$. If $T\in\AKnc_m^0(G)$ and if $T$ satisfies the same conditions as above then 
 $$\capa_p(\{T\}, \AKnc_{m+1}(G))>0$$
 for every $p>n-\alpha$. 
\end{proposition}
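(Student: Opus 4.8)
The plan is to dualize the definition of capacity: from an arbitrary function admissible for the relevant family I will build a cochain in $\mathcal{W}_p$ whose value at $T$ is at least $1$, and then bound that value by a fixed multiple of the $L^p$-norm of the admissible function, using the fixed-radius versions of the averaging estimates from Section~\ref{section:tech-est-cochains}. This forces the $L^p$-norm, hence the capacity, to be bounded below.

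First I would note that since $\fillvol(T)<\infty$ the family $\Gamma:=\{S\in\AKic_{m+1}(G):\bdry S=T\}$ is non-empty, so that $\capa_p(\{T\},\AKic_{m+1}(G))=M_p(\Gamma)$. Let $f:G\to[0,\infty]$ be Borel and admissible for $\Gamma$, i.e.\ $\int_G f\,d\|S\|\geq 1$ for every $S\in\Gamma$. If $\|f\|_p=\infty$ there is nothing to prove, so assume $f\in L^p(G,\haus^n)$. Define, for $T'\in\AKic_m^0(G)$,
\begin{equation*}
\omega(T'):=\inf\left\{\int_G f\,d\|S\|:\ S\in\AKic_{m+1}(G),\ \bdry S=T'\right\},
\end{equation*}
with $\inf\emptyset=\infty$. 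Subadditivity of mass and of the integral under addition of currents shows at once that $\omega$ is a cochain on $\AKic_m^0(G)$ and that $f$ is an upper gradient of $\omega$ with respect to $\AKic_{m+1}(G)$; thus $\omega\in\mathcal{W}_p(\AKic_m^0(G))$, and moreover $\omega(T)\geq 1$ because $f$ is admissible and $\Gamma\neq\emptyset$.

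Next I would produce a finite constant $C$, depending only on $T$, $G$, $n$, $p$, $\alpha$, and $A$, with $|\omega(T)|\leq C\,\|f\|_p$. Choose $r_0>0$ small enough that $B(e,r_0)$ admits a $(D,n,D,Dr_0)$-controlled family of curves (example~(1) after Definition~\ref{def:controlled-family-curves}), that $\haus^n(B(e,r))\geq D^{-1}r^n$ for $0\leq r\leq r_0$, and that $2Dr_0<r_1$ so that \eqref{eq:growth-mass-T} holds on the required range. Applying Proposition~\ref{proposition:avg-omega-estimate-curr}(ii) with radius $r=r_0$ gives $\omega_+(T,r_0)\leq C_1\,\fillvol(T)\,\|f\|_p$; applying Proposition~\ref{proposition:tech-avg-diff-pt-omega-estimate}(ii) with $r=r_0$ gives $|\,|\omega(T)|-\omega_+(T,r_0)\,|\leq C_2\,\mass(T)^{(p-1)/p}\,\|f\|_p$, where $C_1$ and $C_2$ are finite (this is exactly where $p>n-\alpha$ enters, keeping the constant $\bar A=\tfrac{(2\lambda)^{\alpha/p}A^{1/p}p(m+1)}{p-n+\alpha}$ finite). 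Adding these, $|\omega(T)|\leq C\|f\|_p$ with $C:=C_1\fillvol(T)+C_2\mass(T)^{(p-1)/p}<\infty$. Combining with $\omega(T)\geq 1$ yields $\int_G f^p\,d\haus^n\geq C^{-p}$ for every admissible $f$, whence $M_p(\Gamma)\geq C^{-p}>0$.

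For the statement about normal currents one repeats the argument verbatim with $\AKic$ replaced by $\AKnc$ throughout, using $\fillvol(T,\AKnc_{m+1}(G))<\infty$; the estimates in Propositions~\ref{proposition:avg-omega-estimate-curr} and \ref{proposition:tech-avg-diff-pt-omega-estimate} are already stated for both choices of $\mathcal{C}$, and \eqref{hgflatnorm}-type cochains exist in the normal category as well. I do not expect a serious obstacle here; the only real subtlety is recognizing that evaluating the averaging estimates at a single \emph{fixed} radius $r_0$ (rather than the optimized radius of Theorem~\ref{hoeldercont2}) already suffices for positivity of the capacity, which is precisely what lets us dispense with the hypothesis $\fillvol(T)\leq 1$ that Theorem~\ref{hoeldercont2} imposes. (When $n-\alpha<1$ the borderline value $p=1$ also lies in the asserted range and needs $p>1$ for the cone estimate; this case can only occur for rather degenerate growth exponents $\alpha>n-1$ and is handled by running the argument with an auxiliary $p'\in(1,\infty)$, $p'>n-\alpha$, on a fixed bounded neighbourhood of $\spt T$, in the spirit of the remark following Proposition~\ref{prop:intersect-path-modulus-hausdorff}.)
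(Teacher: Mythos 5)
Your proof is correct, and it reaches the same lower bound $\|f\|_p\geq C^{-1}$ by the same underlying geometry (translate a filling $V$, add a cone over $T$, average over $B(e,r_0)$ using Propositions~\ref{proposition:elem-upper-G} and \ref{proposition:growth-integration-variant}), but it packages the argument differently from the paper. The paper works directly with the admissible function: it fixes a filling $V$ of $T$, builds for each $x\in B(e,r_0)$ the competitor $S_x=\varphi_{x\#}V-\psi_{x\#}([0,1]\times T)$ with $\bdry S_x=T$, and integrates the admissibility inequality $\int f\,d\|S_x\|\geq 1$ over $x$. You instead dualize: you encode $f$ as the weighted filling-volume cochain $\omega(T')=\inf\{\int f\,d\|S\|:\bdry S=T'\}$, observe $\omega(T)\geq 1$ and that $f$ is an upper gradient, and then apply the already-proved averaging estimates of Propositions~\ref{proposition:avg-omega-estimate-curr}(ii) and \ref{proposition:tech-avg-diff-pt-omega-estimate}(ii) at the fixed radius $r_0$. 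This is a legitimate shortcut: once one unpacks those two propositions one essentially recovers the paper's direct computation, so the routes are mathematically equivalent, but your version makes it explicit that the capacity lower bound is a corollary of the H\"older machinery evaluated at a single scale, which is a clean conceptual point the paper does not spell out.

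Two small remarks. First, your closing parenthetical about $p=1$ is unnecessary: the hypothesis that $T$ satisfy \eqref{eq:growth-mass-T} already forces $p>1$ (the exponent $1/(p-1)$ is meaningless at $p=1$), and this is also what the paper's proof implicitly assumes when it invokes Proposition~\ref{proposition:growth-integration-variant}; moreover the proposed workaround of switching to an auxiliary $p'>1$ is not quite sound as stated, since \eqref{eq:growth-mass-T} is a $p$-dependent condition and a $T$ satisfying it for $p$ need not satisfy it for $p'$. Second, when you invoke Lemma~\ref{lemma:sobolev-omega-translates} through Proposition~\ref{proposition:tech-avg-diff-pt-omega-estimate}(ii) you are implicitly using the remark after that lemma (that a cochain defined only on $\mathcal{C}^0$ suffices when $\fillvol(T,\mathcal{C}_{m+1})<\infty$); it would be worth citing that remark explicitly, since your $\omega$ is defined only on cycles. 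Neither point affects the correctness of the argument.
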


\begin{proof}
 Clearly, there exist $C\geq 1$ and $r_0>0$ such $B(e,r_0)$ admits a $(C,n,C, Cr_0)$-controlled family of curves. We may assume that $Cr_0<r_1$. Let $H:[0,1]\times B(e, r_0)\to G$ be the Lipschitz map defining the controlled family of curves. For $x\in B(e,r_0)$ define a Lipschitz map $\psi_x: [0,1]\times G\to G$ by $\psi_x(t,z):= zH_t(x)$. 
 Let $f:G\to[0,\infty]$ be a Borel measurable function such that
 \begin{equation*}
  \int_Gf(z)d\|S\|(z) \geq 1
 \end{equation*}
 for every $S$ with $\partial S = T$. Fix $V$ with $\partial V = T$. Such $V$ exists by assumption. For $x\in B(e, r_0)$ define 
 $$S_x:= \varphi_{x\#} V - \psi_{x\#}([0,1]\times T)$$ and note that $\partial S_x = T$ and, by Lemma~\ref{lemma:mass-estimate-push-cone},
 \begin{equation*}
   \|S_x\| \leq \|\varphi_{x\#}V\| + \|\psi_{x\#}([0,1]\times T)\|
    \leq D \varphi_{x\#}\|V\| + D\psi_{x\#}(\lm^1\times \|T\|),
  \end{equation*}
 where $D= \max\left\{\bar{\tau}(Cr_0)^{m+1}, (m+1)Cr_0\bar{\tau}(Cr_0)^m\right\}$. Hence, by Propositions~\ref{proposition:elem-upper-G} and \ref{proposition:growth-integration-variant}, we obtain
  \begin{equation*}
   \begin{split}
   \haus^n(B(e,r_0)) &\leq \int_{B(e,r_0)}\int_Gf(z) d\|S_x\|(z)d\haus^n(x)\\
    &\leq D \int_{B(e,r_0)}\int_Gf(zx) d\|V\|(z)d\haus^n(x) \\
    &\quad + D \int_{B(e,r_0)}\int_0^1\int_Gf(zH_t(x)) d\|T\|(z)dt\,d\haus^n(x)\\
    &\leq D \|f\|_p \haus^n(B(e,r_0))^{\frac{p-1}{p}}\left[\mass(V) + \frac{2^{\frac{\alpha}{p}}C^{2+\frac{\alpha}{p}}A^{\frac{1}{p}}p}{p-n+\alpha}r_0^{\frac{\alpha}{p}}\mass(T)^{\frac{p-1}{p}}\right].
   \end{split}
  \end{equation*}
 This shows that $\|f\|_p$ is bounded away from $0$ and since $f$ was arbitrary we find that the capacity is also bounded away from $0$. This concludes the proof.
\end{proof}



\begin{proposition}
\label{examplezerocap}
Given $n \geq 2$ and $0 \leq \alpha \leq m \leq n-1$, and $1 \leq p< n- \alpha$, there exist $A>0$  and a non-zero current $T \in \AKic_m^0(\R^n)$ such that 
\begin{equation}\label{eq:upper-bound-growth-examplezerocap}
\|T\|(B(x,r))\leq Ar^\alpha
\end{equation}
for every $x\in\R^n$ and every $r\geq 0$ and such that
\begin{equation}
\label{kilpel}
\capa_p(\{T\}, \AKic_{m+1}(\R^n))=0.
\end{equation}
\end{proposition}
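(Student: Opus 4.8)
The plan is to first strip away the easy ranges of $p$ and then build $T$ by hand in the remaining range. Since the growth condition \eqref{eq:upper-bound-growth-examplezerocap} with $\alpha=0$ merely says that $\|T\|$ is a finite measure, the case $\alpha=0$ reduces to any $\alpha'\in(0,\min\{m,n-p\})$, so I may assume $\alpha>0$. If moreover $p\le n-m$, I would take $T=F_\#\bdry\Lbrack 1_{B^{m+1}}\Rbrack$ for the isometric inclusion $F:\R^{m+1}\hookrightarrow\R^n$: its mass measure is comparable to $\haus^m$ on an $m$-sphere, so \eqref{eq:upper-bound-growth-examplezerocap} holds (one has $\haus^m(\Sigma\cap B(x,r))\lesssim r^m\le r^{\alpha}\,r^{m-\alpha}$ for $r$ bounded, and the measure is eventually constant), while $\haus^{n-p}(\spt T)<\infty$ because $n-p\ge m$, so $\capa_p(\{T\},\AKic_{m+1}(\R^n))=0$ by Theorem~\ref{thm:hausdim-capacity-currents}. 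This leaves the genuinely hard range $0<\alpha<m$ and $n-m<p<n-\alpha$; here $\spt T$ is necessarily at least $m$-dimensional, so $n-p<m$, and neither Theorem~\ref{thm:hausdim-capacity-currents} nor Proposition~\ref{prop:capacity-intcurr-vs-modulus-curves} is available.

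For this range the current must have unbounded local multiplicity concentrated along a thin set, and the model is the \emph{codimension-one} case $n=m+1$. Fix a compact Ahlfors $\alpha$-regular set $E\subset\R^n$ (possible since $\alpha<m=n-1$), a parameter $b>0$, multiplicities $Q_k:=\lfloor 2^{bk(n-1-\alpha)}k^{-2}\rfloor$, and, for a.e.\ choice of the radii so that the level sets are rectifiable, the compactly supported $BV$, $\Z$-valued function $h:=\sum_k Q_k\mathbf 1_{U_k}$ with $U_k:=\{\dist(\cdot,E)<2^{-bk}\}$; put $T:=\bdry\Lbrack h\Rbrack$. Since the $U_k$ are nested one gets $\int|h|\asymp\sum_k 2^{-bk}k^{-2}<\infty$ and $|Dh|=\sum_k Q_k\,\haus^{n-1}\rstr\bdry^*U_k$, whence $|Dh|(\R^n)\asymp\sum_k k^{-2}<\infty$; thus $\Lbrack h\Rbrack$ is an integral $n$-current and $T\in\AKic^0_m(\R^n)$, $T\neq 0$. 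The growth bound \eqref{eq:upper-bound-growth-examplezerocap} is the inequality $\|T\|(B(x,r))=|Dh|(B(x,r))\lesssim r^{\alpha}$, which I would prove by splitting the sum over $k$ according to whether $2^{-bk}$ is $\gtrsim r$ or $\lesssim r$, using that $\bdry^*U_k$ is $\alpha$-regular at scales $\ge 2^{-bk}$ and $(n-1)$-flat at smaller ones together with $\lm^n(U_t)\asymp t^{n-\alpha}$; the $k^{-2}$ factor makes the geometric series in $k$ converge. Finally, because $n=m+1$, every filling $S\in\AKic_{m+1}(\R^n)=\AKic_n(\R^n)$ of $T$ differs from $\Lbrack h\Rbrack$ by an integral $n$-cycle of finite mass, hence $S=\Lbrack h\Rbrack$: the filling is \emph{unique}. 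Therefore $\capa_p(\{T\})=\inf\{\|f\|_p^p:\int f|h|\,d\lm^n\ge 1\}$, which by Hölder duality equals $\|h\|_{p'}^{-p}$ (and is $0$ for $p=1$, a case anyway covered above). A direct computation gives $\|h\|_{p'}^{p'}\asymp\sum_k Q_k^{p'}\lm^n(U_k)\asymp\sum_k 2^{bk[(n-1-\alpha)p'-(n-\alpha)]}k^{-2p'}$, which diverges precisely when $p'>\tfrac{n-\alpha}{n-1-\alpha}$, i.e.\ when $p<n-\alpha$. This settles the Proposition when $n=m+1$.

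For $m<n-1$ I would run the same construction inside a coordinate $(m+1)$-plane $V\cong\R^{m+1}$, with $E\subset V$ $\alpha$-regular, $Q_k\asymp 2^{bk(m-\alpha)}k^{-2}$ and tubes taken in $V$, obtaining $T_0:=\bdry\Lbrack h\Rbrack\in\AKic^0_m(V)$ with $|Dh|(B_V(x,r))\lesssim r^{\alpha}$, and set $T:=\iota_\#T_0$. The growth bound transfers at once since $\|T\|=\iota_\#\|T_0\|$. For the capacity one observes that the orthogonal projection $\pi_V:\R^n\to V$ pins every filling: $\pi_{V\#}S$ is a top-dimensional integral current in $V$ with boundary $T_0$, hence equals the unique $\Lbrack h\Rbrack$, so $\pi_{V\#}\|S\|\ge|h|\,\lm^{m+1}\rstr V$. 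Combining this with a slab cut-off $f(z)=f_0(\pi_V z)\,\eta\!\bigl(\dist(z,V)\bigr)$ (whose $L^p(\R^n)$-norm stays finite because $p>n-m>n-m-1$ leaves enough room for the decay of $\eta$), a coarea/pigeonhole choice of a normal radius $R$ on which $\langle S,\dist(\cdot,V),R\rangle$ is small so that $\pi_{V\#}\bigl(S\rstr\{\dist<R\}\bigr)$ still agrees with $\Lbrack h\Rbrack$ up to bounded error, and the choice of $f_0$ dual to $h$, one obtains $\int f\,d\|S\|=\infty$ for all fillings. However, this naive version only reaches $p<m+1-\alpha<n-\alpha$, because $h$ lives in $V$ and $h\in L^{p'}(V)$ once $p\ge m+1-\alpha$; closing the gap $[m+1-\alpha,\,n-\alpha)$ requires a current genuinely spread over all $n$ directions rather than over a coordinate $(m+1)$-plane.

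That last point is the \textbf{main obstacle}. Already obtaining the \emph{sharp} exponent $n-\alpha$ rules out the obvious candidates — a single $m$-surface, or concentric ``fat spheres'' over an $\alpha$-regular set — which only reach thresholds such as $n/(1+\alpha)$, strictly below $n-\alpha$ for $\alpha<m$; the $\alpha$-regular-tube construction above is what makes the duality computation land exactly at $n-\alpha$. And in codimension $\ge 2$ one must simultaneously (i) keep $f\in L^p(\R^n)$, which forbids $f$ from being large on the whole cylinders $\pi_V^{-1}\bigl(\{|h|\text{ large}\}\bigr)$, and (ii) exclude fillings that dodge $f$ by carrying their mass far from $\spt T$ in the normal directions. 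Arranging a single $L^p(\R^n)$ function (or a sequence with vanishing norm) that charges \emph{every} filling infinitely, for all of $1\le p<n-\alpha$, is where the real work lies; I expect it needs a construction whose ``thin set'' already occupies a full $(n-1-\alpha)$-parameter family of $(m+1)$-planes' worth of directions, so that the projection identity can be applied in enough directions at once to forbid both escape routes.
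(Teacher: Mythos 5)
Your proof is explicitly incomplete: you acknowledge in the last two paragraphs that your $BV$-tube construction settles only the codimension-one case $n=m+1$ and that for $m<n-1$ you have no construction in the range $p\in[m+1-\alpha,\,n-\alpha)$, which you call the ``main obstacle.'' That obstacle is precisely what the paper's construction sidesteps, by a mechanism quite different from what you are reaching for. Rather than fattening an $\alpha$-regular set into a thin rectifiable tube and arguing by $L^p$--$L^{p'}$ duality for the unique top-dimensional filling, the paper keeps $\spt T$ smooth and $m$-rectifiable: $T$ is a countable union of \emph{disjoint} $m$-spheres $F^{j,k}_\#T_0$ of radius $r_j=2^{-j}$, placed at separated lattice points $x_j^k=(j,k,0,\dots,0)$, each carried with a large integer multiplicity $N_j\approx r_j^{\alpha-m}$; the growth bound \eqref{eq:upper-bound-growth-examplezerocap} then comes from $N_j r_j^m\lesssim r_j^\alpha$ together with disjointness, and the count $M_j\approx j^{-2}r_j^{-\alpha}$ of spheres at scale $j$ ensures finite mass. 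The zero-capacity test function is simply $g=\sum_j M_j^{-1}N_j^{-1}r_j^{-(m+1)}\sum_k \mathbf 1_{B(x_j^k,2r_j)}$: the local filling-volume lower bound $\|S\|(B(x_j^k,2r_j))\gtrsim N_j r_j^{m+1}$ gives $\int g\,d\|S\|=\infty$ for every filling $S$, while $\int g^p\,d\lm^n\lesssim\sum_j j^{-2(1-p)}r_j^{\,n-p-\alpha}<\infty$ exactly when $p<n-\alpha$. So the sharp threshold comes out of an elementary multiscale computation, not out of a duality argument.

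In short, where you tried to realize $\alpha<m$ geometrically (making the support of $T$ effectively ``$\alpha$-dimensional'' via tubes around an Ahlfors-regular Cantor set), the paper realizes it \emph{algebraically}, via unbounded integer multiplicities on a nice $m$-rectifiable set. This is why the codimension of $\spt T$ inside $\R^n$ plays no role: the test function $g$ is supported on tiny balls of total finite volume, and the obstruction to fillings is local (each $N_j$-fold small sphere must be filled inside its own ball), so the ``escape into normal directions'' that you worried about simply cannot help. Your codimension-one construction appears to be correct and gives an interesting alternative realization of the sharp exponent there, but as it stands the proposal does not prove the proposition for $m<n-1$ and $p\in(n-m,\,n-\alpha)$, which is a genuine gap.
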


Note that if $p>1$ then $T$ in Proposition~\ref{examplezerocap}, in particular, satisfies \eqref{eq:growth-mass-T} for every $r\geq 0$.

\begin{proof}
If $m=0$ then it suffices to choose $T=\Lbrack x_1\Rbrack - \Lbrack x_0\Rbrack$ for arbitrary $x_0,x_1\in\R^n$ with $x_1\not=x_0$. Indeed, for such $T$ it follows from Theorem~\ref{thm:hausdim-capacity-currents} and the remark after the theorem that $\capa_p(\{T\}, \AKic_1(\R^n))=0$.

If $m \geq 1$, we may assume that $\alpha>0$. Fix $m$, $n$, and $\alpha$ and $p$ as above. Moreover, let $r_j=2^{-j}$, $j \in \mathbb{N}$. Let $\varphi: \R^{m+1}\to \R^n$ be defined by $\varphi(x_1,\dots, x_{m+1}):= (x_1,\dots, x_{m+1}, 0,\dots, 0)$. Denote by $B^{m+1}$ the unit ball in $\R^{m+1}$ and set $T_0=\partial \varphi_{\#}\Lbrack 1_{B^{m+1}}\Rbrack$. Let $M_j$ and $N_j$ be integers whose precise values will be determined later. Finally, choose points $x_j^k=(j,k,0,\ldots,0)\in\R^n$ for $k =1, \ldots, M_j$. Note that the balls $B(x_j^k,2r_j)$ are then pairwise disjoint for every $j$ and $k$. We define 
$$
T=\sum_{j=1}^{\infty} \sum_{k=1}^{M_j} N_jF_{\#}^{j,k}T_0,  
$$ 
where $F^{j,k}(x)=x_j^k+2^{-j} x$. We now choose $M_j$ and $N_j$ so that $T$ has finite mass (which implies that $T \in \AKic_m^0(\R^n)$) and such that $T$ satisfies the growth condition $\|T\|(B(x,r))\leq Ar^{\alpha}$ for a suitable constant $A$. 
For this, we first choose $N_j$ to be the largest integer smaller than or equal to $r_j^{\alpha -m}$. By disjointness of the balls $B(x_j^k,2r_j)$, we have
$$
\|T\|(B(x_j^k,r_j)) \leq m\omega_m N_j r_j^m \leq m \omega_m r_j^{\alpha}
$$
and it thus follows that 
$$
\|T\|(B(x,r)) \leq C r^{\alpha} 
$$
for every $r>0$ and $x \in \R^n$. 
Next, let $M_j$ be the largest integer smaller than or equal to $j^{-2}r_j^{-\alpha}$. 
Then 
$$
\mass(T) \leq m\omega_m \sum_{j=1}^{\infty} M_j r_j^{\alpha} \leq m\omega_m \sum_{j=1}^{\infty} j^{-2} < \infty. 
$$

We now show that \eqref{kilpel} holds. Notice that, if $R \in \AKic_{m+1}(\R^n)$ is such that 
$\partial R = F^{j,k}_{\#}T_0$ for some $j$ and $k$, then 
$$
\|R\|(B(x_j^k,2r_j)) \geq C r_j^{m+1}. 
$$
If follows that, if $S \in \AKic_{m+1}(\R^n)$ is such that $\partial S = T$, and if $g$ is defined as 
$$
g(x)=\sum_{j=1}^{\infty} M_j^{-1}N_j^{-1} r_j^{-m-1} \sum_{k=1}^{M_j} 1_{B(x_j^k,2r_j)}(x), 
$$ 
then 
\begin{equation}
\label{testfcn}
\int_{\R^n} g \, d \|S\| = \infty. 
\end{equation}
On the other hand,
\begin{eqnarray*}
\int_{\R^n} g(x)^p \, dx \leq C \sum_{j=1}^{\infty} M_j^{1-p} N_j^{-p} r_j^{n-p(m+1)} 
= C \sum_{j=1}^{\infty} j^{-2(1-p)} r_j^{n-p-\alpha}. 
\end{eqnarray*}
Since $r_j=2^{-j}$ and $p < n-\alpha$, the series converges. So $g$ is $p$-integrable. Since $\epsilon g$ is a test function for the capacity for every $\epsilon >0$ by \eqref{testfcn}, we conclude that \eqref{kilpel} holds. 
\end{proof}

\begin{remark}
If $m=\alpha=n-1$, then Proposition \ref{caplowerbound} holds with $p=1$ by Proposition \ref{elementary}. On the other hand, if 
$m=\alpha \leq n-2$, then the proposition does not hold with $p=n-m$, see Example \ref{sharpthm}. For other values of $\alpha$, we do not know if Proposition \ref{caplowerbound} holds with the borderline exponent $p=n-\alpha$. 
\end{remark}


\vskip 15pt

\noindent K.R.\quad University of Jyv\"askyl\"a, Department of Mathematics and Statistics (P.O. Box 35), FI-40014 University of Jyv\"askyl\"a, Finland\\
e-mail: kai.i.rajala@jyu.fi

\vskip 5pt

\noindent S.W.\quad Universit\'e de Fribourg, Math\'ematiques, Ch. du Mus\'ee 23, 1700 Fribourg, Switzerland\\
e-mail: stefan.wenger@unifr.ch

\end{document}